\def\@tocline#1#2#3#4#5#6#7{\relax
  \ifnum #1>\c@tocdepth 
  \else
    \par \addpenalty\@secpenalty\addvspace{#2}%
    \begingroup \hyphenpenalty\@M
    \@ifempty{#4}{%
      \@tempdima\csname r@tocindent\number#1\endcsname\relax
    }{%
      \@tempdima#4\relax
    }%
    \parindent\z@ \leftskip#3\relax \advance\leftskip\@tempdima\relax
    \rightskip\@pnumwidth plus4em \parfillskip-\@pnumwidth
    #5\leavevmode\hskip-\@tempdima
      \ifcase #1
       \or\or \hskip 1em \or \hskip 2em \else \hskip 3em \fi%
      #6\nobreak\relax
    \dotfill\hbox to\@pnumwidth{\@tocpagenum{#7}}\par
    \nobreak
    \endgroup
  \fi}
\newtheorem{theorem}{Theorem}[section]
\newtheorem*{maintheorem}{Main Theorem}
\newtheorem{lemma}[theorem]{Lemma}
\newtheorem{proposition}[theorem]{Proposition}
\newtheorem{corollary}[theorem]{Corollary}
\theoremstyle{definition}
\newtheorem{definition}[theorem]{Definition}
\newtheorem{example}[theorem]{Example}
\newtheorem{remark}[theorem]{Remark}
\newtheorem{assumption}[theorem]{Assumption}
\newcommand{\R}{\mathbb R}
\newcommand{\Z}{\mathbb Z}
\newcommand{\C}{\mathbb C}
\newcommand{\So}{\mathcal{SR}}
\newcommand{\Sm}{\mathcal{SEM}}
\newcommand{\SF}{\mathbb S}
\DeclareMathOperator{\pv}{\wedge \mspace{-9.5mu}_* \ }
\newcommand{\N}{\mathbb N}
\newcommand{\HH}{\mathbb H}
\numberwithin{equation}{section}
\begin{document}

\title[Equivalence of slice semi-regular functions via Sylvester operators]{Equivalence of slice semi-regular functions via Sylvester operators}
\author[A. Altavilla]{A. Altavilla}\address{Altavilla Amedeo: Dipartimento di Matematica, Universit\`a degli Studi di Bari ``Aldo Moro'', via Edoardo Orabona, 4, 70125,
Bari, Italy.} \email{amedeo.altavilla@uniba.it}

\author[C. de Fabritiis]{C. de Fabritiis}\address{Chiara de Fabritiis: Dipartimento di Ingegneria Industriale e Scienze
Matematiche, Universit\`a Politecnica delle Marche, Via Brecce Bianche, 60131,
Ancona, Italia} \email{fabritiis@dipmat.univpm.it}
\thanks{The authors were partially supported by GNSAGA of INdAM; they acknowledge with pleasure the support of FBK-Cirm, Trento, where part of this paper was written.}


\subjclass[2010]{Primary 30G35; secondary 15A24, 15A33, 15A54, 39B42, 47A56}
\keywords{Slice-regular functions, Sylvester equation, matrix representation of Sylvester operators,
$*$-product of slice-regular functions, semi-regular functions, idempotent functions}

\begin{abstract} 
The aim of this paper is to study some features of slice semi-regular functions $\Sm(\Omega)$ on a circular domain $\Omega$ contained in the skew-symmetric algebra of quaternions $\HH$ via the analysis of a family of linear operators built from left and right $*$-multiplication on  $\Sm(\Omega)$; this class of operators includes the family of Sylvester-type operators $\mathcal{S}_{f,g}$.
Our goal is achieved by a strategy based on a matrix interpretation of these operators as we show that $\Sm(\Omega)$ can be seen as a $4$-dimensional vector space on the field $\Sm_\R(\Omega)$. 
We then study the rank of $\mathcal{S}_{f,g}$ and describe its kernel and image when it is not invertible, finding meaningful differences in the cases when the rank is either $2$ or $3$. By using these results, we are able to characterize when the functions $f$ and $g$ are either equivalent under $*$-conjugation or intertwined by means of a zero divisor, thus proving a number of statements on the behaviour of slice semi-regular functions. In this way, informations about the operator obtained by linear algebra techniques  give as a significant application the solution of a problem in an area of function theory which had an remarkable development in the last decade (see~\cite{G-S-St}). 
We also provide a complete classification of idempotents and zero divisors on product domains of $\HH$. 
\end{abstract}
\maketitle
\tableofcontents

\section{Introduction}

The aim of this article is to investigate the behaviour of slice semi-regular functions defined on a circular domain $\Omega$ contained in the skew-symmetric algebra of quaternions $\HH$ via the study of a family of Sylvester-type operators, and related equations; in particular, we single out such a family in a more general class of operators which are obtained as generalizations of left and right $*$-multiplication.
One of our main motives for this analysis is the fact that these operators are of crucial importance in the investigation of the  orbit of slice (semi)-regular functions under conjugation.
In such manner, the interplay between linear algebra and operator theory gives new and unexpected results under the function theoretical viewpoint.

In the most common use, Sylvester equations are special matrices equations, introduced by Sylvester himself~\cite{sylvester},
which are used in several subjects, including similarity, commutativity, control theory and differential equation (see~\cite{howandwhy}). In the quaternionic setting, such equations were studied with different purposes: 
without claiming any completeness of references, we point out the works of Bolotnikov~\cite{bolotnikov1,bolotnikov2} and Janovsk\'a--Opfer~\cite{janovska} regarding the quaternionic matricial equation and He--Liu--Tam~\cite{he} and references therein for the multitude of employments in applied sciences. For the operatorial equation in quaternionic function spaces we mention~\cite[Chapter 4]{ACSbook} and references therein.

In our paper,
we make a large use of a detailed analysis of the Sylvester operator 
in order to understand when two functions belong to the same conjugacy class under the action of an invertible element of $\Sm(\Omega)$. The deep interlacement between the function theory in $\Sm(\Omega)$ and the techniques of linear algebra used to study the behaviour of Sylvester operators answers several open questions concerning slice semi-regular functions; in particular it gives a necessary and sufficient conditions on a function in order it is conjugated to a one-slice preserving function (see Proposition~\ref{idemconjug}).

We now give an outline of the plan of the paper.
Section~\ref{preliminary} contains definitions and preliminary material: here we recall properties of slice semi-regular functions, the definition of the $*$-product and the interpretation
given in terms of the operators $\langle .,.\rangle_{*}$ and $\pv$ defined and developed in~\cite{A-dF,A-dFAMPA}.
Moreover, following the approach originally due to Colombo, Gonzales-Cervantes and Sabadini, we prove that the family $\Sm(\Omega)$ of slice semi-regular functions
on a symmetric domain is in fact a vector space over the field $\Sm_{\R}(\Omega)$ of slice semi-regular functions 
that preserves all the complex lines in $\HH$ (see Proposition~\ref{vectorspace}). Thanks to this result we can write
any slice semi-regular function $f$ as a sum $f=f_{0}+f_{v}$, where $f_{0}\in\Sm_{\R}(\Omega)$ can be interpreted as the ``real part'' of $f$ and $f_{v}$ as the ``vector part'' of $f$.
Afterwards we deal with idempotents
for the $*$-product: in particular we prove (see Proposition~\ref{characterizationidempotent}), that any semi-regular idempotent $f\in\Sm(\Omega)$ is regular and that $f$ is an idempotent if and only if it is a zero divisor whose ``real part" $f_0$ is identically equal to $\frac12$. This characterization allows us to describe all zero divisors in terms of idempotents in Propositions~\ref{propzerodividem}.

In the next section 
we define the class of $\Sm_\R(\Omega)$-linear operators $\mathcal{L}_{\mathcal{F},\mathcal{G}}:\Sm(\Omega)\to\Sm(\Omega)$ as 
$$\mathcal{L}_{\mathcal{F},\mathcal{G}}(\chi):=f_{[1]}*\chi*g_{[1]}+\cdots f_{[N]}*\chi*g_{[N]},$$
for any $N$-tuples $\mathcal{F}=(f_{[1]}\dots,f_{[N]}),\mathcal{G}=(g_{[1]},\dots,g_{[N]})\subset(\Sm(\Omega)\setminus\{0\})^N$.
We then study the initial case $N=1$, that is the multiplicative operators given by $\mathcal{L}_{f,g}(\chi)=f*\chi*g$; in particular we characterize when $\mathcal{L}_{f,g}$ is an isomorphism
(see Proposition~\ref{isomorphism});  in Theorem~\ref{singularcase} and Proposition~\ref{kernelfg} we describe the image and the kernel of this operator when it is not an isomorphism.

In Section~\ref{representation} we present a matrix interpretation of the linear operator $\mathcal{L}_{\mathcal{F},\mathcal{G}}$ via coordinates, being thus able to find necessary and sufficient conditions on $\mathcal{F},\mathcal{G}$ in order that $\mathcal{L}_{\mathcal{F},\mathcal{G}}$ is an isomorphism.
We later turn to the study of the Sylvester operators, which correspond to the case $\mathcal{F}=(f,1)$ and $\mathcal{G}=(1,g)$, thus giving $\mathcal{S}_{f,g}(\chi)=f*\chi+\chi*g$. 
After defining the equivalence relation $\simeq$ given by 
$f\simeq g$ when there exists an invertible $h$ such that $g=h^{-*}*f*h$, we prove that $\mathcal{S}_{f,g}$ is not an isomorphism if and only if either $f\simeq -g$ or there exist a zero divisor $\sigma$ such that $f*\sigma+\sigma*g=0$.

Section~\ref{ranksylv} contains a detailed analysis of the rank of the Sylvester operator according to the features of $f$ and $g$. 
We prove that the rank of $\mathcal{S}_{f,g}$ is always strictly greater than $1$ and show that it is not an isomorphism if and only if 
$$
(f_0+g_0)^2[(f_0+g_0)^2+2(f_v^s+g_v^s)]+(f_v^s-g_v^s)^2\equiv 0,
$$
where $f_{v}^{s}$ and $g_{v}^{s}$ denote the symmetrized functions of $f_{v}$ and $g_{v}$.
In particular we prove that
\begin{proposition}
If $\Omega$ is a slice domain, then the following conditions are equivalent
\begin{itemize}
\item $f\simeq g$, 
\item $f_0=g_0$ and $f^s=g^s$,
\item $\mathcal{S}_{f,-g}$ is not an isomorphism.
\end{itemize}
\end{proposition}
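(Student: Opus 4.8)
The plan is to use the third condition as a hub and establish the two equivalences $(1)\Leftrightarrow(3)$ and $(2)\Leftrightarrow(3)$ separately. The slice-domain hypothesis will be used in exactly two ways: first, that $\Sm(\Omega)$ has no zero divisors; second, that a slice-preserving semi-regular function vanishing on $\Omega\cap\R$ vanishes on all of $\Omega$. Both rest on the following elementary remark, which I would record at the outset: if $u=u_0+u_v$ with $u_v=u_1i+u_2j+u_3k$ and $u_0,u_1,u_2,u_3\in\Sm_\R(\Omega)$, then $u_v^s=u_1^2+u_2^2+u_3^2$ and $u^s=u_0^2+u_1^2+u_2^2+u_3^2$; along $\Omega\cap\R$ these are sums of squares of real-valued functions, hence nonnegative, and they vanish identically there only if $u_v$ (respectively $u$) does.

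For $(1)\Leftrightarrow(3)$ I would apply the characterization of non-invertibility proved earlier, with $g$ replaced by $-g$: the operator $\mathcal S_{f,-g}$ fails to be an isomorphism precisely when $f\simeq g$ or there is a zero divisor $\sigma$ with $f*\sigma=\sigma*g$. Now on a slice domain there are no zero divisors: if $\sigma*\tau=0$ with $\sigma,\tau\neq0$ then $\sigma^s\tau^s=(\sigma*\tau)^s=0$ in the field $\Sm_\R(\Omega)$, forcing $\sigma^s\equiv0$ or $\tau^s\equiv0$, which by the remark above is impossible for a nonzero function. Hence the zero-divisor alternative is vacuous and $(3)$ reduces to $f\simeq g$, which is $(1)$.

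For $(2)\Leftrightarrow(3)$ I would first rewrite $(2)$ by means of $f^s=f_0^2+f_v^s$: it is equivalent to $f_0=g_0$ together with $f_v^s=g_v^s$. Feeding $-g$ into the rank criterion of the present section and using $(-g)_0=-g_0$ and $(-g)_v^s=g_v^s$ (the symmetrization is sign-invariant by the remark), non-invertibility of $\mathcal S_{f,-g}$ becomes the identity
\[
(f_0-g_0)^4+2(f_0-g_0)^2(f_v^s+g_v^s)+(f_v^s-g_v^s)^2\equiv0.
\]
The implication $(2)\Rightarrow(3)$ is now immediate substitution. For $(3)\Rightarrow(2)$ I would restrict this identity to $\Omega\cap\R$, where $(f_0-g_0)^4$, $2(f_0-g_0)^2(f_v^s+g_v^s)$ and $(f_v^s-g_v^s)^2$ are all nonnegative; a sum of nonnegative real-analytic functions that vanishes identically forces each summand to vanish, so $f_0=g_0$ and $f_v^s=g_v^s$ on $\Omega\cap\R$, and the identity principle propagates both equalities to $\Omega$, giving $f^s=g^s$.

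The main obstacle is the direction $(3)\Rightarrow(2)$. The displayed relation is an identity in the field $\Sm_\R(\Omega)$, whose elements are genuinely complex on each slice, so it cannot be separated into its three summands by purely algebraic means; the decisive step is to pass to the real axis — available exactly because $\Omega$ is a slice domain — where the symmetrizations become sums of squares and positivity can be invoked, and then to return to $\Omega$ by analytic continuation. The only technical care needed is to check that the poles of the semi-regular functions involved meet $\Omega\cap\R$ in a discrete set, so that both the positivity argument and the identity principle indeed apply.
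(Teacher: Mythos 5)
Your proposal is correct, but it closes the web of implications along a genuinely different path than the paper does. The shared machinery is the same: Proposition~\ref{isomorphism-sylv} applied to the pair $(f,-g)$, the determinant criterion of Proposition~\ref{propdetsyl}, and the absence of zero divisors in $\Sm(\Omega)$ when $\Omega$ is a slice domain (a fact the paper simply quotes from~\cite[Theorem 6.6]{GPSdivision}, and which you re-derive in a self-contained way from $(\sigma*\tau)^{s}=\sigma^{s}\tau^{s}$, the field structure of $\Sm_{\R}(\Omega)$, and positivity of the symmetrization on $\Omega\cap\R$). The genuine divergence is how condition (2) is reached. The paper proves the cycle $(1)\Rightarrow(2)\Rightarrow(3)\Rightarrow(1)$: the step $(1)\Rightarrow(2)$ is Lemma~\ref{necessary}, a purely algebraic computation showing that $*$-conjugation preserves the ``real part'' and the symmetrization (the key identity being $(h^{c}*g_{v}*h)^{c}=-h^{c}*g_{v}*h$), and the slice hypothesis enters only in $(3)\Rightarrow(1)$, to guarantee that a nontrivial kernel of $\mathcal{S}_{f,-g}$ contains an invertible element (Corollary~\ref{corollaryequivalence} and Remark~\ref{noproduct}). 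You instead never invoke Lemma~\ref{necessary}: you prove $(3)\Rightarrow(2)$ directly by restricting the vanishing determinant $(f_{0}-g_{0})^{4}+2(f_{0}-g_{0})^{2}(f_{v}^{s}+g_{v}^{s})+(f_{v}^{s}-g_{v}^{s})^{2}\equiv 0$ to $\Omega\cap\R$, where every summand is nonnegative, so each must vanish, and then propagating $f_{0}=g_{0}$ and $f_{v}^{s}=g_{v}^{s}$ by the identity principle (your caveat about poles meeting $\R$ in a discrete set is exactly the right technical point, and it is unproblematic). What the paper's route buys is generality: Lemma~\ref{necessary} is valid on arbitrary circular domains and is reused later for the product-domain case (Corollary~\ref{sufficiencyproduct}), whereas your positivity argument is intrinsically tied to the existence of real points. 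What your route buys is transparency: both equalities in (2) are extracted directly from the operator-theoretic condition (3) without ever computing with conjugations, and the two uses of the slice hypothesis (no zero divisors; positivity on the real axis) are made completely explicit.
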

We then show (see Proposition~\ref{ranksimeq} and Theorem~\ref{rank3}) the following characterization  of the rank of $\mathcal{S}_{f,g}$  in terms of the ``real parts'' of the functions $f$ and $g$.
\begin{proposition}
Suppose that $\mathcal{S}_{f,g}$ is not an isomorphism. If $f_0+g_0\equiv0$ the operator $\mathcal{S}_{f,g}$ has rank $2$, otherwise it has rank $3$.
\end{proposition}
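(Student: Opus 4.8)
The plan is to pass to the matrix interpretation of Section~\ref{representation} and to read off the rank from the skew-symmetric structure that appears once the ``real part'' is split off. Writing $c:=f_0+g_0\in\Sm_{\R}(\Omega)$, the operator $\mathcal{S}_{f,g}$ is represented over the field $\Sm_{\R}(\Omega)$ by a $4\times4$ matrix of the form $M=cI+A$, where $A$ encodes the two-sided $*$-multiplication by the vector parts $f_v$ and $g_v$ and is skew-symmetric for the natural symmetric pairing (the one for which left and right $*$-multiplication by the imaginary units are alternating). Since we assume $\mathcal{S}_{f,g}$ is not an isomorphism we have $\det M\equiv0$, and since its rank is already known to be strictly greater than $1$, the only thing to settle is whether the rank is $2$ or $3$, and the two cases are exactly $c\equiv0$ versus $c\not\equiv0$.

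First I would dispose of the case $f_0+g_0\equiv0$. Here $M=A$ is genuinely skew-symmetric over $\Sm_{\R}(\Omega)$, and an alternating matrix over a field of characteristic different from $2$ has even rank, being congruent to a direct sum of hyperbolic $2\times2$ blocks and a zero block. Hence the rank lies in $\{0,2,4\}$; as it is $>1$ and $<4$, it must equal $2$. This is the content of Proposition~\ref{ranksimeq}, and it is the easy half.

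For the case $f_0+g_0\not\equiv0$ I would show that the kernel is one-dimensional. The characteristic polynomial of the skew-symmetric $A$ is even, namely $\chi_A(x)=x^4+2(f_v^s+g_v^s)x^2+(f_v^s-g_v^s)^2$, and $\ker M$ is exactly the eigenspace of $A$ for the eigenvalue $-c$; the non-isomorphism hypothesis is precisely $\chi_A(-c)\equiv0$. Because $\chi_A$ is even, $-c$ and $c$ occur with equal algebraic (and geometric) multiplicities, so the algebraic multiplicity of $-c$ is at most $2$. When it equals $1$ the geometric multiplicity is $1$ and the rank is $3$ at once. The delicate point is the repeated-root situation, which forces the degenerate values of $f_v^s,g_v^s$ that make both factors of $\det M=\chi_A(-c)$ vanish; there one must prove that $A$ is not diagonalizable at $-c$, equivalently that $A^2\not\equiv c^2I$. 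I would extract this from the multiplicative origin of $A$: the mixed operator $\chi\mapsto f_v*\chi*g_v$ is trace-free, so a relation $A^2\equiv c^2I$ with $c\not\equiv0$ would force that mixed operator to be a scalar multiple of the identity whose scalar is pinned down by the trace, and one then compares this against its determinant (a square in the symmetrized functions) to reach a contradiction. Thus the geometric multiplicity of $-c$ stays equal to $1$ and the rank is $3$; this is Theorem~\ref{rank3}.

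The main obstacle I anticipate is exactly this last step: excluding a two-dimensional kernel when $f_0+g_0\not\equiv0$. No parity argument is available as in the skew-symmetric case, and the conclusion genuinely uses that $A$ arises from $*$-multiplication by $f_v$ and $g_v$ rather than merely its characteristic polynomial; indeed the argument degenerates precisely when one of $f_v,g_v$ is trivial, so care is needed to see that such one-sided configurations do not survive under the standing hypotheses. Translating the condition $A^2\equiv c^2I$ back into a statement about $f_v$ and $g_v$ and proving it incompatible with $c\not\equiv0$ is where the real work lies.
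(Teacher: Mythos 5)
Your rank-$2$ half is correct and genuinely different from the paper's proof of Proposition~\ref{ranksimeq}: you obtain rank $2$ from the general fact that an alternating matrix over a field of characteristic $\neq 2$ has even rank, combined with rank $>1$ (Proposition~\ref{propdetsyl}) and rank $<4$ (non-isomorphism), whereas the paper checks by hand that every $3\times 3$ minor of the skew-symmetric matrix $S_{f,g}$ carries a factor $f_{v}^{s}-g_{v}^{s}$, which vanishes because $\det S_{f,g}=(f_{v}^{s}-g_{v}^{s})^{2}\equiv 0$. Your argument is shorter and avoids the minor computation entirely.

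The rank-$3$ half, however, has a genuine gap, sitting exactly where you say the real work lies. A preliminary point: the equality of the \emph{geometric} multiplicities of $c$ and $-c$ does not follow from the evenness of $\chi_{A}$ (a matrix with characteristic polynomial $(x^{2}-c^{2})^{2}$ can have a Jordan block at $c$ and a semisimple pair at $-c$); what you need is skew-symmetry: $(A+cI)^{T}=-(A-cI)$ gives $\mathrm{rk}(A+cI)=\mathrm{rk}(A-cI)$, and only then does a $2$-dimensional kernel of $M=A+cI$ force $A$ to be diagonalizable with $A^{2}=c^{2}I$. More seriously, the trace-and-determinant mechanism you propose does not by itself reach a contradiction. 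Writing $A^{2}=-(f_{v}^{s}+g_{v}^{s})I+2\,\imath_{L}(f_{v})\imath_{R}(g_{v})$, the relation $A^{2}=c^{2}I$ gives $\imath_{L}(f_{v})\imath_{R}(g_{v})=\mu I$ with $2\mu=c^{2}+f_{v}^{s}+g_{v}^{s}$; the zero trace gives $\mu=0$, and the determinant identity $\det\bigl(\imath_{L}(f_{v})\imath_{R}(g_{v})\bigr)=(f_{v}^{s}g_{v}^{s})^{2}$ then yields only $f_{v}^{s}g_{v}^{s}\equiv 0$ together with $f_{v}^{s}+g_{v}^{s}\equiv -c^{2}$. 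These scalar identities are \emph{not} absurd: they are exactly what holds in Example~\ref{rank3ex2} (where $g_{v}^{s}\equiv0$ and $f_{v}^{s}=-c^{2}$), i.e.\ in genuine rank-$3$ cases, so no contradiction can be extracted from them alone. What closes the argument is the operator content of $\mu=0$: $\imath_{L}(f_{v})\imath_{R}(g_{v})=0$ means $f_{v}*\chi*g_{v}\equiv 0$ for every $\chi$, in particular $f_{v}*g_{v}\equiv 0$; since $f_{v}^{s}+g_{v}^{s}=-c^{2}\not\equiv 0$, one of $f_{v},g_{v}$ has non-vanishing symmetrization, hence is $*$-invertible, and then the other must vanish identically, contradicting Assumption~\ref{assumpfv}. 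With these two repairs your route is complete, and it remains different from the paper's proof of Theorem~\ref{rank3}, which instead supposes the cofactor matrix of $S_{f,g}$ vanishes, reads off $f_{v}\pv g_{v}\equiv 0$ from three of its entries, and contradicts the resulting $\Sm_{\R}(\Omega)$-linear dependence of $f_{v}$ and $g_{v}$ (via~\cite[Proposition 2.10]{A-dF}) with $f_{v}^{s}\equiv 0$, $g_{v}^{s}\not\equiv 0$ and $f_{v}\not\equiv 0$.
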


The succeeding section is devoted to the study of the Sylvester operators of maximal rank. In this case we are able to write explicitly   the solution of the equation $S_{f,g}(\chi)=\mathfrak{b}$ in terms of suitable functions $\lambda_L$ and $\lambda_R$ built by means of $f$ and $g$.
Section~\ref{rank2section} contains the final characterization of the equivalence relation $\simeq$: after describing the kernel of $\mathcal{S}_{f,g}$ when $f_0=-g_0$ and $f_v^s=g_v^s$, we show (see Theorem~\ref{kerthm}) that it contains invertible elements. This proves that the relation $f\simeq g$ holds if and only if $f_0=g_0$ and $f_v^s=g_v^s$, even when $\Omega$ is a product domain. We are also able to find conditions on $f$ and $g$ such that the kernel of the operator $\mathcal{S}_{f,g}$ contains zero divisors and to give a detailed picture of the image of $\mathcal{S}_{f,g}$.

Thanks to the results obtained on Sylvester operators of rank $2$, in Section~\ref{outcome} we characterize when a slice semi-regular function is equivalent to a one-slice preserving function, namely this happens if and only if $f_v^s$ has a square root. In particular this implies that all idempotents are equivalent. Last result allows us to give a different and more detailed description of the kernel of $\mathcal{L}_{f,g}$ when both $f$ and $g$ are idempotents.
Finally, Section~\ref{rank3sect} contains a detailed description of the couples of functions $f,g$ such that $\mathcal{S}_{f,g}$ has rank~$3$. 

In order to give a coincise overview of the relation which holds between the features of the couple $(f,g)$ and the behaviour of the Sylvester operator $\mathcal{S}_{f,g}$, we summarize the results
 of Sections $5-9$ in the following statement:
\begin{maintheorem}
Let $f,g\in\Sm(\Omega)\setminus\Sm_{\R}(\Omega)$. Then $\text{rk}(\mathcal{S}_{f,g})$ is always strictly greater than $1$.
Moreover we have
\begin{itemize}
\item $\text{rk}(\mathcal{S}_{f,g})=4$ $\Leftrightarrow$ $(f_{0}+g_{0})^{2}[(f_{0}+g_{0})^{2}+2(f_{v}^{s}+g_{v}^{s})]+(f_{v}^{s}-g_{v}^{s})^{2}\not\equiv 0$;
\item  $\text{rk}(\mathcal{S}_{f,g})=3$ $\Leftrightarrow$ $(f_{0}+g_{0})^{2}[(f_{0}+g_{0})^{2}+2(f_{v}^{s}+g_{v}^{s})]+(f_{v}^{s}-g_{v}^{s})^{2}\equiv 0$ and $f_{0}+g_{0}\not\equiv 0$ $\Leftrightarrow$ $\ker(\mathcal{S}_{f,g})$ contains only zero divisors (this case can occur only if $\Omega$ is a product domain);
\item $\text{rk}(\mathcal{S}_{f,g})=2$ $\Leftrightarrow$ $f\simeq -g$ $\Leftrightarrow$ $f_{0}+g_{0}\equiv0$ and $f_{v}^{s}\equiv g_{v}^{s}$
$\Leftrightarrow$ $\ker(\mathcal{S}_{f,g})$ contains at least an invertible element in $\Sm(\Omega)$. 
\end{itemize}
In last case, $\ker(\mathcal{S}_{f,g})$ contains also zero divisors if and only if $\Omega$ is a product domain
and one of the following holds
\begin{enumerate}
\item $f_{v}=g_{v}$ and $f_{v}^{s}$ has a square root;
\item $f_{v}\neq g_{v}$ and $(f_{v}-g_{v})^{s}\equiv 0$;
\item  $(f_{v}-g_{v})^{s}\not\equiv 0$ and $f_{v}^{s}$ has a square root.
\end{enumerate}
\end{maintheorem}

The authors warmly thank the anonymous referee for its accurate suggestions which improved the quality of the paper.

\section{Preliminary results}\label{preliminary}

In this section we recall some basic notion and result on slice regular and semi-regular functions and prove a couple of preliminary results.
We start by recalling some relevant subset of $\HH$ and the family of domains where we will define our functions.
In the space of quaternions we denote by $i,j,k$ the usual defining basis, so that any quaternion $q\in\HH$
can be written as $q=q_{0}+q_{1}i+q_{2}j+q_{3}k$, where $q_{\ell}\in\R$, $\ell=0,1,2,3$, and $i,j,k$ satisfy $i^{2}=j^{2}=k^{2}=-1$ and $ij=-ji=k$. If $q=q_{0}+q_{1}i+q_{2}j+q_{3}k$, then its usual quaternionic conjugate will be denoted by $q^{c}=q_{0}-(q_{1}i+q_{2}j+q_{3}k)$. The square norm of $q$ is then given by $|q|^{2}=qq^{c}$.
The set of imaginary units, i.e. the set of quaternions whose square equals $-1$, is denoted by $\SF$:
$$
\SF:=\{q\in\HH\,|\,q^{2}=-1\}.
$$
For any $q=q_{0}+q_{1}i+q_{2}j+q_{3}k\in\HH$, we define its vector part as $q_{v}=(q-q^{c})/2$, hence $q=q_{0}+q_{v}$.
Moreover, if $q_{v}\neq0$, we can also write $q=q_{0}+|q_{v}|\frac{q_{v}}{|q_{v}|}$ and $\left(\frac{q_{v}}{|q_{v}|}\right)^{2}=-1$.
Thus, for any $q\in\HH$, we have $q=x+Iy$, where $I\in\SF$, $x=q_{0},y=|q_{v}|\in\R$.
It is then clear that the space of quaternions can be unfolded as $\HH=\cup_{I\in\SF}\C_{I}$, where
$$
\C_{I}:=\mbox{Span}_{\R}(1,I)=\{x+Iy\,|\,x,y\in\R\}.
$$
Given $q=x+Iy\in\HH$, we set $\SF_{q}:=\{x+Jy\,|\,J\in\SF\}$. 
\begin{definition}
We say that a domain $\Omega\subset \HH$ is
\textit{circular}, if, for any $q=x+Iy\in\Omega$, we have that $\SF_{q}\subset\Omega$. If $\Omega \cap\R\neq \emptyset$,
a circular domain $\Omega$ is called a \textit{slice domain}, otherwise it is called
a \textit{product domain}.
\end{definition}
For any circular set $\Omega\subset \HH$ and $I\in\SF$, we write $\Omega_{I}=\Omega\cap\C_{I}$ and $\Omega_{I}^{+}=\Omega\cap\C_{I}^{+}$,
where $\C_{I}^{+}:=\{x+Iy\,|\,x\in\R,y>0\}$.
A subset of $\Omega$ of the form $\Omega_{I}$ (respectively $\Omega_{I}^{+}$) will be called a \textit{slice} (respectively a \textit{semi-slice}) of $\Omega$.
Notice that, if $\Omega$ is a product domain, then, for any $I\in\SF$, we have $\Omega=\Omega_{I}^{+}\times \SF$.

We have now set up all the notation we need to recall the definition of regularity (for an extensive approach to the subject of slice regular functions see~\cite{C-S-St-1,C-S-St-2,G-S-St}).
\begin{definition}
Let $\Omega\subset\HH$ be a circular domain. A function $f:\Omega\to\HH$ is said to be \textit{slice regular} if all its restrictions
$f_{I}=f|_{\Omega_{I}}$ are real differentiable and holomorphic, i.e., for any $I\in\SF$, it holds
$$
\frac{1}{2}\left(\frac{\partial}{\partial x}+I\frac{\partial}{\partial y}\right)f_{I}(x+Iy)\equiv 0.
$$
The family of slice regular functions over a fixed circular domain $\Omega$ will be denoted by $\So(\Omega)$.
\end{definition}
Examples of slice regular functions are given by polynomials with quaternionic coefficients on the right. 
Among the several properties that have been proved for slice regular functions we will make use of the so-called
\textit{Identity Principle}~\cite{A-CVEE, G-S-St, stoppatopoles} stating that if a regular function $f$ equals $0$ on a set containing two accumulation points living in two different semi-slices then $f\equiv 0$. In particular, if $f$ vanishes identically on an open set, then
it vanishes everywhere.

It is well known that pointwise product does not preserve slice regularity. This issue can be solved
by working with the $*$-product which generalizes the usual product of polynomials on a ring. Given
$f,g\in\So(\Omega)$, we define $f*g\in\So(\Omega)$ as
$$
(f*g)(q)=\begin{cases}
0,&\mbox{if } f(q)=0,\\
f(q)g(f(q)^{-1}qf(q)),&\mbox{otherwise}.
\end{cases}
$$
In general, the $*$-product is not commutative, however, if $f$ and $g$ are such that there exists $I\in\SF$ for which
$f(\Omega_{I})\subset\C_{I}$ and $g(\Omega_{I})\subset \C_{I}$, then $f*g=g*f$. Moreover, if $f$ is such that 
for any $I\in\SF$ $f(\Omega_{I})\subset C_{I}$, then $f*g=g*f=fg$, for any $g\in\So(\Omega)$.
The previous properties characterize two remarkable sets of slice regular functions. 
\begin{definition}
A function $f\in\So(\Omega)$, such that
there exists $I\in\SF$ for which $f(\Omega_{I})\subset \C_{I}$ is said to be \textit{one slice preserving} or $\C_{I}$-\textit{preserving};
the set of $\C_{I}$-preserving regular functions is denoted by $\So_{I}(\Omega)$. A function $f\in\So(\Omega)$ such that
$f(\Omega_{I})\subset \C_{I}$, for any $I\in\SF$,  is said to be \textit{slice preserving}; the set of slice preserving regular functions is denoted by $\So_{\R}(\Omega)$. 
\end{definition}

A special regular function that will be widely used next is presented in the following definition.
\begin{definition}\label{mapJ}
 We define the slice regular function $\mathcal{J}:\HH\setminus\R\to\SF$ as $\mathcal{J}(q)=\frac{q_{v}}{|q_{v}|}$,
 for all $q\in\HH\setminus\R$.
\end{definition}

It is easily seen that $\mathcal{J}$ is slice preserving and slice constant in the sense of~\cite[Definition 13]{A-CVEE}.
Moreover, notice that $\mathcal{J}^{*2}=\mathcal{J}^{2}=-1$.

\begin{remark}
The function $\mathcal{J}$ given in Definition~\ref{mapJ} can be interpreted in the sense of stem functions (see~\cite{G-P}) as follows: let us consider the stem function $\mathsf{J}:\C\setminus\R\to\HH_{\C}$ 
$$
\mathsf{J}(z):=\begin{cases}
\imath,\qquad\mbox{if }z\in\C^{+}\\
-\imath,\quad\,\,\mbox{if }z\in\C^{-};
\end{cases}
$$
then $\mathsf{J}$ induces the slice regular function $\mathcal{J}=\mathcal{I}(\mathsf{J})$.
\end{remark}

\subsection{$\So$ as a 4-rank free module over $\So_{\R}$}

Complete $1$ to a basis $(1,I,J,K)$ of $\HH$.
We recall from \cite[Proposition 3.12]{C-GC-S} and~\cite[Lemma 6.11]{G-M-P}, that any slice regular function
$f\in\So(\Omega)$ can be written in a unique way as a sum $f=f_{0}+f_{1}I+f_{2}J+f_{3}K$, where $f_{\ell}\in\So_{\R}(\Omega)$, and $\ell=0,1,2,3$. In particular $\So(\Omega)$ is a 4-rank free module on $\So_{\R}(\Omega)$. 
Given $f\in\So(\Omega)$, by means of the previous formalism, it is possible to write the \textit{regular conjugate} $f^{c}$ and the \textit{symmetrized function} $f^{s}$ (see~\cite[Definition 1.33]{G-S-St}), as
\begin{equation}\label{regconj}
f^{c}=f_{0}+f_{1}I^{c}+f_{2}J^{c}+f_{3}K^{c},\qquad f^{s}=f*f^{c}.
\end{equation}
We assume henceforth $(1,I,J,K)$ to be an orthonormal basis. The previous formulas simplify as explained in~\cite[Remark 2.2]{A-dF} as
$$
f^{c}=f_{0}-(f_{1}I+f_{2}J+f_{3}K),\qquad f^{s}=f_{0}^{2}+f_{1}^{2}+f_{2}^{2}+f_{3}^{2}.
$$
A further  consequence of this result is a more intuitive
representation of the $*$-product, similar to the usual quaternionic product in its ``scalar-vector'' form. First of all, given $f\in\So(\Omega)$, notice that $f_{0}=(f+f^{c})/2$ and $f_{v}=f-f_{0}$ (in particular $f_{0}\equiv 0$ holds if and only if $f\equiv-f^{c}$). 
For any regular function $f$ we will sometimes informally call $f_{0}$ as its ``real part'' and $f_{v}$ as its ``vector part'',
even if $f_{0}$ and $f_{v}$ are quaternionic valued and not real or pure-imaginary valued functions. 
If $g=g_{0}+g_{v}$ is another element of $\So(\Omega)$, we have~\cite[Proposition 2.7]{A-dF}
\begin{equation}\label{formulaproduct}
f*g=f_{0}g_{0}-\langle f_{v},g_{v}\rangle_{*}+f_{0}g_{v}+g_{0}f_{v}+f_{v}\pv g_{v},
\end{equation}
where $\langle .,.\rangle_{*}$ and $\pv$ are defined as follows
\begin{equation}\label{scalarvector}
\langle f,g\rangle_{*}(q)=(f*g^{c})_{0}(q),\qquad (f\pv g)(q)=(f_{v}\pv g_{v})(q)=\frac{(f*g)(q)-(g*f)(q)}{2}.
\end{equation}

The following remark can be interpreted as a non degeneracy result of the 
``scalar product'' $\langle \cdot,\cdot\rangle_{*}$ given in formula~\eqref{scalarvector}.

\begin{remark}\label{rkdelta}
Notice that $(f\delta)_{0}\equiv0$ for all $\delta\in\HH$ with $|\delta|=1$ if and only if $f\equiv 0$. Indeed
if we choose an orthonormal basis $\{1,I,J,K\}$ of $\HH$ and write $f=f_{0}+f_{1}I+f_{2}J+f_{3}K$, 
we have 
$$ 
(f\cdot 1)_{0}\equiv f_{0},\quad (f\cdot i)_{0}\equiv -f_{1},\quad (f\cdot j)_{0}\equiv -f_{2},\quad (f\cdot k)_{0}\equiv -f_{3},
$$
and hence $f\equiv 0$.
\end{remark}

The representation of the $*$-product give in formula~\eqref{formulaproduct} makes possible  to prove the following result which will be useful in some of the computations to come.

%
\begin{lemma}
Let $f$ and $g$ be regular functions defined on the same domain $\Omega$. Then we have 
$$
(f+g)^{s}=f^{s}+g^{s}+2\langle f,g\rangle_{*}.
$$
\end{lemma}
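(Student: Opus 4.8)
The plan is to compute $(f+g)^s$ directly from the definition $(f+g)^s=(f+g)*(f+g)^c$ recorded in~\eqref{regconj}, and to reduce everything to two structural properties of the regular conjugation: its additivity and its behaviour as an anti-homomorphism for the $*$-product.

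First I would note that the regular conjugate is additive, $(f+g)^c=f^c+g^c$; this is immediate from the coordinate expression $f^c=f_0-(f_1 I+f_2 J+f_3 K)$, since conjugation simply changes the sign of the vector part and the decomposition $f=f_0+f_1I+f_2J+f_3K$ over $\So_{\R}(\Omega)$ is additive. Combining this with the fact that the $*$-product is $\R$-bilinear (indeed $\So_{\R}(\Omega)$-bilinear), I would expand
$$
(f+g)^s=(f+g)*(f^c+g^c)=f*f^c+g*g^c+\left(f*g^c+g*f^c\right)=f^s+g^s+\left(f*g^c+g*f^c\right).
$$

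The remaining task, which is the crux of the argument, is to identify the cross term $f*g^c+g*f^c$ with $2\langle f,g\rangle_*$. For this I would invoke the anti-homomorphism property $(a*b)^c=b^c*a^c$ of the regular conjugate, which applied to $a=f$, $b=g^c$ gives $(f*g^c)^c=(g^c)^c*f^c=g*f^c$. Hence $f*g^c+g*f^c=f*g^c+(f*g^c)^c$, and using the elementary identity $h+h^c=2h_0$ (recorded just before~\eqref{formulaproduct} in the form $h_0=(h+h^c)/2$) together with the definition $\langle f,g\rangle_*=(f*g^c)_0$ from~\eqref{scalarvector}, I would conclude $f*g^c+g*f^c=2(f*g^c)_0=2\langle f,g\rangle_*$, which finishes the proof.

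The only delicate point is this cross term, and within it the appeal to the anti-homomorphism identity. An alternative route that avoids citing that property is to compute $(f*g^c)_0$ directly through the scalar-vector formula~\eqref{formulaproduct}, obtaining $(f*g^c)_0=f_0g_0+\langle f_v,g_v\rangle_*$, and then to check, again via~\eqref{formulaproduct}, that the vector parts of $f*g^c$ and $g*f^c$ are opposite (using the symmetry $\langle f_v,g_v\rangle_*=\langle g_v,f_v\rangle_*$ and the antisymmetry $f_v\pv g_v=-g_v\pv f_v$), so that their sum is purely ``real'' and equals twice this scalar part. Either route reduces the statement to bookkeeping on the decomposition $f=f_0+f_v$, so no genuine obstacle is expected beyond keeping track of signs.
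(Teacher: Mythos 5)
Your proposal is correct and follows essentially the same route as the paper's proof: expand $(f+g)*(f^c+g^c)$ by bilinearity, rewrite the cross term $g*f^c$ as $(f*g^c)^c$ via the anti-homomorphism property of the regular conjugate, and conclude with $h+h^c=2h_0$ and the definition $\langle f,g\rangle_*=(f*g^c)_0$. The paper performs exactly these steps (leaving the anti-homomorphism identity implicit in the line $g*f^c=(f*g^c)^c$), so your write-up is just a slightly more explicit version of the same argument.
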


\begin{proof}
The following chain of equalities yields the thesis
\begin{align*}
(f+g)^{s}=(f+g)*(f^{c}+g^{c})&=f*f^{c}+f*g^{c}+g*f^{c}+g*g^{c}\\
&=f^{s}+g^{s}+f*g^{c}+(f*g^{c})^{c} \\
&=f^{s}+g^{s}+2(f*g^{c})_{0}=f^{s}+g^{s}+2\langle f,g\rangle_{*}.
\end{align*}
\end{proof}

\subsection{Semi-regular functions}

Another interesting property of a regular function $f$ is the structure of its zero set $V(f)$~\cite{G-S,G-S-St, G-P,GPSalgebra} and 
of its singularities~\cite{G-S-St, GPSadvances,GPSdivision,stoppatopoles,stoppatosing}. 
 Ghiloni, Perotti and Stoppato proved the following statement in~\cite[Theorem 3.5]{GPSdivision}, generalizing results due to several authors.
\begin{theorem}[Ghiloni-Perotti-Stoppato]
Assume that $\Omega$ is either a slice or a product domain and let $f\in\So(\Omega)$.
\begin{itemize}
\item  If $f\not\equiv 0$ then the intersection $V(f)\cap\C_{J}^{+}$ is closed and discrete in $\Omega_{J}$ for all
$J\in\SF$ with at most one exception $J_{0}$, for which it holds $f|_{\Omega_{J_{0}}^{+}}\equiv 0$.
\item  If $f^{s}\not\equiv 0$ then the set $V(f)$ is a union of isolated points or isolated spheres of the form $\SF_{q}$.
\item If $\Omega$ is a slice domain, then $f\not\equiv 0$ implies $f^{s}\not\equiv 0$.
\end{itemize}
\end{theorem}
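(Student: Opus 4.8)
The plan is to reduce all three assertions to one–variable complex analysis performed slice by slice, and then to glue the slices together by means of the Identity Principle and the Representation Formula. First I would fix $J\in\SF$, complete $(1,J)$ to an orthonormal basis $(1,J,K,JK)$ of $\HH$, and invoke the splitting lemma to write $f|_{\Omega_{J}}=F_{1}+F_{2}K$ with $F_{1},F_{2}\colon\Omega_{J}\to\C_{J}$ holomorphic in the ordinary sense. Then $V(f)\cap\C_{J}^{+}=V(F_{1})\cap V(F_{2})$ inside the semi-slice $\Omega_{J}^{+}$, so by the classical identity theorem each $V(F_{\ell})$ is either all of $\Omega_{J}^{+}$ (when $F_{\ell}\equiv0$ there) or closed and discrete. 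Consequently $V(f)\cap\C_{J}^{+}$ is closed and discrete \emph{unless} $F_{1}\equiv F_{2}\equiv0$ on $\Omega_{J}^{+}$, that is, unless $f|_{\Omega_{J}^{+}}\equiv0$; this isolates the exceptional behaviour to semi-slices on which $f$ vanishes identically.

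To see that at most one such exceptional semi-slice occurs, I would argue that if $f$ vanished identically on two distinct semi-slices $\Omega_{J_{1}}^{+}$ and $\Omega_{J_{2}}^{+}$, then its zero set would contain accumulation points lying in two different semi-slices, so the Identity Principle recalled above would force $f\equiv0$, contrary to $f\not\equiv0$. The one configuration to check apart is $J_{2}=-J_{1}$, i.e. vanishing on both halves $\Omega_{J_{1}}^{+}$ and $\Omega_{J_{1}}^{-}$ of a single complex line: here the Representation Formula $f(x+Iy)=\tfrac12\big(f(x+J_{1}y)+f(x-J_{1}y)\big)+\tfrac{IJ_{1}}{2}\big(f(x-J_{1}y)-f(x+J_{1}y)\big)$ reconstructs $f$ from these two halves and again yields $f\equiv0$. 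This proves the first bullet.

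For the second bullet I would exploit the sphere-by-sphere structure supplied by the Representation Formula: for $q=x+Iy$ with $y>0$ one writes $f(x+Iy)=b(x,y)+I\,c(x,y)$ with $b,c\in\HH$ independent of $I$, so a zero on the sphere $\SF_{q}$ forces $I=-b\,c^{-1}$ when $c\neq0$ (hence at most one zero) and forces $f\equiv b$ on the sphere when $c=0$ (hence either no zero or the whole sphere). Thus on each sphere $V(f)$ is empty, a single point, or all of $\SF_{q}$. To locate these spheres I would use that $f(q)=0$ implies $f^{s}(q)=(f*f^{c})(q)=0$ directly from the definition of the $*$-product, so $V(f)\subseteq V(f^{s})$; since $f^{s}\in\So_{\R}(\Omega)$ is slice preserving its zero set is circular, and $f^{s}\not\equiv0$ makes each slice restriction a non-trivial holomorphic function with discrete zeros, so $V(f^{s})$ is a union of isolated spheres. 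Transporting this isolation to $V(f)$ gives exactly a union of isolated points and isolated spheres.

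The third bullet I would prove by contradiction using the real axis, available because $\Omega$ is a slice domain. If $f^{s}\equiv0$, then writing $f=f_{0}+f_{1}I+f_{2}J+f_{3}K$ with $f_{\ell}\in\So_{\R}(\Omega)$, the identity $f^{s}=f_{0}^{2}+f_{1}^{2}+f_{2}^{2}+f_{3}^{2}$ evaluated at $t\in\Omega\cap\R$ gives $\sum_{\ell}f_{\ell}(t)^{2}=0$ with each $f_{\ell}(t)\in\R$, so $f$ vanishes on the nonempty open real set $\Omega\cap\R$; the ordinary identity theorem on each slice then makes $f$ vanish on a full $\HH$-neighbourhood of any real accumulation point, and the Identity Principle (open-set version) gives $f\equiv0$, a contradiction. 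Combining the second and third bullets also shows \emph{a posteriori} that the exceptional semi-slice of the first bullet cannot occur in a slice domain, which explains why it is a product-domain phenomenon. The step I expect to be most delicate is the second bullet: passing from the pointwise inclusion $V(f)\subseteq V(f^{s})$ to the global union-of-spheres picture while keeping correct bookkeeping of isolated points versus whole spheres, and handling the product-domain case where the slices $\Omega_{J}$ are disconnected and the clean propagation available over $\R$ is lost.
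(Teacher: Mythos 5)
This statement is not proved in the paper at all: it is quoted as background from \cite[Theorem 3.5]{GPSdivision}, whose own proof is carried out in the more general framework of stem functions over real alternative $*$-algebras. So there is no internal proof to compare against, and your slice-by-slice complex-analytic reconstruction (splitting lemma, classical identity theorem, Representation Formula, Identity Principle) is the natural classical route. Its core steps are correct: the inclusion $V(f)\subseteq V(f^{s})$ read off from the definition of the $*$-product, the empty/one-point/whole-sphere trichotomy for $V(f)\cap\SF_{q}$ coming from $f(x+Iy)=b+Ic$, and the real-axis argument for the third bullet (each $f_{\ell}(t)$ is real, so $\sum_{\ell}f_{\ell}(t)^{2}=0$ forces $f\equiv 0$ on $\Omega\cap\R$ and then on every slice).

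Two steps, however, are stated in a form that does not quite hold and need repair, both within reach of your own tools. First, in the first bullet you apply the identity-theorem dichotomy (``$V(F_{\ell})$ is either all of $\Omega_{J}^{+}$ or closed and discrete'') on the semi-slice $\Omega_{J}^{+}$. This requires $\Omega_{J}^{+}$ to be connected, which is true for product domains (where $\Omega=\Omega_{I}^{+}\times\SF$) but can fail for slice domains; moreover the theorem asks for discreteness in $\Omega_{J}$, so you must also exclude accumulation of zeros of $\C_{J}^{+}$ at real points of $\Omega_{J}$. The fix is to run the identity theorem on $\Omega_{J}$, which \emph{is} connected when $\Omega$ is a slice domain: any accumulation point in $\Omega_{J}$ of zeros lying in $\C_{J}^{+}$ forces $F_{1}\equiv F_{2}\equiv 0$ on $\Omega_{J}$, hence $f\equiv 0$ by the Representation Formula; this also shows the exceptional semi-slice $J_{0}$ can only occur on product domains. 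Second, in the second bullet you assert that $f^{s}\not\equiv 0$ makes every slice restriction of $f^{s}$ a non-trivial holomorphic function; on a product domain each half-slice is a separate connected component, so this is not automatic and needs the observation that a slice preserving function has the form $\alpha(x,y)+J\beta(x,y)$ with $\alpha,\beta$ real-valued, whence vanishing on a single semi-slice forces $\alpha\equiv\beta\equiv 0$ and thus $f^{s}\equiv 0$ on all of $\Omega$. Both missing lemmas are standard and easily supplied, but as written these are genuine gaps precisely at the points where the slice/product distinction matters.
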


%
%
%
%
%

In the same paper, Ghiloni, Perotti and Stoppato also developed a theory of singualarities for slice regular functions,
which is a consequence of a detailed study of Laurent expansions near spheres $\SF_{q}$ and real points;
the notion of meromorphic function can thus be translated in this context as that of \textit{semi-regular function}. 
We now briefly recall the notions of removable singularity and pole at non real points; the case of real points
is completely analogous. For more detailed statements and complete proofs see~\cite[Section 6]{GPSdivision}.


 Let $\Omega$ be a circular domain and $p\in\Omega\setminus\R$.
Any $f\in\So(\Omega\setminus\SF_{p})$ can be written near $\SF_{p}$
as 
$$
f(q)=\sum_{n\in\Z}(q-p)^{*n}b_{n},\qquad f(q)=\sum_{\nu\in\Z}\Delta_{p}^{\nu}(q)(qu_{\nu}+v_{\nu}),
$$
with $b_{n},u_{\nu},v_{\nu}\in\HH$, for any $n$ and $\nu$. 
The point $p$ is said to be a \textit{pole} for $f$ if there exists an $n_{0}\geq 0$ such that $b_{n}=0$ for all $n<-n_{0}$, in particular if $f$ extends to a slice regular function in a circular open set containing $\SF_{p}$, $p$ called a removable singularity; the minimum of the above $n$ is called the \textit{order of the pole} and denoted as $\text{ord}_{f}(p)$. 
If $p$ is neither a removable singularity nor a pole, then it is called an \textit{essential singularity} for $f$ and $\text{ord}_{f}(p)$ is set to be $+\infty$. Finally, the \textit{spherical order} of $f$ at $\SF_{p}$ is the smallest even natural
number $2\nu_{0}$ such that $u_{\nu}=v_{\nu}=0$ for all $\nu<-\nu_{0}$. If no such $\nu_{0}$ exists, then we set $\text{ord}_{f}(\SF_{p})=+\infty$.

Non-real singularities for slice regular functions can be classified as follows (see~\cite[Theorem 6.4]{GPSdivision}). Let $\Omega$ be
a circular domain, $p\in\Omega\setminus\R$ and set $\widetilde\Omega:=\Omega\setminus \SF_{p}$. If 
$f\in\So(\widetilde\Omega)$ then one of the following holds:
\begin{itemize}
\item every point of $\SF_{p}$ is a removable singularity for $f$; in this case $\text{ord}_{f}(\SF_{p})=0=\text{ord}_{f}(w)$, for any $w\in\SF_{p}$;
\item every point of $\SF_{p}$ is a non removable pole for $f$. There exists $n\in\N\setminus\{0\}$ such that the function $\Delta_{p}^{n}(q)f(q)$ extends to a slice regular function $g$ defined on $\Omega$ that has at most one zero in $\SF_{p}$; in this case
$\text{ord}_{f}(\SF_{p})=2k$; 
moreover, $\text{ord}_{f}(w)=k$ and $\lim_{\Omega\ni x\to w}|f(x)|=+\infty$ for all $w\in\SF_{p}$ except the possible zero of $g$, at which $\text{ord}_{f}$ must be less than $k$;
\item every point of $\SF_{p}$, except at most one, is an essential singularity for $f$; in this case
$\text{ord}_{f}(\SF_{p})=+\infty$ and there exists at most one point $w\in\SF_{p}$ such that $\text{ord}_{f}(w)<\infty$.
\end{itemize}
In the special case of a slice preserving function $f$, for any point $\tilde p$ belonging to
the sphere $\SF_{p}$, it holds $\text{ord}_{f}(\SF_{p})=2\text{ord}_{f}(\tilde p)$, i.e. all the points of $\SF_{p}$ have the same order.

Notice that, the set of singularities has different structure with respect to the zero set: indeed there are no non-real isolated singular points for a slice regular function. We now give the definition of semi-regular function.

\begin{definition}
A function $f$ is said to be \textit{slice semi-regular} in a nonempty circular domain $\Omega$, if there exists
a circular open subset $\widetilde\Omega\subseteq\Omega$ such that $f\in\So(\widetilde \Omega)$ and such that
each point of $\Omega\setminus\widetilde\Omega$ is a pole or a removable singularity for $f$.
The set of slice semi-regular functions on $\Omega$ will be denoted as $\Sm(\Omega)$; the sets of slice preserving
and of $\C_{I}$-preserving (for some $I\in\SF$) semi-regular functions on $\Omega$ as $\Sm_{\R}(\Omega)$ and $\Sm_{I}(\Omega)$, respectively.
\end{definition}

%

\subsection{$\Sm$ as a 4-dimensional vector space over $\Sm_{\R}$}

We now pass to analyze some algebraic properties of $\Sm(\Omega)$. First of all consider the action 
 $\So_{\R}(\Omega)\times\So(\Omega)\to\So(\Omega)$, given by $(f,g)\mapsto f*g=fg$. 
Thanks to the Identity Principle and the fact that the zero set of a non-constant regular function has empty interior,
the equality $fg\equiv 0$ implies that either $f$ or $g$ is identically zero (this is a special case of \cite[Proposition 3.8]{GPSdivision}). In particular $(\So_{\R}(\Omega),+,*)$ is an integral domain and $\Sm_{\R}(\Omega)$ is a field. 
Moreover, recalling~\cite[Theorem 6.6]{GPSdivision}, we have that  if $\Omega$ is a slice domain then $\Sm(\Omega)$ is a division algebra and, also when $\Omega$ is a product domain, any $f\in\Sm(\Omega)$ such that $f^{s}\not\equiv 0$ has a multiplicative inverse given by $f^{-*}=(f^{s})^{-1}f^{c}$.

In the case of semi-regular functions, we can describe the structure of the algebra $\Sm(\Omega)$ adjusting to this situation the 
already mentioned results given in~\cite[Proposition 3.12]{C-GC-S} and~\cite[Lemma 6.11]{G-M-P}.  

\begin{proposition}\label{vectorspace}
Let $(1,I,J,K)$ be a basis of $\HH$. The map
$$
(f_{0},f_{1},f_{2},f_{3})\ni(\Sm_{\R}(\Omega))^{4}\mapsto f_{0}+f_{1}I+f_{2}J+f_{3}K\in\Sm(\Omega)
$$
is a $\Sm_{\R}(\Omega)$-linear isomorphism.
In particular $\Sm(\Omega)$ is a $4$-dimensional vector space on $\Sm_{\R}(\Omega)$.
\end{proposition}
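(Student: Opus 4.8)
The plan is to establish the claimed $\Sm_{\R}(\Omega)$-linear isomorphism by proving that the map $\Phi\colon(f_0,f_1,f_2,f_3)\mapsto f_0+f_1I+f_2J+f_3K$ is well-defined, $\Sm_{\R}(\Omega)$-linear, injective and surjective. The $\Sm_{\R}(\Omega)$-linearity is immediate from the fact that multiplication by an element of $\Sm_{\R}(\Omega)$ is just pointwise (hence $*$-)multiplication and commutes with everything, so $\Phi$ respects the module operations by the distributivity of $*$. The crux of the argument is the bijectivity, which amounts to showing that every $f\in\Sm(\Omega)$ admits a \emph{unique} decomposition $f=f_0+f_1I+f_2J+f_3K$ with coefficients $f_\ell\in\Sm_{\R}(\Omega)$.

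For surjectivity, I would reduce to the regular case already quoted in the excerpt from \cite[Proposition 3.12]{C-GC-S} and \cite[Lemma 6.11]{G-M-P}, which give the analogous statement for $\So(\widetilde\Omega)$ on the circular open set $\widetilde\Omega$ of regularity of $f$. Given $f\in\Sm(\Omega)$, restrict it to $\widetilde\Omega$, decompose $f|_{\widetilde\Omega}=f_0+f_1I+f_2J+f_3K$ with $f_\ell\in\So_{\R}(\widetilde\Omega)$, and then argue that each $f_\ell$ extends to an element of $\Sm_{\R}(\Omega)$. The extension is the point requiring care: I would invoke the explicit description of the real coefficients. Since $(1,I,J,K)$ is orthonormal, the same scalar-part computation as in Remark~\ref{rkdelta} gives $f_0=(f\cdot1)_0$, $f_1=-(f\cdot I)_0$, $f_2=-(f\cdot J)_0$, $f_3=-(f\cdot K)_0$; because right multiplication by a constant and taking the real part $(\cdot)_0=(\cdot+(\cdot)^c)/2$ are operations that manifestly preserve poles and removable singularities, each $f_\ell$ inherits from $f$ the property of being semi-regular, and it is slice preserving since it is $\HH$-real-valued on slices by construction. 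Hence $f_\ell\in\Sm_{\R}(\Omega)$ and $\Phi$ is onto.

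Injectivity is the heart of the matter and I would phrase it as: if $f_0+f_1I+f_2J+f_3K\equiv0$ in $\Sm(\Omega)$ with $f_\ell\in\Sm_{\R}(\Omega)$, then all $f_\ell\equiv0$. Applying the formulas $f_0=(f\cdot1)_0$, $f_\ell=-(f\cdot\delta_\ell)_0$ for $\delta_\ell\in\{I,J,K\}$ to $f\equiv0$ forces each $f_\ell\equiv0$ directly; this is exactly the non-degeneracy recorded in Remark~\ref{rkdelta}, now read on $\widetilde\Omega$ and then propagated to $\Omega$ by the Identity Principle, since a semi-regular function that vanishes on the open dense set $\widetilde\Omega$ vanishes identically.

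The main obstacle I anticipate is not the linear algebra but the analytic bookkeeping in the surjectivity step: one must be certain that the coefficients $f_\ell$ produced by the regular decomposition on $\widetilde\Omega$ do not acquire singularities worse than poles on $\Omega\setminus\widetilde\Omega$, i.e.\ that the decomposition is compatible with the singular structure. The clean way around this is precisely to \emph{define} the $f_\ell$ intrinsically by the projection formulas $f_\ell=\pm(f\cdot\delta_\ell)_0$ rather than by restriction-and-extension; then the whole statement follows by checking that these projectors are $\Sm_{\R}(\Omega)$-linear, are mutually inverse to $\Phi$, and land in $\Sm_{\R}(\Omega)$, each of which is a short verification using only that $(\cdot)_0$ and right multiplication by constants preserve semi-regularity. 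This reduces the proposition to the orthonormal-basis identities already set up in the preliminaries together with the Identity Principle.
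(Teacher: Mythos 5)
Your overall architecture (reduce to the regular decomposition of \cite[Proposition 3.12]{C-GC-S} on the regularity domain, then extend the coefficients; handle linearity and injectivity by the identity principle) matches the paper's, and the linearity and injectivity parts are fine. The genuine gap is in the step you flag as the ``clean way around'': the claim that taking the real part $(\cdot)_{0}=\bigl(\cdot+(\cdot)^{c}\bigr)/2$ ``manifestly preserves poles and removable singularities''. This is not manifest --- it is essentially the statement being proven. Within this paper's framework it is in fact circular: the conjugate $f^{c}$, and hence $(\cdot)_{0}$, of a function is \emph{defined} through the coefficient decomposition (see formula~\eqref{regconj}), so for a semi-regular $f$ on $\Omega$ the expression $(f\cdot\delta_{\ell})_{0}$ is only defined on the subdomain $\widetilde\Omega$ where $f$ is regular, and asking whether it extends semi-regularly to $\Omega$ is exactly the original question about the coefficients $f_{\ell}$. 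The projection formulas of Remark~\ref{rkdelta} therefore relocate the analytic difficulty rather than remove it: they give you nothing at the singular spheres until you know that the real-part operation is compatible with poles, which is the whole content of the proposition.

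What is actually needed --- and what the paper's proof supplies --- is a local argument at each singular sphere: if $\SF_{q_{0}}$ is a pole of $f$, choose $m$ so that $\Delta_{q_{0}}^{m}\cdot f$ extends regularly to a circular neighborhood $\mathcal{U}$ of $\SF_{q_{0}}$, apply the \emph{regular} decomposition to $\Delta_{q_{0}}^{m}\cdot f$ on $\mathcal{U}$ to get coefficients $g_{n}\in\So_{\R}(\mathcal{U})$, and then invoke the \emph{uniqueness} of the regular decomposition on $\mathcal{U}\setminus\SF_{q_{0}}$ to identify $g_{n}=\Delta_{q_{0}}^{m}\cdot f_{n}$; this exhibits each $f_{n}=\Delta_{q_{0}}^{-m}\cdot g_{n}$ as having at worst a pole at $\SF_{q_{0}}$ (real poles being analogous). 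If you prefer your projection-formula route, you can make it rigorous by first defining $f^{c}$ intrinsically (via stem functions, as in the cited Ghiloni--Perotti--Stoppato framework) and proving that conjugation maps $\Sm(\Omega)$ to $\Sm(\Omega)$ --- but that proof is again the same local $\Delta_{q_{0}}^{m}$ argument, so you cannot avoid it. As written, your proposal assumes precisely the step the paper's proof exists to establish.
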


\begin{proof}
Let $f\in\Sm(\Omega)$. Let $\Omega'$ be
a circular subdomain of $\Omega$ such that $f\in\So(\Omega')$ and such that every point of $\Omega\setminus \Omega'$
is a pole for $f$. Proposition 3.12 in~\cite{C-GC-S} guarantees the existence of a unique 4-tuple
$f_{0},f_{1},f_{2},f_{3}\in\So_{\R}(\Omega')$ such that $f=f_{0}+f_{1}I+f_{2}J+f_{3}K$. 
We are left with proving that $f_{0},\dots, f_{3}\in\Sm_{\R}(\Omega)$. If $\SF_{q_{0}}$ is a spherical pole of $f$ then there exists $m\in\N$
such that $\Delta_{q_{0}}^{m}\cdot f$ extends regularly in an open circular neighborhood $\mathcal{U}\subset\Omega$ of the sphere $\SF_{q_{0}}$. Now consider the function $\Delta_{q_{0}}^{m}\cdot f$ and apply again~\cite[Proposition 3.12]{C-GC-S}, finding
$g_{0},\dots, g_{3}\in\So_{\R}(\mathcal{U})$ such that $\Delta_{q_{0}}^{m}\cdot f=g_{0}+g_{1}I+g_{2}J+g_{3}K$.
Nonetheless we also have $\Delta_{q_{0}}^{m}\cdot f=\Delta_{q_{0}}^{m}\cdot f_{0}+\Delta_{q_{0}}^{m}\cdot f_{1}I+\Delta_{q_{0}}^{m}\cdot f_{2}J+\Delta_{q_{0}}^{m}\cdot f_{3}K$ on $\mathcal{U}\setminus\SF_{q_{0}}$ and the uniqueness given in~\cite[Proposition 3.12]{C-GC-S} 
ensures $\Delta_{q_{0}}^{m}\cdot f_{n}=g_{n}$ on $\mathcal{U}\setminus\SF_{q_{0}}$, for $n=0,1,2,3$.
Last equality shows that $f_{0},\dots f_{3}$ have a pole at $\SF_{q_{0}}$. The case of 
a real pole is treated analogously, showing that $f_{0},\dots f_{3}$ belong to $\Sm_{\R}(\Omega)$.
\end{proof}

The uniqueness of the above statement gives as an immediate consequence that $\Sm_{\R}(\Omega)$ is the center of $\Sm(\Omega)$ and that $\So_{\R}(\Omega)$ is the center of $\So(\Omega)$.

\begin{remark}
The above proof shows that if $f=f_{0}+f_{1}I+f_{2}J+f_{3}K\in\Sm(\Omega)$ has a sphere of poles $\SF_{q_{0}}$ of spherical order $2m$, then any point of $\SF_{q_{0}}$ is 
a pole of spherical order at most $2m$ or a removable singularity for each of the functions $f_{0},\dots, f_{3}$
and that $\text{ord}_{f}(\SF_{q_{0}})=\max\{\text{ord}_{f_{0}}(\SF_{q_{0}}),\dots, \text{ord}_{f_{3}}(\SF_{q_{0}})\}$.
\end{remark}

\subsection{Zero divisors and idempotents}

From~\cite[Theorem 6.6]{GPSdivision} we have that $\Sm(\Omega)$ contains zero divisors if and only if $\Omega$ is a product
domain (for a thorough study of the zero set of zero divisors see~\cite{G-P-wing}, while \cite[Example 3]{altavilladiff} contains explicit computations for relevant examples; for an interesting application of idempotents, \textit{i.e.} $f\not\equiv 0,1$ such that $f^{*2}=f$, to function spaces, see~ \cite{M-S} which
sets questions raised in~\cite{deF-G-S}).
In this case $f$ is a zero divisor if and only if $f^{s}\equiv 0$. 
In the sequel of this paper, we will often make use of the ``basic'' idempotents given in the following definition.
\begin{definition}
Let $\Omega$ be any product domain and $I\in\SF$. We define $\ell^{+,I}:\Omega\to\HH$ and $\ell^{-,I}:\Omega\to\HH$
as 
$$
\ell^{+,I}(x+Jy)=\frac{1-\mathcal{J}I}{2},\qquad\ell^{-,I}(x+Jy)=\frac{1+\mathcal{J}I}{2},
$$
where $y>0$.
\end{definition}
It is easily seen that $\ell^{+,I}$ and $\ell^{-,I}$ are idempotents and that the following equalities hold
(see~\cite[Remark2.4]{A-S}):
$$\left(\ell^{+,I}\right)^c=1-\ell^{+,I}=\ell^{-,I},\quad (\ell^{+,I})^{s}=(\ell^{-,I})^{s}=\ell^{+,I}*\ell^{-,I}\equiv 0$$

We now classify idempotents in $\Sm(\Omega)$ showing in particular that they have only removable singularities (and therefore, by a slight abuse of notation, we say they are regular).

\begin{proposition}\label{characterizationidempotent}
Let $f\in\Sm(\Omega)\setminus\{0,1\}$. The function $f$ is an idempotent for the $*$-product if and only if
$f$ belongs to $\So(\Omega)$ and it is a zero divisor such that $f_{0}\equiv \frac{1}{2}$ (and thus $f_{v}^{s}\equiv -\frac{1}{4}$).
\end{proposition}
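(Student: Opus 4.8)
The plan is to turn the idempotency relation $f^{*2}=f$ into two scalar/vector equations via the description~\eqref{formulaproduct} of the $*$-product, read off the values of $f_0$ and $f_v^s$ using that $\Sm_\R(\Omega)$ is a field, and then separately rule out genuine poles. First I would compute $f^{*2}$ by specializing~\eqref{formulaproduct} to $g=f$. The antisymmetric term disappears, since $f_v\pv f_v=0$ by definition, while the ``scalar product'' reduces to
\[
\langle f_v,f_v\rangle_*=(f_v*f_v^c)_0=(f_v^s)_0=f_v^s,
\]
because $f_v^c=-f_v$ and $f_v^s\in\Sm_\R(\Omega)$ coincides with its own real part. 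This yields the clean identity
\[
f^{*2}=f_0^2-f_v^s+2f_0 f_v,
\]
whose real part is $f_0^2-f_v^s\in\Sm_\R(\Omega)$ and whose vector part is $2f_0 f_v$.

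Assuming $f^{*2}=f$, I would then compare the two sides through the uniqueness of the decomposition in Proposition~\ref{vectorspace}. The vector parts give $2f_0 f_v=f_v$, i.e. $(2f_0-1)*f_v=0$ in the $\Sm_\R(\Omega)$-vector space $\Sm(\Omega)$. Since $\Sm_\R(\Omega)$ is a field, if the scalar $2f_0-1$ were not identically zero it would be invertible and force $f_v\equiv0$; but then $f=f_0$ with $f_0^2=f_0$ would give $f_0\in\{0,1\}$, contradicting $f\notin\{0,1\}$. Hence $f_0\equiv\frac12$, and substituting into the real part $f_0^2-f_v^s=f_0$ gives $f_v^s\equiv-\frac14$; consequently $f^s=f_0^2+f_v^s\equiv0$, so $f$ is a zero divisor and $\Omega$ must be a product domain. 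The converse is immediate: if $f\in\So(\Omega)$ is a zero divisor with $f_0\equiv\frac12$, then $f_v^s=-f_0^2\equiv-\frac14$, and inserting $f_0=\frac12$, $f_v^s=-\frac14$ into the displayed identity for $f^{*2}$ gives $f^{*2}=\frac14+\frac14+f_v=f$.

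The step I expect to be the main obstacle is the assertion that an idempotent is genuinely \emph{regular}, i.e. has only removable singularities, so that indeed $f\in\So(\Omega)$. The natural reduction is by contradiction: if $f$ had a sphere of genuine poles $\SF_{q_0}$ of spherical order $2m>0$, then, $\Delta_{q_0}$ being slice preserving and central, $g:=\Delta_{q_0}^m*f$ is regular and $f^{*2}=f$ becomes $g^{*2}=\Delta_{q_0}^m*g$, that is $g*(g-\Delta_{q_0}^m)=0$; expanding $g^s$ and again using that $\Sm_\R(\Omega)$ is a field pins down $g_0=\tfrac12\Delta_{q_0}^m$ and forces $g$ to be a zero divisor. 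The delicate point is to convert this zero-divisor relation into the actual \emph{absence} of poles: concretely it amounts to showing that $f_v^s=f_1^2+f_2^2+f_3^2\equiv-\frac14$ is incompatible with any of the slice-preserving components $f_1,f_2,f_3$ carrying a pole, which is precisely where one must invoke the fine structure of $\Sm_\R(\Omega)$ on $\Omega$ to exclude cancellation among the polar parts of the three squares, rather than relying on the purely algebraic identities used above. Once regularity is secured, the parenthetical value $f_v^s\equiv-\frac14$ has already been recorded, completing the characterization.
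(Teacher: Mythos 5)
Your algebraic half is correct and coincides with the paper's own argument: the same specialization of~\eqref{formulaproduct} gives the system $f_0^2-f_v^s=f_0$, $(2f_0-1)f_v=0$, the same field argument in $\Sm_\R(\Omega)$ forces $f_0\equiv\frac12$ and $f_v^s\equiv-\frac14$ (hence $f^s\equiv 0$), and the converse is the same one-line verification; that you exclude $f_v\equiv 0$ via idempotents of the field $\Sm_\R(\Omega)$ while the paper invokes the absence of zero divisors in $\Sm_\R(\Omega)$ is immaterial. The genuine gap is exactly the step you flag and postpone: you never prove that an idempotent has only removable singularities. Your reduction ($g:=\Delta_{q_0}^m f$ regular, $g*(g-\Delta_{q_0}^m)=0$, $g_0=\frac12\Delta_{q_0}^m$) is fine as far as it goes, but the decisive implication — that $f_1^2+f_2^2+f_3^2\equiv-\frac14$ is incompatible with poles in the components — is left as a hope.

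That hope cannot be realized, because the implication is false: on a product domain the cancellation you would need to exclude really occurs, since $\Sm_\R(\Omega)$ contains $\mathcal J$ with $\mathcal J^2=-1$. Fix an orthonormal basis $(1,I,J,K)$, a sphere $\SF_{q_0}\subset\Omega$, and $h=\Delta_{q_0}^{-1}\in\Sm_\R(\Omega)$, and set
\begin{equation*}
f=\ell^{+,I}*(1+hJ)=\frac12-\frac{\mathcal J}{2}\,I+\frac h2\,J-\frac{h\mathcal J}{2}\,K .
\end{equation*}
Then $f_0\equiv\frac12$ and $f_v^s=-\frac14+\frac{h^2}{4}-\frac{h^2}{4}\equiv-\frac14$, so $f^{*2}=f$ (directly: $\ell^{+,I}$ is idempotent, $J*\ell^{+,I}=\ell^{-,I}*J$ and $\ell^{+,I}*\ell^{-,I}\equiv0$), yet $f\notin\So(\Omega)$ because $f_2=\frac h2$ and $f_3=-\frac{h\mathcal J}{2}$ have poles on $\SF_{q_0}$ (uniqueness of the components as in Proposition~\ref{vectorspace}). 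So your algebraic computation proves the correct statement — $f$ is an idempotent if and only if it is a zero divisor with $f_0\equiv\frac12$ — while the regularity assertion, and the portion of your plan devoted to it, must be abandoned. For the record, the paper's own proof of that assertion does not survive scrutiny: it reads \cite[Theorem 6.4(2)]{GPSdivision} as saying that the zero of $g_v=\Delta_{q_0}^k\cdot f_v$ on $\SF_{q_0}$ has multiplicity $\tilde k<k$, whereas that theorem bounds the \emph{pole order of $f_v$} at the exceptional point (in the example above the zero of $g_v$ has multiplicity $2k$); moreover, multiplicativity of the symmetrization gives $g_v^s=\Delta_{q_0}^{\tilde k}\gamma^s$, not $\Delta_{q_0}^{2\tilde k}\gamma^s$, and with the corrected statements no contradiction arises. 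Consistently, Theorem~\ref{ker-dimension}(1) of the paper itself exhibits the idempotents $\ell^{-,I}*(1+\beta J)$ with $\beta\in\Sm_\R(\Omega)$ arbitrary, hence possibly with poles.
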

\begin{proof}
Suppose $f\in\Sm(\Omega)$ is an idempotent. This can be written as $f^{*2}=f$. The previous equality can be written as $(f-1)*f\equiv 0$ 
which entails that $f$ is a zero divisor (since $f\not\equiv 0,1$). Using the splitting $f=f_{0}+f_{v}$
and the fact that $f_{v}*f_{v}=-f_{v}^{s}$ the equality $f^{*2}=f$ is equivalent to the system
\begin{equation}\label{idempotentsystem}
\begin{cases}
f_{0}^{2}-f_{v}^{s}=f_{0}\\
2f_{0}f_{v}=f_{v}.
\end{cases}
\end{equation}
Last equality can be also written as $(2f_{0}-1)f_{v}\equiv 0$ which gives either $f_{v}\equiv 0$ or $f_{0}\equiv \frac{1}{2}$.
The first case cannot hold since $\Sm_{\R}(\Omega)$ does not contain zero divisors; thus $f_{0}\equiv \frac{1}{2}$ and the first equality of system~\eqref{idempotentsystem} becomes $f_{v}^{s}\equiv -\frac{1}{4}$.
Then we are left with proving that $f$ is regular. Since $f_{0}\equiv\frac{1}{2}$ whenever defined, it can be extended regularly to 
the function $\frac{1}{2}$ on the domain $\Omega$, so it only has removable singularities. Now suppose $f_{v}$
has a spherical pole in $\SF_{q_{0}}$ of order $k$, thus there exists a function $g_{v}$ regular on a neighborhood $\mathcal{U}$ of $\SF_{q_{0}}$
which has at most one possible isolated zero in $\SF_{q_{0}}$ of order $\tilde k<k$, 
such that 
\begin{equation}\label{eqgv1}
g_{v}=\Delta_{q_{0}}^{k}\cdot f_{v},
\end{equation}
on $\mathcal{U}\setminus\SF_{q_{0}}$ (see~\cite[Theorem 6.4 (2)]{GPSdivision}). Thanks to~\cite[Theorem 22 and Remark 14]{G-P}  we can also write 
\begin{equation}\label{eqgv2}
g_{v}=(q-w_{1})*\dots*(q-w_{\tilde k})*\gamma,
\end{equation}
where $w_{1},\dots, w_{\tilde k}\in\SF_{q_{0}}$, $w_{n+1}\neq w_{n}^{c}$ ($n=1,\dots, \tilde k-1$) and $\gamma$ is never vanishing on $\SF_{q_{0}}$. Computing the symmetrized function $g_{v}^{s}$ from equalities~\eqref{eqgv1} and~\eqref{eqgv2},
we obtain 
$$
\Delta_{q_{0}}^{2\tilde k}\gamma^{s}=g_{v}^{s}=\Delta_{q_{0}}^{2k}f_{v}^{s}=-\frac{1}{4}\Delta_{q_{0}}^{2k}.
$$
Since $\gamma^{s}$ is never vanishing on $\SF_{q_{0}}$, we then obtain $\tilde k=k$ which is a contradiction to the above 
inequality. 
The case of a real pole is treated analogously.
This shows that $f_{v}$ has no poles and thus $f$ belongs to $\So(\Omega)$.

Straightforward computations show that if $f\in\So(\Omega)$ is such that $f_{0}\equiv \frac{1}{2}$ and $f^{s}\equiv 0$ (that is $f_{v}^{s}\equiv -\frac{1}{4}$), then
$f$ is an idempotent.
\end{proof}

The above statement allows us to give an explicit characterization of zero divisors in $\Sm(\Omega)$.

\begin{proposition}\label{propzerodividem}
Let $f\in\Sm(\Omega)$ be a zero divisor.
For any $\delta \in\HH$ such that $|\delta|=1$ and $(f\delta)_{0}\not\equiv 0$, there exits $\sigma=\sigma(\delta)\in\So(\Omega)$ idempotent,
such that
\begin{equation}\label{zerodividem}
f=2(f\delta)_{0}\sigma\delta^{c}.
\end{equation}
In particular, if $f_{0}\not\equiv 0$, we can write $f=(2f_{0})\sigma$ for a suitable idempotent $\sigma$.
\end{proposition}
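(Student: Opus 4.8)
The plan is to manufacture the idempotent $\sigma$ by simply dividing $f$ by the scalar factor $(f\delta)_{0}$. Since $f$ is a zero divisor (so $\Omega$ is a product domain and $f^{s}\equiv 0$), and since the real-part map is $\Sm_{\R}(\Omega)$-linear by Proposition~\ref{vectorspace}, the quantity $\lambda:=(f\delta)_{0}$ lies in the field $\Sm_{\R}(\Omega)$; by hypothesis $\lambda\not\equiv 0$, hence $\lambda$ is invertible there. The natural candidate is
$$
\sigma:=\tfrac{1}{2\lambda}\,f*\delta ,
$$
and I would reduce the whole statement to checking that this $\sigma$ is an idempotent. Granting that, formula~\eqref{zerodividem} is immediate: because $|\delta|=1$ we have $\delta*\delta^{c}=\delta\delta^{c}=1$, so $2\lambda\,\sigma*\delta^{c}=f*\delta*\delta^{c}=f$; taking $\delta=1$ (so $\lambda=f_{0}$ and $\delta^{c}=1$) yields the ``in particular'' clause $f=(2f_{0})\sigma$.

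To verify that $\sigma$ is an idempotent I would compute its real part and its symmetrization and then invoke the characterization in Proposition~\ref{characterizationidempotent}. Since $\tfrac{1}{2\lambda}$ is slice preserving, hence central, $\Sm_{\R}(\Omega)$-linearity of the real part gives $\sigma_{0}=\tfrac{1}{2\lambda}(f\delta)_{0}=\tfrac{1}{2\lambda}\lambda=\tfrac12$. For the symmetrization I would expand $\sigma^{s}=\sigma*\sigma^{c}$ directly, using $\sigma^{c}=\delta^{c}*f^{c}*\tfrac{1}{2\lambda}$: the two central factors pull out and the identity $\delta*\delta^{c}=1$ collapses the middle, leaving
$$
\sigma^{s}=\tfrac{1}{4\lambda^{2}}\,f*f^{c}=\tfrac{1}{4\lambda^{2}}\,f^{s}\equiv 0 .
$$
Hence $\sigma_{v}^{s}=\sigma^{s}-\sigma_{0}^{2}\equiv-\tfrac14$.

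With $\sigma_{0}\equiv\tfrac12$ and $\sigma_{v}^{s}\equiv-\tfrac14$ established, I would feed these into the product formula~\eqref{formulaproduct}. Using $\sigma_{v}\pv\sigma_{v}=0$ and $\langle\sigma_{v},\sigma_{v}\rangle_{*}=(\sigma_{v}*\sigma_{v}^{c})_{0}=\sigma_{v}^{s}$ (because $\sigma_{v}^{c}=-\sigma_{v}$ and $\sigma_{v}*\sigma_{v}=-\sigma_{v}^{s}$), one gets
$$
\sigma*\sigma=\sigma_{0}^{2}-\sigma_{v}^{s}+2\sigma_{0}\sigma_{v}=\tfrac14+\tfrac14+\sigma_{v}=\sigma ,
$$
so $\sigma$ is a semi-regular idempotent; it is neither $0$ nor $1$ since $\sigma_{0}\equiv\tfrac12$ and $\sigma_{v}^{s}\not\equiv 0$. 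This is exactly the condition system~\eqref{idempotentsystem} whose solution was described in Proposition~\ref{characterizationidempotent}.

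I expect the only genuine subtlety to be the last clause of the claim, namely that $\sigma$ actually belongs to $\So(\Omega)$ rather than merely to $\Sm(\Omega)$: a priori $\sigma_{v}=\tfrac{1}{2\lambda}(f\delta)_{v}$ could carry poles. This is precisely the content of Proposition~\ref{characterizationidempotent}, which asserts that every semi-regular idempotent is in fact regular (the pole-order argument there, driven by $\sigma_{v}^{s}\equiv-\tfrac14\not\equiv 0$, rules out poles). Invoking it upgrades $\sigma$ to $\So(\Omega)$ and closes the argument. The conceptual point is simply that dividing the zero divisor $f$ by the central scalar $2(f\delta)_{0}$ normalizes its real part to $\tfrac12$ and thereby turns it into an idempotent; once $\sigma$ is chosen this way all the required algebraic identities are forced, and the existence of a valid $\delta$ (when one wants $f_{0}\equiv 0$) is guaranteed by the nondegeneracy Remark~\ref{rkdelta}.
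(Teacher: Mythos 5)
Your proposal is correct and takes essentially the same approach as the paper: divide $f\delta$ (equivalently $f$ when $f_{0}\not\equiv 0$) by the central factor $2(f\delta)_{0}$ so that the resulting $\sigma$ has $\sigma_{0}\equiv\tfrac12$ and $\sigma^{s}\equiv 0$, then invoke Proposition~\ref{characterizationidempotent} to conclude $\sigma$ is a regular idempotent and recover formula~\eqref{zerodividem} by multiplying back by $\delta^{c}$. The only cosmetic differences are that the paper first settles the case $f_{0}\not\equiv 0$ and reduces the general case to it, and that it lets Proposition~\ref{characterizationidempotent} deliver idempotency as well, whereas you verify $\sigma*\sigma=\sigma$ directly from formula~\eqref{formulaproduct} and use the proposition only to upgrade $\sigma$ from semi-regular to regular.
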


\begin{proof}
Assume first that $f_{0}\not\equiv 0$, then $f_{0}^{-*}=f_{0}^{-1}\in\Sm(\Omega)$. Thus, if $f=f_{0}+f_{v}$, we have that
$f=(2f_{0})\sigma$, where 
$$\sigma=\frac{1}{2}+(2f_{0})^{-1}f_{v}.$$
As $f^{s}=4f_{0}^{2}\sigma^{s}\equiv 0$, we also have that $\sigma^{s}\equiv 0$, proving that $\sigma$ is a zero divisor. Moreover,
$\sigma_{0}\equiv \frac{1}{2}$ and Proposition~\ref{characterizationidempotent} shows that $\sigma\in\So(\Omega)$ is an idempotent.
Now choose $\delta\in\HH$ with $|\delta|=1$ be such that $(f\delta)_{0}\not\equiv 0$;
such a $\delta$ always exists thanks to Remark~\ref{rkdelta}.
The fact that $(f\delta)^{s}\equiv f^{s}\equiv 0$
entails that $f\delta$ is a zero divisor and therefore we can apply the above reasoning obtaining 
$$f\delta=2(f\delta)_{0}\sigma,$$ 
for a suitable idempotent $\sigma$ and the thesis follows by multiplying both member of the last equality on the right  by $\delta^{c}$.
\end{proof}

\begin{remark}\label{relationidem}
We notice that the proof of the above proposition shows that formula~\eqref{zerodividem} can be written
as soon as $(f\delta)_{0}\not\equiv 0$. If $\delta$ and $\tilde\delta$ are unitary quaternions such that
$(f\delta)_{0}\not\equiv0$ and $(f\tilde\delta)_{0}\not\equiv0$, then we have 
$$
f=2(f\delta)_{0}\sigma\delta^{c}=2(f\tilde\delta)_{0}\tilde \sigma\tilde\delta^{c},
$$
for $\sigma$ and $\tilde \sigma$ suitable idempotents. Thus we can write 
\begin{equation*}\label{eqrelidem}
\tilde \sigma=\gamma \sigma\delta'=\sigma\gamma\delta',
\end{equation*}
where $\gamma=(f\tilde\delta)_{0}^{-1}(f\delta)_{0}\in\Sm_{\R}(\Omega)$ and $\delta'=\delta^{c}\tilde\delta$ is a unitary quaternion.
\end{remark}

\begin{remark}
Given $f\in\Sm(\Omega)$ a zero divisor and $\eta$ a unitary quaternion such that $(f\eta)_{0}\not\equiv 0$, from formula~\eqref{zerodividem}, we can also write
\begin{equation}\label{zerodividemright}
f=2(f\eta)_{0} \sigma\eta^{c}=2(f\eta)_{0} \eta^{c}\eta*\sigma*\eta^{c}=2(f\eta)_{0} \eta^{c}*\rho,
\end{equation}
where $\rho=\eta*\sigma*\eta^{c}$ is again an idempotent. 
\end{remark}

The proof of Proposition~\ref{propzerodividem} shows that if $f$ is a zero divisor with $f_{0}\not\equiv 0$, then
we can choose $\delta=1$ and therefore formula~\eqref{zerodividem} simplifies to $f=2f_{0}\sigma$.

\section{$\Sm_{\R}$-linear endomorphisms}\label{lineq}

The aim of this section is to study a class of $\Sm_{\R}(\Omega)$-linear operators in the space of slice semi-regular functions; they will be represented via suitable matrices in Section~\ref{representation}.
The class of linear operators we are interested in is described as follows.
\begin{definition}
Consider two $N$-tuples $\mathcal{F}:=(f_{[1]},\dots, f_{[N]})$ and $\mathcal{G}:=(g_{[1]},\dots,g_{[N]})\subset\Sm(\Omega)\setminus\{0\}$.
We denote by $\mathcal{L}_{\mathcal{F},\mathcal{G}}:\Sm(\Omega)\to\Sm(\Omega)$ the $\Sm_{\R}(\Omega)$-linear operator given by
\begin{equation}\label{elementary}
\mathcal{L}_{\mathcal{F},\mathcal{G}}(\chi):=f_{[1]}*\chi*g_{[1]}+\cdots f_{[N]}*\chi*g_{[N]}.
\end{equation}
\end{definition}
In particular the analysis of the image and the kernel of such operators will give complete information on the existence
and uniqueness of the solution of the equation
\begin{equation*}
f_{[1]}*\chi*g_{[1]}+\cdots f_{[N]}*\chi*g_{[N]}=\mathfrak{b},
\end{equation*}
for $\mathfrak{b}\in\Sm(\Omega)$.

Since $\Sm_{\R}(\Omega)$ is the center of $\Sm(\Omega)$, then $(\Sm_{\R}(\Omega)\setminus\{0\})^{N}$ acts on the $N$-tuples
$\mathcal{F}$ and $\mathcal{G}$ of semi-regular functions as follows: given $\alpha=(\alpha_{[1]},\dots,\alpha_{[N]})\in(\Sm_{\R}(\Omega)\setminus\{0\})^{N}$ we denote by $\alpha\blacklozenge\mathcal{F}=(\alpha_{[1]}f_{[1]},\dots,\alpha_{[N]}f_{[N]})$ and 
$\alpha\lozenge\mathcal{G}=(\alpha_{[1]}^{-1}g_{[1]},\dots,\alpha_{[N]}^{-1}g_{[N]})$. A straightforward computation
shows that $\mathcal{L}_{\mathcal{F},\mathcal{G}}=\mathcal{L}_{\alpha\blacklozenge\mathcal{F},\alpha\lozenge\mathcal{G}}$,
so that, when needed, we can suppose that $\mathcal{G}$ contains only regular functions without real and spherical zeroes.

We start our investigation from the easiest case $N=1$; to simplify notation we denote $\mathcal{L}_{\{f\},\{g\}}$ by $\mathcal{L}_{f,g}$.
Our first result classifies the functions $f$ and $g$ such that $\mathcal{L}_{f,g}$ is a real linear isomorphism
and gives explicitly the solution of $\mathcal{L}_{f,g}(\chi)=\mathfrak{b}$ in the case the operator is an isomorphism.

\begin{proposition}\label{isomorphism}
Let $f,g\in\Sm(\Omega)\setminus\{0\}$.
\begin{enumerate}
\item Provided $g\in\So(\Omega)$ has neither real nor spherical zeroes, then $\mathcal{L}_{f,g}$ maps $\So(\Omega)$ 
to $\So(\Omega)$ if and only if $f\in\So(\Omega)$.
\item The operator $\mathcal{L}_{f,g}$ is a real linear isomorphism if and only if  neither $f$ nor $g$ are zero divisors.
\item If $\mathcal{L}_{f,g}$ is an isomorphism, for any $\mathfrak{b}\in\Sm(\Omega)$ the equation $\mathcal{L}_{f,g}(\chi)=\mathfrak{b}$
has the unique solution $\chi=f^{-*}*\mathfrak{b}*g^{-*}$.
\item If $\mathcal{L}_{f,g}$ is an isomorphism, then
the solution of $\mathcal{L}_{f,g}(\chi)=\mathfrak{b}$ belongs to $\So(\Omega)$ for any $\mathfrak{b}\in\So(\Omega)$
if and only if $f$ and $g$ are never vanishing.
\end{enumerate}
\end{proposition}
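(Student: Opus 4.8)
The plan is to treat the four items essentially separately, extracting from each only the minimal zero/pole bookkeeping it needs, and to lean on Proposition~\ref{vectorspace} so that the linear-algebra half is purely finite-dimensional.

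For item~(2) I would factor the operator. Writing $L_{f}$ and $R_{g}$ for left and right $*$-multiplication by $f$ and $g$, associativity of the $*$-product gives $\mathcal{L}_{f,g}=L_{f}\circ R_{g}=R_{g}\circ L_{f}$, and all three are $\Sm_{\R}(\Omega)$-linear endomorphisms of the $4$-dimensional space $\Sm(\Omega)$. On a finite-dimensional vector space a composition of endomorphisms is an isomorphism if and only if both factors are, so it suffices to show that $L_{f}$ (resp.\ $R_{g}$) is an isomorphism precisely when $f$ (resp.\ $g$) is not a zero divisor. If $f$ is not a zero divisor then $f^{s}\not\equiv 0$, so $f$ is invertible and $L_{f^{-*}}$ inverts $L_{f}$; if $f$ is a zero divisor then $f^{s}=f*f^{c}\equiv 0$ with $f^{c}\neq 0$, whence $f^{c}\in\ker L_{f}$ and $L_{f}$ is not injective. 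The identity $g^{c}*g=g^{s}$ handles $R_{g}$ symmetrically. Item~(3) is then immediate: by~(2) both $f,g$ are invertible, the computation $f*(f^{-*}*\mathfrak{b}*g^{-*})*g=\mathfrak{b}$ exhibits a solution, and injectivity gives uniqueness. For item~(1), the backward implication is just that the $*$-product of regular functions is regular; for the forward one I would feed $\chi=1$ into the hypothesis to get $f*g\in\So(\Omega)$, note that a zero-free $g\in\So(\Omega)$ has $g^{-*}\in\So(\Omega)$, and conclude $f=(f*g)*g^{-*}\in\So(\Omega)$.

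Item~(4) is where I expect the main difficulty. By~(3) the solution operator is $\mathfrak{b}\mapsto f^{-*}*\mathfrak{b}*g^{-*}$. If $f$ and $g$ never vanish then $f^{-*}$ and $g^{-*}$ have no poles, i.e.\ lie in $\So(\Omega)$, and $f^{-*}*\mathfrak{b}*g^{-*}$ is regular whenever $\mathfrak{b}$ is; this is the backward direction. For the forward direction the clean move is again to test on $\mathfrak{b}=1$: the solution is $f^{-*}*g^{-*}=(g*f)^{-*}$, so the hypothesis forces $(g*f)^{-*}\in\So(\Omega)$, i.e.\ $g*f$ has no zeros. Recalling that symmetrization is multiplicative, $(g*f)^{s}=g^{s}f^{s}$, and that a semi-regular function vanishes on a sphere exactly where its symmetrization does, the zero-freeness of $g^{s}f^{s}$ yields that both $f^{s}$ and $g^{s}$ are zero-free, hence $f$ and $g$ never vanish.

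The genuine obstacle is the pole/zero accounting underlying~(1) and~(4): for semi-regular (not merely regular) functions on a product domain one must verify that the poles of $h^{-*}$ are exactly the zeros of $h$, and that $V(h^{s})=\bigcup_{w\in V(h)}\SF_{w}$. Both follow from $h^{-*}=(h^{s})^{-1}h^{c}$ together with multiplicativity of symmetrization, but the delicate point is the cancellation at a pole of $h$ between the zero of $(h^{s})^{-1}$ and the pole of $h^{c}$: this is precisely what makes \emph{zero-freeness} of $f$ and $g$ (rather than their regularity) the correct hypothesis in~(4). Once these facts are in place, each of the four items reduces to the short computations above.
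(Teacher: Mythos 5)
Your items (2) and (3) are correct, and (2) is only a mild repackaging of the paper's argument: the paper also works in the $4$-dimensional space of Proposition~\ref{vectorspace}, deducing from non-invertibility a kernel element $\chi\neq 0$ and then splitting $f*\chi*g\equiv 0$ into "$f*\chi\equiv0$ or $(f*\chi)*g\equiv0$", while you factor $\mathcal{L}_{f,g}=L_f\circ R_g$; both are fine, and (3) is identical to the paper's. The first genuine gap is in (1): you have replaced the hypothesis "$g$ has neither real nor spherical zeroes" by "$g$ is zero-free". These are very different: $g$ may have isolated non-real zeros, and at such a zero $w$ the symmetrization $g^s$ vanishes on the whole sphere $\SF_w$, so $g^{-*}=(g^s)^{-1}g^c$ has poles along $\SF_w$ and does \emph{not} lie in $\So(\Omega)$. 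Your identity $f=(f*g)*g^{-*}$ then only yields $f\in\Sm(\Omega)$, which is vacuous. What is actually needed, and what the paper uses, is the structure of singularities: a semi-regular $f$ can have poles only at real points or along whole spheres, and since $g$ vanishes at no real point and on no whole sphere, a real or spherical pole of $f$ would survive in $f*g=\mathcal{L}_{f,g}(1)$, contradicting $f*g\in\So(\Omega)$. That pole-persistence argument is the content of (1) and is absent from your proof.

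The second gap is in (4), and it is exactly where your closing paragraph claims there is none: the lemma "a never-vanishing semi-regular $h$ has $h^{-*}\in\So(\Omega)$" is false. Take $\Omega\supseteq\SF$ circular and $h=(q^2+1)^{-1}(q-i)*(q-j)$. The zero set of $(q-i)*(q-j)$ is $\{i\}$, the only non-pole point of $h$ on $\SF$ is the removable singularity at $i$, and on $\C_{i}$ one computes $h(z)=(z+i)^{-1}(z-j)$, which is nonzero even at $z=i$; hence $h$ never vanishes. Yet $h^s\equiv 1$, so $h^{-*}=h^c=(q^2+1)^{-1}(q+j)*(q+i)$, which has poles along $\SF$: zero-freeness of $h$ does not prevent the numerator from having two zeros on a pole sphere of order one, and this is precisely the cancellation failure you claimed zero-freeness rules out. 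This kills your backward direction, and the forward one breaks as well, since "$g^sf^s$ zero-free $\Rightarrow$ $f^s,g^s$ zero-free" fails when a pole cancels a zero: with $f=(q^2+1)^{-1}$ and $g=q^2+1$ (both central) one even gets $\mathcal{L}_{f,g}=\mathrm{id}$, so every solution is regular although $g$ vanishes on $\SF$ — showing that item (4), read literally over $\Sm(\Omega)$, cannot be proved as you state it. The paper's proof tacitly works with $f,g\in\So(\Omega)$: it tests $\mathfrak{b}=g$ (legitimate only for $g$ regular) to get $f^{-*}\in\So(\Omega)$, and for \emph{regular} functions zero-freeness of $h$ is indeed equivalent to $h^{-*}\in\So(\Omega)$. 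Under that restriction your "test $\mathfrak{b}=1$" variant would go through, but the justification must be these regular-function facts, not the semi-regular statements you invoke.
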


\begin{proof}
\textit{(1)} If $f\in\So(\Omega)$, then trivially $\mathcal{L}_{f,g}(\So(\Omega))\subseteq\So(\Omega)$. 
Vice versa, if $\mathcal{L}_{f,g}(\So(\Omega))\subseteq\So(\Omega)$, in particular we have that $\mathcal{L}_{f,g}(1)=f*g\in\So(\Omega)$. Since $g$ has neither real nor spherical zeroes, then $f$ has neither real nor spherical poles and therefore
$f\in\So(\Omega)$, too.

\textit{(2)} If $f$ is a zero divisor, then there exists $\chi_{f}\not\equiv 0$ such that $f*\chi_{f}\equiv 0$ and trivially
$\mathcal{L}_{f,g}(\chi_{f})=0$ so that $\mathcal{L}_{f,g}$ is not an isomorphism; the same holds for $g$. Vice versa, assume that $\mathcal{L}_{f,g}$
is not an isomorphism; then there exists $\chi\in\Sm(\Omega)\setminus\{0\}$ such that $\mathcal{L}_{f,g}(\chi)=f*\chi*g=0$.
If $f*\chi=0$, then $f$ is a zero divisor; otherwise the equality
$(f*\chi)*g=0$ gives that $g$ is a zero divisor.

\textit{(3)} Since $\mathcal{L}_{f,g}$ is an isomorphism, then $f$ and $g$ are not zero divisors and $f^{-*}$ and $g^{-*}$ belong to $\Sm(\Omega)$. A direct computation
shows that $\mathcal{L}_{f,g}(f^{-*}*\mathfrak{b}*g^{-*})=\mathfrak{b}$.

\textit{(4)} If $f,g\in\So(\Omega)$ are never vanishing, then \textit{(3)} shows that the unique solution of $\mathcal{L}_{f,g}(\chi)=\mathfrak{b}$
belongs to $\So(\Omega)$ for any $\mathfrak{b}\in\So(\Omega)$. Vice versa, if $f^{-*}*\mathfrak{b}*g^{-*}$ belongs to $\So(\Omega)$ 
for any $\mathfrak{b}\in\So(\Omega)$, by taking $\mathfrak{b}=g$ we obtain that $f^{-*}\in\So(\Omega)$, implying that 
$f$ has no zeroes; the same holds for $g$.
\end{proof}

Notice that if $\Omega$ is a slice domain, then
$\mathcal{L}_{f,g}$ is always an isomorphism thanks to \textit{(2)} of the above proposition.

In the case $\mathcal{L}_{f,g}$ is not an isomorphism we give a necessary and sufficient condition on the function $\mathfrak{b}$ 
in order it belongs to the image of $\mathcal{L}_{f,g}$.

\begin{theorem}\label{singularcase}
Let $f,g\in\Sm(\Omega)\setminus\{0\}$ be such that $\mathcal{L}_{f,g}$ is not an isomorphism.
If $f$ is a zero divisor, for a suitable unitary $\delta\in\HH$, we denote by $\sigma_f$ the idempotent given in formula~\eqref{zerodividem}.
If $g$ is a zero divisor, for a suitable unitary $\eta\in\HH$, we denote by $\rho_g$ the idempotent given in formula~\eqref{zerodividemright}. Then there exists $\chi$ such that $\mathcal{L}_{f,g}(\chi)=\mathfrak{b}$ if and only if $\mathfrak{b}=\sigma_f*\mathfrak{b}$, if 
$f$ is a zero divisor, and $\mathfrak{b}=\mathfrak{b}*\rho_g$, if $g$ is a zero divisor.
\end{theorem}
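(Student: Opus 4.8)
The plan is to regard $\mathcal{L}_{f,g}$ as the composition of the two commuting operators $L_f\colon\chi\mapsto f*\chi$ and $R_g\colon\chi\mapsto\chi*g$, so that $\mathcal{L}_{f,g}=L_f\circ R_g=R_g\circ L_f$ and $\operatorname{im}\mathcal{L}_{f,g}=L_f(\operatorname{im}R_g)$. By Proposition~\ref{isomorphism}(2), since $\mathcal{L}_{f,g}$ is not an isomorphism at least one of $f,g$ is a zero divisor, so the whole question reduces to identifying the images of left and right $*$-multiplication. I would recast the statement as the single clean assertion
$$\operatorname{im}\mathcal{L}_{f,g}=\operatorname{im}L_f\cap\operatorname{im}R_g,$$
together with the identifications $\operatorname{im}L_f=\{\mathfrak{b}:\sigma_f*\mathfrak{b}=\mathfrak{b}\}$ when $f$ is a zero divisor and $\operatorname{im}R_g=\{\mathfrak{b}:\mathfrak{b}*\rho_g=\mathfrak{b}\}$ when $g$ is a zero divisor; when $f$ (resp.\ $g$) is not a zero divisor the corresponding operator is bijective (Proposition~\ref{isomorphism}(2) with the other entry equal to $1$) and imposes no condition.

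The key lemma is the description of $\operatorname{im}L_f$ for a zero divisor $f$. Writing $f=2(f\delta)_0\,\sigma_f*\delta^c$ as in~\eqref{zerodividem}, the scalar $2(f\delta)_0\in\Sm_{\R}(\Omega)$ is invertible (nonzero by the choice of $\delta$, and $\Sm_{\R}(\Omega)$ is a field) and $\delta^c$ is an invertible constant, so both $\chi\mapsto 2(f\delta)_0\,\chi$ and left $*$-multiplication by $\delta^c$ are bijections. Thus $L_f$ differs from $\chi\mapsto\sigma_f*\chi$ only by pre- and post-composition with bijections, and since $\sigma_f$ is idempotent the latter is the projection onto $\{\mathfrak{b}:\sigma_f*\mathfrak{b}=\mathfrak{b}\}$. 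Concretely, one inclusion follows from $\sigma_f*(f*\chi)=f*\chi$, using $\sigma_f*\sigma_f=\sigma_f$ and the centrality of $2(f\delta)_0$; for the reverse, given $\sigma_f*\mathfrak{b}=\mathfrak{b}$ one checks that $\chi:=\delta*\big(2(f\delta)_0\big)^{-1}\mathfrak{b}$ satisfies $f*\chi=\mathfrak{b}$, using $\delta^c*\delta=1$. The description of $\operatorname{im}R_g$ is entirely symmetric, starting from the factorization $g=2(g\eta)_0\,\eta^c*\rho_g$ in~\eqref{zerodividemright}.

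It remains to establish $\operatorname{im}\mathcal{L}_{f,g}=\operatorname{im}L_f\cap\operatorname{im}R_g$. The inclusion $\subseteq$ is immediate, since $\operatorname{im}(L_f\circ R_g)\subseteq\operatorname{im}L_f$ and, by commutativity, $\operatorname{im}(L_f\circ R_g)=\operatorname{im}(R_g\circ L_f)\subseteq\operatorname{im}R_g$; this already yields the necessity of the conditions $\sigma_f*\mathfrak{b}=\mathfrak{b}$ and $\mathfrak{b}*\rho_g=\mathfrak{b}$. For the reverse inclusion, the real content of the theorem, I would take $\mathfrak{b}\in\operatorname{im}L_f\cap\operatorname{im}R_g$ and choose $\psi$ with $f*\psi=\mathfrak{b}$, the point being to replace $\psi$ by an element of $\operatorname{im}R_g$ without spoiling the equation. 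When $g$ is a zero divisor this is achieved by setting $\phi:=\psi*\rho_g$: then $\phi*\rho_g=\phi$, so $\phi\in\operatorname{im}R_g$, while $f*\phi=(f*\psi)*\rho_g=\mathfrak{b}*\rho_g=\mathfrak{b}$ because $\mathfrak{b}\in\operatorname{im}R_g$; hence $\mathfrak{b}=f*\phi\in L_f(\operatorname{im}R_g)=\operatorname{im}\mathcal{L}_{f,g}$. When $g$ is not a zero divisor one has $\operatorname{im}R_g=\Sm(\Omega)$ and $\psi$ itself works. Assembling the three cases (both zero divisors, only $f$, only $g$) produces exactly the asserted characterization.

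I expect the main obstacle to be precisely this last idempotent projection step: one must produce a single preimage lying simultaneously in $\operatorname{im}L_f$ and in $\operatorname{im}R_g$, and the naive attempt of solving $f*\psi=\mathfrak{b}$ first and hoping $\psi\in\operatorname{im}R_g$ need not succeed because of $\ker L_f$. The substitution $\psi\mapsto\psi*\rho_g$ repairs this only because $\rho_g$ is idempotent and $\mathfrak{b}$ already satisfies $\mathfrak{b}*\rho_g=\mathfrak{b}$, which is where the hypothesis on $\mathfrak{b}$ enters essentially. A secondary, more routine point is the bookkeeping with the unit quaternions $\delta,\eta$ and the central factors $2(f\delta)_0,2(g\eta)_0$ when verifying the explicit preimages, together with the existence (via Remark~\ref{rkdelta}) of suitable $\delta,\eta$ making $(f\delta)_0\not\equiv0$ and $(g\eta)_0\not\equiv0$.
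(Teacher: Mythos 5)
Your proof is correct, and it takes a genuinely different route from the paper's, even though the two share the same raw ingredients (the factorizations~\eqref{zerodividem} and~\eqref{zerodividemright}, centrality of $2(f\delta)_0$ and $2(g\eta)_0$, and idempotency). The paper proves necessity through the regular conjugate: from $f*\chi*g=\mathfrak{b}$ and $f^s\equiv 0$ it gets $f^{c}*\mathfrak{b}=f^{s}\chi*g\equiv 0$, then converts this into $\sigma_f^{c}*\mathfrak{b}\equiv 0$, i.e.\ $\mathfrak{b}=\sigma_f*\mathfrak{b}$; you reach the same conclusion without ever invoking $f^{c}$ or $f^{s}$, using only the absorption identities $\sigma_f*f=f$ and $g*\rho_g=g$. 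For sufficiency the paper simply exhibits the solution, namely $\chi=(2(f\delta)_0)^{-1}\delta*\mathfrak{b}*g^{-*}$ when only $f$ is a zero divisor and $\chi=\left[(2(f\delta)_0)^{-1}\delta\right]*\mathfrak{b}*\left[(2(g\eta)_0)^{-1}\eta\right]$ when both are, verifying it by a chain of equalities; you instead factor $\mathcal{L}_{f,g}$ into the commuting operators $L_f$ and $R_g$, prove $\operatorname{im}\mathcal{L}_{f,g}=\operatorname{im}L_f\cap\operatorname{im}R_g$, identify each factor's image as the range of the corresponding idempotent projection, and merge the two constraints via the correction $\psi\mapsto\psi*\rho_g$. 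Unwinding your construction (using $\mathfrak{b}*\rho_g=\mathfrak{b}$ and centrality of the scalar factors) lands exactly on the paper's preimages, so the computational content coincides; what your organization buys is a reusable intermediate lemma (the image of left $*$-multiplication by a zero divisor equals $\{\mathfrak{b}\,:\,\sigma_f*\mathfrak{b}=\mathfrak{b}\}$), a symmetric formulation, and a transparent reason why the two conditions are jointly sufficient, while the paper's version buys brevity and explicit solution formulas it can reuse later. Your closing remark also correctly isolates the one genuine subtlety: a preimage under $L_f$ need not lie in $\operatorname{im}R_g$, and it is precisely the idempotency of $\rho_g$ together with the hypothesis $\mathfrak{b}*\rho_g=\mathfrak{b}$ that repairs this.
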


%
%

\begin{remark}
The relation $\mathfrak{b}=\sigma_f*\mathfrak{b}$ can also be written as $(1-\sigma_f)*\mathfrak{b}\equiv 0$ that is $\sigma_f^{c}*\mathfrak{b}\equiv 0$. Moreover,
thanks to Remark~\ref{relationidem}, this condition does not depend on the unitary quaternion $\delta$ appearing in 
formula~\eqref{zerodividem}. Indeed, if $\tilde \sigma_f$ is another such idempotent, we know that $\tilde \sigma_f= \sigma_f\gamma \delta'$
for a suitable $\gamma\in\Sm_{\R}(\Omega)\setminus\{0\}$ and $\delta'$ unitary quaternion, so that $\sigma_f^{c}*\mathfrak{b}\equiv 0$ and $\tilde \sigma_f^{c}*\mathfrak{b}\equiv 0$ are equivalent conditions.
\end{remark}

\begin{proof}[Proof of Theorem~\ref{singularcase}]
If $f$ is a zero divisor and there exists $\chi$ such that $\mathcal{L}_{f,g}(\chi)=\mathfrak{b}$, then $f*\chi*g=\mathfrak{b}$ and thus $f^{c}*\mathfrak{b}=f^{c}*f*\chi*g=f^{s}\chi*g\equiv 0$. 
Now write $f=2(f\delta)_{0}\sigma_f\delta^{c}$ for a suitable unitary quaternion $\delta$ and idempotent $\sigma_f$.
The equality $f^{c}*\mathfrak{b}=2(f\delta)_{0}\delta \sigma_f^{c}*\mathfrak{b}\equiv0$ implies $\sigma_f^{c}*\mathfrak{b}\equiv0$. As $\sigma_f^{c}=1-\sigma_f$ we obtain
$\mathfrak{b}=\sigma_f*\mathfrak{b}$.
Analogous considerations hold if $g$ is a zero divisor, showing that $\mathfrak{b}=\mathfrak{b}*\rho_g$.

Vice versa if $f$ is a zero divisor, $\mathfrak{b}=\sigma_f*\mathfrak{b}$ and $g$ is not a zero divisor, we have the following chain of equalities
\begin{align*}
\mathfrak{b}&=\sigma_f*\mathfrak{b}=\left[2(f\delta)_{0}\sigma_f\delta^{c}((2(f\delta)_{0})^{-1}\delta\right]*\mathfrak{b}*g^{-*}*g\\
&=f*\left[(2(f\delta)_{0})^{-1}\delta*\mathfrak{b}*g^{-*}\right]*g=\mathcal{L}_{f,g}((2(f\delta)_{0})^{-1}\delta*\mathfrak{b}*g^{-*}),
\end{align*}
which shows that $\mathcal{L}_{f,g}(\chi)=\mathfrak{b}$ admits a solution. If $f$ is not a zero divisor, $g$ is a zero divisor and 
$\mathfrak{b}=\mathfrak{b}*\rho_g$, the thesis follows by reasoning as before. 

If both $f$ and $g$ are zero divisors, $\mathfrak{b}=\sigma_f*\mathfrak{b}=\mathfrak{b}*\rho_g$, writing $f=2(f\delta)_{0}\sigma_f\delta^{c}$ and 
$g=2(g\eta)_{0} \eta^{c}*\rho_g$, the following chain of equalities yields the thesis
\begin{align*}
\mathfrak{b}&=\sigma_f*\mathfrak{b}=\left[2(f\delta)_{0}\sigma_f\delta^{c}((2(f\delta)_{0})^{-1}\delta\right]*\mathfrak{b}=f*\left[(2(f\delta)_{0})^{-1}\delta\right]*\mathfrak{b}\\
&=f*\left[(2(f\delta)_{0})^{-1}\delta\right]*\mathfrak{b}*\rho_g= f*\left[(2(f\delta)_{0})^{-1}\delta\right]*\mathfrak{b}*\left[(2(g\eta)_{0})^{-1}\eta*(g\eta)_{0}\eta^{c}\right]*\rho_g\\
&=f*\left[(2(f\delta)_{0})^{-1}\delta\right]*\mathfrak{b}*\left[(2(g\eta)_{0})^{-1}\eta\right]*g=\mathcal{L}_{f,g}\left(\left[(2(f\delta)_{0})^{-1}\delta\right]*\mathfrak{b}*\left[(2(g\eta)_{0})^{-1}\eta\right]\right).
\end{align*}
\end{proof}

We now describe the kernel of $\mathcal{L}_{f,g}$ when the operator is not an isomorphism.

\begin{proposition}\label{kernelfg}
Let $f,g\in\Sm(\Omega)\setminus\{0\}$ be such that $\mathcal{L}_{f,g}$ is not an isomorphism.
If $f$ is a zero divisor, for a suitable unitary $\eta\in\HH$, we denote by $\rho_{f}$ the idempotent given in formula~\eqref{zerodividemright}.
If $g$ is a zero divisor, for a suitable unitary $\delta\in\HH$, we denote by $\sigma_{g}$ the idempotent given in formula~\eqref{zerodividem}. 
Then $\chi\in\ker (\mathcal{L}_{f,g})$ if and only if 
\begin{enumerate}
\item $\rho_{f}*\chi\equiv 0$, if $f$ is a zero divisor and $g$ is not a zero divisor;
\item $\chi*\sigma_{g}\equiv 0$, if $g$ is a zero divisor and $f$ is not a zero divisor;
\item $\rho_{f}*\chi*\sigma_{g}\equiv 0$ if both $f$ and $g$ are zero divisors.
\end{enumerate}
\end{proposition}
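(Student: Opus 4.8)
The plan is to handle all three cases by substituting the factorizations of $f$ and $g$ furnished by formulas~\eqref{zerodividem} and~\eqref{zerodividemright} into the kernel equation $f*\chi*g\equiv 0$ and then cancelling the invertible factors that appear. Two elementary facts drive the whole argument. First, the coefficients $2(f\eta)_0$ and $2(g\delta)_0$ are \emph{nonzero} elements of $\Sm_\R(\Omega)$, which is the center of $\Sm(\Omega)$ and a field; hence they commute with every element of $\Sm(\Omega)$ under $*$ and may be cancelled from any equation they multiply. Second, for a unit constant quaternion $\eta$ one has $\eta^c*\eta=\eta*\eta^c=|\eta|^2=1$, so left (and right) $*$-multiplication by $\eta^c$ is a bijection of $\Sm(\Omega)$ whose inverse is the corresponding $*$-multiplication by $\eta$; the same applies to $\delta$. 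Thus none of the factors $2(f\eta)_0$, $2(g\delta)_0$, $\eta^c$, $\delta^c$ can obstruct an equality from being zero.

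For case~(1), since $g$ is not a zero divisor $g^{-*}\in\Sm(\Omega)$ exists, so right $*$-multiplication by $g^{-*}$ shows $f*\chi*g\equiv 0$ is equivalent to $f*\chi\equiv 0$. Substituting $f=2(f\eta)_0\,\eta^c*\rho_f$ from~\eqref{zerodividemright} and using associativity of $*$ gives $f*\chi=2(f\eta)_0\,\eta^c*\rho_f*\chi$; cancelling the nonzero central scalar and the invertible factor $\eta^c*$ leaves exactly $\rho_f*\chi\equiv 0$. Case~(2) is the mirror image: $f$ not being a zero divisor lets us left $*$-multiply by $f^{-*}$ and reduce to $\chi*g\equiv 0$, whereupon substituting $g=2(g\delta)_0\,\sigma_g\delta^c$ from~\eqref{zerodividem} and cancelling $2(g\delta)_0$ together with the invertible right factor $\delta^c$ yields $\chi*\sigma_g\equiv 0$.

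For case~(3) neither $f$ nor $g$ is invertible, so I cannot reduce the equation beforehand; instead I substitute both factorizations simultaneously. Writing $f=2(f\eta)_0\,\eta^c*\rho_f$ and $g=2(g\delta)_0\,\sigma_g\delta^c$ and moving the central scalars to the front, associativity gives
\[
f*\chi*g=2(f\eta)_0\,2(g\delta)_0\;\bigl(\eta^c*\rho_f*\chi*\sigma_g*\delta^c\bigr).
\]
The scalar factor $2(f\eta)_0\,2(g\delta)_0$ is nonzero and central, while left $*$-multiplication by $\eta^c$ and right $*$-multiplication by $\delta^c$ are both bijective; hence the right-hand side vanishes if and only if $\rho_f*\chi*\sigma_g\equiv 0$, which is the claim. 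The only point requiring care is bookkeeping: one must repeatedly invoke associativity of the $*$-product, the fact that constant quaternions $*$-multiply exactly as they do in $\HH$, and the centrality of $\Sm_\R(\Omega)$ in order to slide the scalar coefficients past everything else. No deeper obstacle arises, since in each case the reduction to the stated condition is a genuine biconditional obtained by cancelling bijective maps on both sides of the equation.
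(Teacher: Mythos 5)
Your proof is correct and takes essentially the same approach as the paper's: reduce to the one-sided equation by exploiting that a non-zero-divisor in $\Sm(\Omega)\setminus\{0\}$ is $*$-invertible, substitute the factorizations from formulas~\eqref{zerodividem} and~\eqref{zerodividemright}, and cancel the nonzero central scalars together with the unit quaternion factors. The only cosmetic difference is that in case~(1) the paper states the equivalence $f*\chi*g\equiv 0 \Leftrightarrow f*\chi\equiv 0$ directly from the zero-divisor characterization, whereas you realize it by explicit right $*$-multiplication by $g^{-*}$; these are the same underlying fact.
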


\begin{proof}
\textit{(1)} As $g$ is not a zero divisor, then $\chi\in\ker(\mathcal{L}_{f,g})$ if and only if $f*\chi\equiv0$.
Choose a unitary quaternion $\eta$ such that $(f\eta)_{0}\not\equiv0$ and write
$f=2(f\eta)_{0}\eta^{c}*\rho_{f}$ as given in formula~\eqref{zerodividemright}. Now $f*\chi=2(f\eta)_{0}\eta^{c}*\rho_{f}*\chi\equiv 0$ is equivalent to $\rho_{f}*\chi\equiv 0$  since $(f\eta)_{0}\in\Sm_{\R}(\Omega)\setminus\{0\}$ and $\eta\neq0$.

\textit{(2)} This second case is obtained as in \textit{(1)} by using formula~\eqref{zerodividem}.

\textit{(3)} By definition $\chi\in\ker(\mathcal{L}_{f,g})$ if and only if $f*\chi*g\equiv0$. 
Choose two unitary quaternion $\delta$ and $\eta$ such that $(f\eta)_{0}\not\equiv0$, $(g\delta)_{0}\not\equiv 0$ and write
$f=2(f\eta)_{0}\eta^{c}*\rho_{f}$, as given in formula~\eqref{zerodividemright}, and $g=2(g\delta)_{0}\sigma_{g}\delta^{c}$ as in formula~\eqref{zerodividem}. Now $f*\chi*g=4(f\eta)_{0}(g\delta)_{0}\eta^{c}*\rho_{f}*\chi*\sigma_{g}\delta^{c}\equiv 0$ is equivalent to $\rho_{f}*\chi*\sigma_{g}\equiv 0$ since $(f\eta)_{0},(g\delta)_{0}\in\Sm_{\R}(\Omega)\setminus\{0\}$ and $\eta,\delta\neq0$.
\end{proof}


\section{Matrix representation of $\mathcal{L}_{\mathcal{F},\mathcal{G}}$-type equations}\label{representation}

The techniques used in the previous section to study the case $N=1$ are not powerful enough even to study the next step $N=2$.
To tackle the general case we need to represent the linear equations we are dealing with by means of square matrices in the 
same spirit of~\cite{janovska}.

Since we want to use coordinates for $\Sm(\Omega)$ over $\Sm_{\R}(\Omega)$, from now on we choose an orthonormal basis
$\mathcal{B}:=(1,I,J,K)$ of $\HH$ (which by Proposition~\ref{vectorspace} is a basis for $\Sm(\Omega)$ over $\Sm_{\R}(\Omega)$, too). Given
$f=f_{0}+f_{1}I+f_{2}J+f_{3}K$, we will denote by $F_{\mathcal{B}}:\Sm(\Omega)\to(\Sm_{\R}(\Omega))^{4}$ the usual
coordinates isomorphism 
$$
F_{\mathcal{B}}(f)=\begin{vmatrix}
f_{0}\\
f_{1} \\
f_{2}\\
f_{3}\\
\end{vmatrix}.
$$

\begin{definition}
For any $f=f_{0}+f_{1}I+f_{2}J+f_{3}K\in\Sm(\Omega)$ we define 
\begin{equation*}\label{matrices}
\imath_{L}(f):=\begin{vmatrix}
f_{0} & -f_{1} &-f_{2}&-f_{3}\\
f_{1} &f_{0}&-f_{3}&f_{2}\\
f_{2}&f_{3}&f_{0}&-f_{1}\\
f_{3}&-f_{2}&f_{1}&f_{0}
\end{vmatrix},\qquad\imath_{R}(f):=\begin{vmatrix}
f_{0} & -f_{1} &-f_{2}&-f_{3}\\
f_{1} &f_{0}&f_{3}&-f_{2}\\
f_{2}&-f_{3}&f_{0}&f_{1}\\
f_{3}&f_{2}&-f_{1}&f_{0}
\end{vmatrix}.
\end{equation*}

\end{definition}

\begin{lemma}
For any $f,g,h\in\Sm(\Omega)$, the following equalities hold.
\begin{align}
\imath_{R}(f*g)&=\imath_{R}(g)\imath_{R}(f),\nonumber\\
\imath_{L}(f)\imath_{R}(g)&=\imath_{R}(g)\imath_{L}(f),\nonumber\\
F_{\mathcal{B}}(f*g)&=\imath_{L}(f)F_{\mathcal{B}}(g)=\imath_{R}(g)F_{\mathcal{B}}(f),\nonumber\\
F_{\mathcal{B}}(f*g*h)&=\imath_{L}(f)\imath_{L}(g)F_{\mathcal{B}}(h)=\imath_{R}(h)\imath_{R}(g)F_{\mathcal{B}}(h),\nonumber\\
F_{\mathcal{B}}(f*g*h)&=\imath_{L}(f)\imath_{R}(h)F_{\mathcal{B}}(g)=\imath_{R}(h)\imath_{L}(f)F_{\mathcal{B}}(g),\label{propiLiR}\\
det(\imath_{L}(f))=det(\imath_{R}(f))&=(f^{s})^{2}.\label{determinantfs}
\end{align}
\end{lemma}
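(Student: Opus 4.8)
The plan is to recognize $\imath_{L}(f)$ and $\imath_{R}(g)$ as the matrices of left and right $*$-multiplication, i.e.\ as the left and right regular representations of $\Sm(\Omega)$ regarded as the quaternion algebra over the field $\Sm_{\R}(\Omega)$. The starting observation is that for two nonzero \emph{constant} quaternions $a,b$ the defining formula of the $*$-product reduces to $(a*b)(q)=a\,b(a^{-1}qa)=ab$, the ordinary quaternionic product. Since by Proposition~\ref{vectorspace} every $f\in\Sm(\Omega)$ writes uniquely as $f=f_{0}+f_{1}I+f_{2}J+f_{3}K$ with $f_{\ell}\in\Sm_{\R}(\Omega)$ central, the $*$-product is $\Sm_{\R}(\Omega)$-bilinear, whence
$$
f*g=\sum_{\ell,m}f_{\ell}g_{m}\,(e_{\ell}e_{m}),\qquad (e_{0},e_{1},e_{2},e_{3})=(1,I,J,K),
$$
where each $e_{\ell}e_{m}$ is computed through the quaternionic multiplication table of the orthonormal basis $(1,I,J,K)$. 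Thus the whole lemma is an instance of the linear algebra of this multiplication table, with entries in the commutative field $\Sm_{\R}(\Omega)$.

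First I would prove the third identity, the foundational one from which the others descend. It suffices to check that, for $\ell=0,1,2,3$, the $\ell$-th column of $\imath_{L}(f)$ equals $F_{\mathcal{B}}(f*e_{\ell})$ and the $\ell$-th column of $\imath_{R}(g)$ equals $F_{\mathcal{B}}(e_{\ell}*g)$; both are routine verifications on the multiplication table (for instance $f*I=-f_{1}+f_{0}I+f_{3}J-f_{2}K$ reproduces the second column of $\imath_{L}(f)$). Once the columns are identified, $\Sm_{\R}(\Omega)$-linearity gives $\imath_{L}(f)F_{\mathcal{B}}(g)=F_{\mathcal{B}}(f*g)=\imath_{R}(g)F_{\mathcal{B}}(f)$ for all $f,g$.

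The remaining multiplicative identities are then purely formal consequences of associativity, read off on an arbitrary $\chi$. Evaluating $F_{\mathcal{B}}(\chi*f*g)$ in two ways via the third identity yields $\imath_{R}(f*g)=\imath_{R}(g)\imath_{R}(f)$, so $\imath_{R}$ is an anti-homomorphism; the same computation on $F_{\mathcal{B}}((f*g)*\chi)$ shows $\imath_{L}$ is multiplicative, $\imath_{L}(f*g)=\imath_{L}(f)\imath_{L}(g)$. Likewise $F_{\mathcal{B}}(f*\chi*g)=\imath_{L}(f)\imath_{R}(g)F_{\mathcal{B}}(\chi)=\imath_{R}(g)\imath_{L}(f)F_{\mathcal{B}}(\chi)$ proves the commutation $\imath_{L}(f)\imath_{R}(g)=\imath_{R}(g)\imath_{L}(f)$; in each case, equality of the images for all $\chi$ forces equality of the matrices. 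The two displayed formulas for $F_{\mathcal{B}}(f*g*h)$ then follow by applying the third identity twice and, where needed, rearranging the order of $\imath_{L}$ and $\imath_{R}$ via the first two identities.

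The determinant formula~\eqref{determinantfs} is where the only genuine computation sits, and the efficient route avoids expanding a $4\times4$ determinant directly. Inspection of the explicit matrices gives $\imath_{L}(f)^{T}=\imath_{L}(f^{c})$ and $\imath_{R}(f)^{T}=\imath_{R}(f^{c})$, with $f^{c}=f_{0}-f_{1}I-f_{2}J-f_{3}K$. Since $f*f^{c}=f^{s}\in\Sm_{\R}(\Omega)$ is central, one has $\imath_{L}(f^{s})=f^{s}\,\mathrm{Id}_{4}$, so multiplicativity of $\imath_{L}$ yields
$$
\imath_{L}(f)\,\imath_{L}(f)^{T}=\imath_{L}(f)\,\imath_{L}(f^{c})=\imath_{L}(f*f^{c})=f^{s}\,\mathrm{Id}_{4}.
$$
Taking determinants and using $\det\imath_{L}(f)^{T}=\det\imath_{L}(f)$ gives $(\det\imath_{L}(f))^{2}=(f^{s})^{4}$, hence $\det\imath_{L}(f)=\pm(f^{s})^{2}$; the sign is constant in $f$ (the determinant being a polynomial in the $f_{\ell}$) and is fixed to $+$ by the specialization $f\equiv1$, where the matrix is the identity. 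The identical argument with $\imath_{R}(f^{c})\imath_{R}(f)=\imath_{R}(f*f^{c})=f^{s}\,\mathrm{Id}_{4}$ (now using that $\imath_{R}$ is an anti-homomorphism) gives $\det\imath_{R}(f)=(f^{s})^{2}$. I expect the only point demanding care to be this determinant step, specifically the factorization through $f^{s}$ and the sign normalization; everything else is bookkeeping on the multiplication table and on associativity.
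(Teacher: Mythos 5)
Your proof is correct, and it takes a genuinely different route from the paper's: the paper disposes of this lemma in a single line (``The proof of all equalities can be performed by direct inspection''), i.e.\ by brute-force verification of each identity, including the expansion of the two $4\times 4$ determinants. You instead exploit the algebra structure. Once the columns of $\imath_{L}(f)$ and $\imath_{R}(g)$ are identified with $F_{\mathcal{B}}(f*e_{\ell})$ and $F_{\mathcal{B}}(e_{\ell}*g)$ for the basis elements $e_{\ell}\in\{1,I,J,K\}$ (using that the $*$-product of elements of $\HH$ is the quaternionic product and that the $*$-product is $\Sm_{\R}(\Omega)$-bilinear), all the multiplicative and commutation identities follow formally from associativity, by evaluating on every $\chi$ and comparing columns; and the determinant formula drops out of the norm identity $\imath_{L}(f)\,\imath_{L}(f)^{T}=\imath_{L}(f)\,\imath_{L}(f^{c})=\imath_{L}(f*f^{c})=f^{s}\,\mathrm{Id}_{4}$. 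Your sign argument is sound: $\det\imath_{L}(f)$ and $(f^{s})^{2}$ are values of fixed integer polynomials in $(f_{0},f_{1},f_{2},f_{3})$, the relation $(P-Q)(P+Q)=0$ in a polynomial ring over a domain (equivalently, among polynomial functions on $(\Sm_{\R}(\Omega))^{4}$, with $\Sm_{\R}(\Omega)$ an infinite field) forces one factor to vanish identically, and evaluation at $f\equiv 1$ selects the $+$ sign. What each approach buys: the paper's is shortest to state but hides a sizable computation, in particular the quartic determinant expansion; yours isolates the only genuine content (the column identification and the conjugate-transpose identities $\imath_{L}(f)^{T}=\imath_{L}(f^{c})$, $\imath_{R}(f)^{T}=\imath_{R}(f^{c})$) and would work verbatim for the regular representations of any quaternion algebra over a commutative field. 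A side benefit of your systematic derivation: it produces the fourth identity in the correct form $F_{\mathcal{B}}(f*g*h)=\imath_{R}(h)\imath_{R}(g)F_{\mathcal{B}}(f)$, exposing an evident typo in the paper's display, where the last factor is misprinted as $F_{\mathcal{B}}(h)$.
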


\begin{proof}
The proof of all equalities can be performed by direct inspection.
\end{proof}
Thanks to formula~\eqref{propiLiR},
for any two $N$-tuples $\mathcal{F}=(f_{[1]},\dots,f_{[N]}),\mathcal{G}=(g_{[1]},\dots,g_{[N]})\subset\Sm(\Omega)\setminus\{0\}$,
the linear operator $\mathcal{L}_{\mathcal{F},\mathcal{G}}$ given in formula~\eqref{elementary} can be written as
$$
F_{\mathcal{B}}(\mathcal{L}_{\mathcal{F},\mathcal{G}})(\chi)=\left(\sum_{n=1}^{N}\imath_{L}(f_{[n]})\imath_{R}(g_{[n]})\right)F_{\mathcal{B}}(\chi),
$$
and since $F_{\mathcal{B}}$ is an isomorphism, the solvability of $\mathcal{L}_{\mathcal{F},\mathcal{G}}(\chi)=\mathfrak{b}$
is equivalent to the solvability of $F_{\mathcal{B}}(\mathcal{L}_{\mathcal{F},\mathcal{G}})(\chi)=F_{\mathcal{B}}(\mathfrak{b})$.
This interpretation allows us to characterize the cases in which the operator $\mathcal{L}_{\mathcal{F},\mathcal{G}}$ 
is an isomorphism. 
\begin{proposition}\label{determinant}
The linear operator $\mathcal{L}_{\mathcal{F},\mathcal{G}}$ is an isomorphism if and only if 
$$
\det\left(\sum_{n=1}^{N}\imath_{L}(f_{[n]})\imath_{R}(g_{[n]})\right)\not\equiv 0.
$$
\end{proposition}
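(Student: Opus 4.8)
The plan is to transport the entire question to the coordinate space by means of the isomorphism $F_{\mathcal{B}}$ and then apply the classical determinantal criterion for invertibility of a square matrix over a field. The crucial structural fact, already available to us, is that $\Sm_{\R}(\Omega)$ is a field, so that by Proposition~\ref{vectorspace} the space $\Sm(\Omega)$ is a genuine $4$-dimensional vector space over $\Sm_{\R}(\Omega)$ and ordinary linear algebra applies without modification.

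First I would record that, by the displayed formula preceding the statement (itself a consequence of~\eqref{propiLiR}), the identity
$$F_{\mathcal{B}}\bigl(\mathcal{L}_{\mathcal{F},\mathcal{G}}(\chi)\bigr)=\Bigl(\sum_{n=1}^{N}\imath_{L}(f_{[n]})\imath_{R}(g_{[n]})\Bigr)F_{\mathcal{B}}(\chi)$$
holds for every $\chi\in\Sm(\Omega)$. Writing $M:=\sum_{n=1}^{N}\imath_{L}(f_{[n]})\imath_{R}(g_{[n]})$, this says exactly that $F_{\mathcal{B}}\circ\mathcal{L}_{\mathcal{F},\mathcal{G}}=\mu_{M}\circ F_{\mathcal{B}}$, where $\mu_{M}$ denotes left multiplication by $M$ on $(\Sm_{\R}(\Omega))^{4}$. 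Note that the entries of each $\imath_{L}(f_{[n]})$ and $\imath_{R}(g_{[n]})$ are, by their definition, the components of $f_{[n]}$ and $g_{[n]}$ in the basis $\mathcal{B}$, all of which belong to $\Sm_{\R}(\Omega)$; hence $M$ is a genuine $4\times4$ matrix over the field $\Sm_{\R}(\Omega)$ and $\mu_{M}$ is $\Sm_{\R}(\Omega)$-linear.

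Since $F_{\mathcal{B}}$ is a $\Sm_{\R}(\Omega)$-linear isomorphism, the conjugation identity shows that $\mathcal{L}_{\mathcal{F},\mathcal{G}}$ is an isomorphism of $\Sm(\Omega)$ if and only if $\mu_{M}$ is an isomorphism of $(\Sm_{\R}(\Omega))^{4}$. At this point the problem is pure linear algebra over the field $\Sm_{\R}(\Omega)$: an endomorphism of a finite-dimensional vector space represented by a square matrix $M$ is bijective precisely when $M$ is invertible, i.e. when $\det M$ is a nonzero element of the field. As the zero element of $\Sm_{\R}(\Omega)$ is the identically vanishing function, ``nonzero'' here means $\det M\not\equiv0$, which is the asserted condition.

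I do not expect a genuine obstacle here; the one point requiring care is that the determinant criterion is being applied over $\Sm_{\R}(\Omega)$ rather than over $\R$ or $\HH$, so one must be sure that invertibility of $M$ is truly equivalent to $\det M\not\equiv0$ in this setting. This is guaranteed precisely because $\Sm_{\R}(\Omega)$ is a field (as established from~\cite[Theorem 6.6]{GPSdivision}) and the matrices $\imath_{L},\imath_{R}$ have all their entries in it; no analytic information about the zeros or poles of the individual $f_{[n]},g_{[n]}$ is needed beyond this.
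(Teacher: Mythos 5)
Your proposal is correct and follows exactly the route the paper takes: the paper proves this proposition implicitly via the displayed identity $F_{\mathcal{B}}(\mathcal{L}_{\mathcal{F},\mathcal{G}})(\chi)=\bigl(\sum_{n=1}^{N}\imath_{L}(f_{[n]})\imath_{R}(g_{[n]})\bigr)F_{\mathcal{B}}(\chi)$ preceding the statement, combined with the fact that $\Sm_{\R}(\Omega)$ is a field so the standard determinant criterion applies. Your write-up merely makes explicit the two points the paper leaves tacit (conjugation by the isomorphism $F_{\mathcal{B}}$ and the identification of ``nonzero in the field'' with ``not identically vanishing''), which is exactly the intended argument.
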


\begin{remark}
Last proposition gives a more algebraic interpretation of Proposition~\ref{isomorphism} \textit{(2)}. Indeed, when $N=1$
we have that $\mathcal{L}_{f,g}$ is an isomorphism if and only if $\det(\imath_{L}(f)\imath_{R}(g))=\det(\imath_{L}(f))\det(\imath_{R}(g))\not\equiv 0$. Thanks to formula~\eqref{determinantfs}, we have that
$$
\det(\imath_{L}(f))\det(\imath_{R}(g))=(f^{s})^{2}(g^{s})^{2},
$$
and the second term is identically zero if and only if either $f^{s}$ or $g^{s}$ vanish identically, which is the condition
that characterizes zero divisors and identically zero functions.
\end{remark}


From now on, we focus our attention on a specific class of $\mathcal{L}_{\mathcal{F},\mathcal{G}}$,
namely the cases when $N=2$, $\mathcal{F}=(f,1)$, $\mathcal{G}=(1,g)$.

\begin{definition}
Let $f,g\in\Sm(\Omega)$. The \textit{Sylvester operator} $\mathcal{S}_{f,g}$ associated to $f$ and $g$ is the $\Sm_{\R}(\Omega)$-linear operator given by
\begin{equation*}\label{sylvesterfunction}
\mathcal{S}_{f,g}(\chi):=\mathcal{L}_{(f,1),(1,g)}=f*\chi+\chi*g.
\end{equation*}
The associated \textit{Sylvester equation} with ``constant term'' $\mathfrak{b}$, is the $\Sm_{\R}(\Omega)$-linear equation given by
\begin{equation}\label{sylvesterequation}
\mathcal{S}_{f,g}(\chi)=\mathfrak{b}.
\end{equation}
\end{definition}

The name of ``Sylvester operator'' is due to the fact that, when dealing with matrices, equation~\eqref{sylvesterequation} is usually called
\textit{Sylvester equation}. 

\begin{remark}
In the case when $a_{1},a_{2},b_{1},b_{2}\in\HH\setminus\{0\}$, it is always possible to write the expression $a_{1}qb_{1}+a_{2}qb_{2}$ 
as $a_{2}(a_{2}^{-1}a_{1}q+qb_{2}b_{1}^{-1})b_{1}$ and then the solvability of $a_{1}qb_{1}+a_{2}qb_{2}=p$
is equivalent to the solvability of $(a_{2}^{-1}a_{1})q+q(b_{2}b_{1}^{-1})=a_{2}^{-1}pb_{1}^{-1}$, which is the Sylvester 
equation associated to $a_{2}^{-1}a_{1}$ and $b_{2}b_{1}^{-1}$.
In the case of slice (semi-)regular functions, the possible presence of zero divisors and the fact that the $*$-inverse of
a regular function is not always a regular function is an obstruction to the reduction of the general case to the Sylvester case.
\end{remark}

The following proposition shows that the Sylvester equation associated to $f$ and $g$ is also associated to a wider family of functions.

\begin{proposition}\label{propsameeq}
Let $f,g\in\Sm(\Omega)\setminus\{0\}$. Then for any $\alpha\in\Sm_{\R}(\Omega)$, we have
\begin{equation}\label{sameeq}
\mathcal{S}_{f,g}=\mathcal{S}_{f+\alpha,g-\alpha}.
\end{equation}
\end{proposition}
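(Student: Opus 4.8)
The plan is to verify the identity directly from the definition of the Sylvester operator, exploiting the fact—recorded immediately after Proposition~\ref{vectorspace}—that $\Sm_{\R}(\Omega)$ is the center of $\Sm(\Omega)$. Fix an arbitrary $\chi\in\Sm(\Omega)$ and expand the right-hand side using the bilinearity of the $*$-product with respect to addition:
\[
\mathcal{S}_{f+\alpha,g-\alpha}(\chi)=(f+\alpha)*\chi+\chi*(g-\alpha)
=f*\chi+\alpha*\chi+\chi*g-\chi*\alpha.
\]

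The crucial observation is that $\alpha\in\Sm_{\R}(\Omega)$ lies in the center of $\Sm(\Omega)$, so $\alpha*\chi=\chi*\alpha$ (indeed, for a slice preserving function this $*$-product coincides with the pointwise product and commutes with every element, as recalled in the preliminaries). Hence the two middle terms cancel:
\[
\mathcal{S}_{f+\alpha,g-\alpha}(\chi)=f*\chi+\chi*g=\mathcal{S}_{f,g}(\chi).
\]
Since $\chi$ was arbitrary, this proves the equality of operators claimed in~\eqref{sameeq}.

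I expect no serious obstacle here: the entire content is the centrality of $\Sm_{\R}(\Omega)$, which is already established, together with the additive bilinearity of $*$. The only point to state with care is that $\alpha$ commutes with $\chi$ under the $*$-product rather than merely the pointwise product; this is exactly what membership in the center delivers, so the cancellation of $\alpha*\chi-\chi*\alpha$ is fully justified.
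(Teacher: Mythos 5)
Your proof is correct and coincides with the paper's own argument: both expand $\mathcal{S}_{f+\alpha,g-\alpha}(\chi)$ by bilinearity of the $*$-product and cancel $\alpha*\chi$ against $\chi*\alpha$ using that $\Sm_{\R}(\Omega)$ is the center of $\Sm(\Omega)$. Nothing further is needed.
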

\begin{proof}
Indeed, for any $\chi\in\Sm(\Omega)$, we have
$$\mathcal{S}_{f+\alpha,g-\alpha}(\chi)=f*\chi+\alpha*\chi+\chi*g+\chi*(-\alpha)=f*\chi+\chi*g=\mathcal{S}_{f,g}(\chi),$$
since $\Sm_{\R}(\Omega)$ is the center of $\Sm(\Omega)$.
\end{proof}

We notice that, if $g_{v}\equiv 0$, then $\mathcal{S}_{f,g}=\mathcal{S}_{f+g_{0},0}=\mathcal{L}_{f+g_{0},1}$;
analogously, if $f_{v}\equiv 0$, then $\mathcal{S}_{f,g}=\mathcal{S}_{0,f_{0}+g}=\mathcal{L}_{1,f_{0}+g}$.
Since the operators of the class $\mathcal{L}_{f,g}$ were thoroughly studied in Section~\ref{lineq}, from now on,
without loss of generality, we shall work under the following 

\begin{assumption}\label{assumpfv}
We consider $\mathcal{S}_{f,g}$ where neither $f$ nor $g$ belong to $\Sm_{\R}(\Omega)$.
\end{assumption}

We now give two definitions that will be useful to study the invertibility of $\mathcal{S}_{f,g}$.

\begin{definition}
Let $f,g\in\Sm(\Omega)$. We say that $f$ and $g$ are equivalent and write $f\simeq g$ if there exists a $*$-invertible $h\in\Sm(\Omega)$, such that
$$f=h^{-*}*g*h.$$
\end{definition}

\begin{lemma}\label{necessary}
If $f\simeq g$, then $f_{0}\equiv g_{0}$ and $f^{s}\equiv g^{s}$ (that implies also $f_{v}^{s}\equiv g_{v}^{s}$).
In particular, if $f\simeq g$, then $f$ is a zero divisor if and only if $g$ is.
\end{lemma}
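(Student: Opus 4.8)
The plan is to work directly from the defining relation $f = h^{-*} * g * h$, exploiting the two structural invariants of $*$-conjugation: the symmetrized function and the ``real part''. Both computations rest on standard facts that I would assume freely: regular conjugation is an anti-involution, so $(a*b)^c = b^c * a^c$ and $(a^c)^c = a$; symmetrization is multiplicative with central values, so $(a*b)^s = a^s b^s \in \Sm_{\R}(\Omega)$; and $\Sm_{\R}(\Omega)$ is the center of $\Sm(\Omega)$ (Proposition~\ref{vectorspace}). Since $h$ is $*$-invertible, $h^s \not\equiv 0$, so $h^s$ is invertible in the field $\Sm_{\R}(\Omega)$; applying symmetrization to $h * h^{-*} = 1$ gives $(h^{-*})^s = (h^s)^{-1}$.

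First I would establish $f^s \equiv g^s$. Applying symmetrization to $f = h^{-*} * g * h$ and using multiplicativity yields $f^s = (h^{-*})^s\, g^s\, h^s = (h^s)^{-1} g^s h^s$. As all three factors lie in the commutative field $\Sm_{\R}(\Omega)$ they commute, and $(h^s)^{-1} h^s = 1$ leaves $f^s \equiv g^s$.

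Next I would prove $f_0 \equiv g_0$. Writing $h^{-*} = (h^s)^{-1} h^c$ and using that $(h^s)^{-1}$ is central, the relation becomes $f = (h^s)^{-1}\, h^c * g * h$. Applying regular conjugation together with the anti-involution property and the computation $(h^{-*})^c = h * (h^s)^{-1} = (h^s)^{-1} h$ (here $(h^s)^{-1}$ is slice preserving, hence its own regular conjugate), I obtain $f^c = (h^s)^{-1}\, h^c * g^c * h$. Adding the two expressions and factoring out the central scalar $g + g^c = 2g_0$ gives $f + f^c = (h^s)^{-1}(2g_0)\, h^c * h$. Since $h^c * h = h^s$, the symmetrized factor cancels and $f + f^c = 2 g_0$, i.e.\ $f_0 = (f + f^c)/2 = g_0$.

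The two identities yield the remaining assertions. From $f^s = f_0^2 + f_v^s$ (valid because $f_0$ is central and $f_v * f_v = -f_v^s$), together with $f_0 \equiv g_0$ and $f^s \equiv g^s$, I get $f_v^s = f^s - f_0^2 = g^s - g_0^2 = g_v^s$. Finally, a function is a zero divisor precisely when its symmetrized function vanishes identically; since $f^s \equiv g^s$, one vanishes iff the other does, so $f$ is a zero divisor iff $g$ is. The main obstacle is purely bookkeeping: correctly computing $(h^{-*})^c$ and invoking centrality at each step to move scalar factors through $*$-products. The crucial point making the real part invariant is the cancellation $h^c * h = h^s$ in the $f_0$ computation, so I would state that identity explicitly before using it.
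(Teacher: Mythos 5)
Your proof is correct and is essentially the paper's own argument: both rest on the anti-involution property of regular conjugation, the centrality of $\Sm_{\R}(\Omega)$, and the identity $h^{c}*h=h^{s}$, the only difference being bookkeeping (you conjugate the whole relation and add to get $f+f^{c}=2g_{0}$, while the paper splits off $g_{v}$ and shows $(h^{c}*g_{v}*h)^{c}=-h^{c}*g_{v}*h$; you also make explicit, via multiplicativity of the symmetrization, the step the paper dismisses as ``straightforward''). No gaps.
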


\begin{proof}
If we write $g=g_{0}+g_{v}$, we then have, for some invertible $h \in \mathcal{SEM}(\Omega)$,
$$
f=h^{-s}h^{c}*g*h=h^{-s}h^{c}*(g_{0}+g_{v})*h=h^{-s}h^{c}g_{0}h+h^{-s}h^{c}*g_{v}*h=g_{0}+h^{-s}h^{c}*g_{v}*h.
$$
Then, in order to prove that $f_{0}=g_{0}$, it is enough to show that $(h^{-s}h^{c}*g_{v}*h)_{0}\equiv 0$.
As $h^{-s}\in\Sm_{\R}(\Omega)$, we are left with showing that $(h^{c}*g_{v}*h)_{0}\equiv0$; indeed we have
$$
(h^{c}*g_{v}*h)^{c}=h^{c}*g_{v}^{c}*h=-h^{c}*g_{v}*h,
$$
and the equality $f_{0}=g_{0}$ is proven. The equality $f^{s}=g^{s}$ is now straightforward.

Last assertion follows immediately from the fact that $f$ is a zero divisor if and only if $f^{s}\equiv 0$ and
the same holds for $g$.
\end{proof}

An accurate study of the operator $\mathcal{S}_{f,g}$ will show that, if $f,g\not\in\Sm_{\R}(\Omega)$, then the equalities $f_{0}=g_{0}$ and $f_{v}^{s}=g_{v}^{s}$ imply
$f\simeq g$ (see Corollary~\ref{corollaryequivalence} if the domain is slice and Corollary~\ref{sufficiencyproduct} in the general case).

We now pass to the announced second definition.

\begin{definition}
Let $f,g\in\Sm(\Omega)$. We say that the couple $(f,g)$ \textit{intertwines with (a zero divisor) $\sigma$}, if there exists a zero divisor $\sigma$ such that $$f*\sigma=\sigma*g.$$ 
\end{definition}

\begin{example}
Let $\Omega$ be a product domain and choose $f$ and $g$ such that $f_{0}=g_{0}\not\equiv 0$, $f_{v}$ a zero divisor
and $g_{v}\equiv 0$, then we have $f_{v}^{s}=g_{v}^{s}\equiv 0$. 
We claim that $f\not\simeq g$ and that the couple of functions $(f,g)$ intertwines with an idempotent. 
Indeed, if there exists $h\in\Sm(\Omega)$ invertible such that $f=h^{-*}*g*h$, 
as $g_{v}\equiv 0$ we obtain $f\equiv g_{0}$, which contradicts the fact that $f_{v}$ is a zero divisor.
Now, write $f_{v}=2(f_{v}\eta)_{0} \eta^{c}*\rho$ for a suitable unitary $\eta\in\HH$ and $\rho$ idempotent 
as in equation~\eqref{zerodividemright}. Since $\rho*\rho^{c}\equiv 0$ and $g=g_{0}=f_{0}$, we have
$$f*\rho^{c}=(f_{0}+f_{v})*\rho^{c}=f_{0}\rho^{c}+2(f_{v}\eta)_{0} \eta^{c}*\rho*\rho^{c}=
f_{0}\rho^{c}=g_{0}\rho^{c}=\rho^{c}*g.$$
\end{example}

Next proposition characterizes the non-invertibility of $\mathcal{S}_{f,g}$ in terms of the previous definitions.

\begin{proposition}\label{isomorphism-sylv}
Given $f,g\in\Sm(\Omega)$, then $\mathcal{S}_{f,g}$ is not an isomorphism if and only if
one of the two following conditions holds
\begin{enumerate}
\item $f\simeq -g$;
\item there exist a zero divisor $\chi$ such that $(f,-g)$ intertwines with $\chi$.
\end{enumerate}
\end{proposition}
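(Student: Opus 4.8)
The plan is to translate the non-invertibility of $\mathcal{S}_{f,g}$ into the existence of a nonzero element in its kernel, and then to dichotomize that element according to whether it is $*$-invertible or a zero divisor. Since, by Proposition~\ref{vectorspace}, $\Sm(\Omega)$ is a $4$-dimensional vector space over the field $\Sm_{\R}(\Omega)$ and $\mathcal{S}_{f,g}$ is $\Sm_{\R}(\Omega)$-linear, the rank-nullity theorem guarantees that $\mathcal{S}_{f,g}$ fails to be an isomorphism precisely when $\ker(\mathcal{S}_{f,g})\neq\{0\}$. Thus the whole proof amounts to understanding the nonzero solutions $\chi$ of $\mathcal{S}_{f,g}(\chi)=f*\chi+\chi*g=0$, that is, of $f*\chi=\chi*(-g)$.

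Next I would use the structural fact recalled before Proposition~\ref{vectorspace}: every nonzero $\chi\in\Sm(\Omega)$ is either $*$-invertible (when $\chi^{s}\not\equiv 0$, with $\chi^{-*}=(\chi^{s})^{-1}\chi^{c}$) or a zero divisor (when $\chi^{s}\equiv 0$). This splits the nonzero solutions of $f*\chi=\chi*(-g)$ into exactly two mutually exclusive families, which I will match to conditions \textit{(1)} and \textit{(2)} respectively.

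For the invertible case, I would rewrite $f*\chi=\chi*(-g)$ as $f=\chi*(-g)*\chi^{-*}$ and set $h:=\chi^{-*}$, so that $h$ is invertible and $f=h^{-*}*(-g)*h$; this is exactly $f\simeq -g$. The converse is obtained by reversing these manipulations: starting from an invertible $h$ with $f=h^{-*}*(-g)*h$, the element $\chi:=h^{-*}$ is invertible, hence nonzero, and a direct computation gives $f*\chi=h^{-*}*(-g)=\chi*(-g)$, so $\chi\in\ker(\mathcal{S}_{f,g})$. For the zero-divisor case the translation is immediate: a nonzero zero divisor $\chi$ with $f*\chi=\chi*(-g)$ is, by definition, a zero divisor with which the couple $(f,-g)$ intertwines, and conversely any such intertwiner lies in the kernel.

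Assembling these two cases, $\ker(\mathcal{S}_{f,g})\neq\{0\}$ holds if and only if the kernel contains either an invertible element (equivalent to \textit{(1)}) or a zero divisor (equivalent to \textit{(2)}), which is precisely the claimed disjunction. I do not expect a genuine obstacle here, since all the heavy lifting has been done in the preliminary sections; the only point that requires care is the bookkeeping of the conjugation direction, namely making sure the sign $-g$ and the choice $h=\chi^{-*}$ (rather than $h=\chi$) are consistent with the definition $f=h^{-*}*g*h$ of $\simeq$.
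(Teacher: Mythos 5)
Your proposal is correct and follows essentially the same route as the paper's proof: both reduce non-invertibility to the existence of a nonzero element of $\ker(\mathcal{S}_{f,g})$, then split according to whether that element is $*$-invertible (yielding $f\simeq -g$, using that $\simeq$ is symmetric so the two ways of writing the conjugation agree) or a zero divisor (yielding the intertwining condition). Your explicit appeal to rank-nullity over the field $\Sm_{\R}(\Omega)$ merely makes precise a step the paper leaves implicit.
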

\begin{proof}
The operator $\mathcal{S}_{f,g}$ is not an isomorphism if and only if there exists $\chi\in\Sm(\Omega)\setminus\{0\}$
such that $f*\chi+\chi*g\equiv0$. If $\chi$ is not a zero divisor, then it is invertible in $\Sm(\Omega)$ and 
$-g=\chi^{-*}*f*\chi$ exactly means $f\simeq -g$. 
If $\chi$ is a zero divisor, then $f*\chi+\chi*g\equiv0$ exactly means that the couple $(f,-g)$ intertwines with $\chi$.
\end{proof}

Notice that the first condition says that there exists an invertible $\chi\in\ker(\mathcal{S}_{f,g})$, while the second one
means that a zero divisor belongs to $\ker(\mathcal{S}_{f,g})$. 

\begin{remark}\label{noproduct}
Trivially, if $\Omega$ is a slice domain, for any $f,g\in\Sm(\Omega)$, the kernel of $\mathcal{S}_{f,g}$ cannot contain 
zero divisors, so \textit{(2)}. can never take place and thus $\mathcal{S}_{f,g}$ is not an isomorphism if and only if $f\simeq-g$.
\end{remark}

Together with the previous remark, the following examples show that the two cases stated in Proposition~\ref{isomorphism-sylv} are not related.

\begin{example}\label{exampleintert}
Let $\Omega$ be a product domain and set 
$$
f=1-\mathcal{J}i,\qquad g=fj=(1-\mathcal{J}i)j=j-\mathcal{J}k.
$$
It is easily seen that $\chi=f^{c}\in\ker(\mathcal{S}_{f,g})$, while $f$ and $-g$ have different ``real parts'' and therefore,
thanks to Lemma~\ref{necessary}, they are not equivalent.
\end{example}

\begin{example}
Let $\sigma\in\So(\Omega)$ be an idempotent and set $f=\sigma, g=-\sigma$. 
Trivially any $\chi\in\Sm_{\R}(\Omega)$ belongs to $\ker{\mathcal{S}_{f,g}}$, as well as
$\chi=\sigma^{c}$.
\end{example}

\section{The rank of the Sylvester operator}\label{ranksylv}

We begin this section with a characterization of the invertibility of $\mathcal{S}_{f,g}$ by means of the matrix representation given
in Section~\ref{representation}. We recall that, by Assumption~\ref{assumpfv}, neither $f$ nor $g$ belong to $\Sm_{\R}(\Omega)$.
To simplify notation, from now on, we set
$$
S_{f,g}=\imath_{L}(f)+\imath_{R}(g).
$$

\begin{proposition}\label{propdetsyl}
Given $f=f_{0}+f_{v},g=g_{0}+g_{v}\in\Sm(\Omega)$,
the characteristic polynomial of the $\Sm_{\R}(\Omega)$-linear operator $\mathcal{S}_{f,g}$ is given by
\begin{equation*}\label{charpol}
p(\lambda)=(f_{0}+g_{0}-\lambda)^{2}[(f_{0}+g_{0}-\lambda)^{2}+2(f_{v}^{s}+g_{v}^{s})]+(f_{v}^{s}-g_{v}^{s})^{2}.
\end{equation*}
In particular $\mathcal{S}_{f,g}$ is an isomorphism if and only if 
\begin{equation}\label{determinantsylvester}
(f_{0}+g_{0})^{2}[(f_{0}+g_{0})^{2}+2(f_{v}^{s}+g_{v}^{s})]+(f_{v}^{s}-g_{v}^{s})^{2}\not\equiv 0.
\end{equation}
Moreover, the rank of $\mathcal{S}_{f,g}$ is always strictly greater than $1$.
\end{proposition}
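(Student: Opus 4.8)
The plan is to translate everything into the matrix language of Section~\ref{representation}. Since $\mathcal{S}_{f,g}=\mathcal{L}_{(f,1),(1,g)}$, its matrix with respect to the orthonormal basis $\mathcal{B}=(1,I,J,K)$ is $S_{f,g}=\imath_{L}(f)+\imath_{R}(g)$, a $4\times 4$ matrix over the field $\Sm_{\R}(\Omega)$, and its characteristic polynomial is $p(\lambda)=\det(\lambda\,\mathrm{Id}_{4}-S_{f,g})$. The decisive observation is the splitting $S_{f,g}=(f_{0}+g_{0})\mathrm{Id}_{4}+M$, where $M:=\imath_{L}(f_{v})+\imath_{R}(g_{v})$: inspecting the explicit entries of $\imath_{L}$ and $\imath_{R}$ (which have this shape precisely because $\mathcal{B}$ is orthonormal) one checks directly that $M$ is alternating, i.e.\ $M^{T}=-M$ with zero diagonal. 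This is the structural fact that makes the whole computation transparent.

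First I would compute the characteristic polynomial of $M$. For a $4\times 4$ alternating matrix over a field of characteristic $0$ one has $\det(t\,\mathrm{Id}_{4}-M)=t^{4}+\big(\sum_{p<q}m_{pq}^{2}\big)t^{2}+\mathrm{Pf}(M)^{2}$, where $\mathrm{Pf}(M)=m_{12}m_{34}-m_{13}m_{24}+m_{14}m_{23}$ is the Pfaffian. Reading off the entries $m_{pq}$ of $M$ and using $f_{v}^{s}=f_{1}^{2}+f_{2}^{2}+f_{3}^{2}$ and $g_{v}^{s}=g_{1}^{2}+g_{2}^{2}+g_{3}^{2}$, a short calculation gives $\sum_{p<q}m_{pq}^{2}=2(f_{v}^{s}+g_{v}^{s})$ and $\mathrm{Pf}(M)=f_{v}^{s}-g_{v}^{s}$. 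Substituting $t=\lambda-(f_{0}+g_{0})$ and using $\det(\lambda\,\mathrm{Id}_{4}-S_{f,g})=\det\big((\lambda-(f_{0}+g_{0}))\mathrm{Id}_{4}-M\big)$ yields exactly the claimed formula for $p(\lambda)$, since $(\lambda-(f_{0}+g_{0}))^{2}=(f_{0}+g_{0}-\lambda)^{2}$. The invertibility criterion is then immediate: as in Proposition~\ref{determinant}, $\mathcal{S}_{f,g}$ is an isomorphism if and only if $\det S_{f,g}\not\equiv 0$, and $\det S_{f,g}=p(0)$ is precisely the left-hand side of~\eqref{determinantsylvester}.

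It remains to show $\text{rk}(\mathcal{S}_{f,g})>1$, equivalently that the matrix $S_{f,g}$ over $\Sm_{\R}(\Omega)$ cannot have rank $0$ or $1$. Rank $0$ is excluded because $S_{f,g}\equiv 0$ would force $f_{v}\equiv g_{v}\equiv 0$ and $f_{0}+g_{0}\equiv 0$, contradicting Assumption~\ref{assumpfv}. To rule out rank $1$ I would argue in two cases according to $s:=f_{0}+g_{0}$. If $s\equiv 0$, then $S_{f,g}=M$ is a nonzero alternating matrix (nonzero since $f_{v},g_{v}$ are not both $\equiv 0$), and an alternating matrix over a characteristic $0$ field has even rank, hence $\text{rk}(S_{f,g})\in\{2,4\}$. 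If $s\not\equiv 0$, observe that $S_{f,g}+S_{f,g}^{T}=2s\,\mathrm{Id}_{4}$ is invertible over the field $\Sm_{\R}(\Omega)$, so it has rank $4$; since $\text{rk}(S_{f,g}+S_{f,g}^{T})\le 2\,\text{rk}(S_{f,g})$, we get $\text{rk}(S_{f,g})\ge 2$. In either case the rank exceeds $1$.

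The computational heart, and the only place where genuine work hides, is the determinant of $M$; the main obstacle is precisely to spot that $M$ is alternating, after which the Pfaffian identity collapses the $4\times 4$ determinant onto the two symmetric expressions $f_{v}^{s}\pm g_{v}^{s}$. The rank statement, by contrast, needs no computation once one exploits the same skew structure (even rank when $s\equiv 0$, and invertibility of the symmetric part $2s\,\mathrm{Id}_{4}$ when $s\not\equiv 0$).
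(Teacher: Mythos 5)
Your proof is correct, but it takes a genuinely different route from the paper's. The paper works directly with the explicit $4\times 4$ matrix $S_{f,g}$ of formula~\eqref{matrixiLR}: the characteristic polynomial is obtained by a ``long but straightforward computation'', and the bound $\text{rk}(\mathcal{S}_{f,g})>1$ is proved by contradiction --- if the rank were at most $1$, the eigenvalue $0$ would have algebraic multiplicity at least $3$, forcing the coefficients of $\lambda^{0},\lambda^{1},\lambda^{2}$ in $p(\lambda)$ to vanish simultaneously; the resulting system~\eqref{3eq} is then shown to collapse to $f_{0}+g_{0}\equiv 0$, in which case Assumption~\ref{assumpfv} produces a nonvanishing $2\times 2$ minor, a contradiction. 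You instead isolate the structural fact the paper never makes explicit: $S_{f,g}=(f_{0}+g_{0})\mathrm{Id}_{4}+M$ with $M=\imath_{L}(f_{v})+\imath_{R}(g_{v})$ alternating (this is easily checked on the entries of~\eqref{matrixiLR}, and your identifications $\sum_{p<q}m_{pq}^{2}=2(f_{v}^{s}+g_{v}^{s})$, $\mathrm{Pf}(M)=f_{v}^{s}-g_{v}^{s}$ are correct). The classical identity $\det(t\,\mathrm{Id}_{4}-M)=t^{4}+\bigl(\sum_{p<q}m_{pq}^{2}\bigr)t^{2}+\mathrm{Pf}(M)^{2}$, valid over the characteristic-zero field $\Sm_{\R}(\Omega)$, then yields $p(\lambda)$ after the shift $t=\lambda-(f_{0}+g_{0})$, and your rank argument replaces the paper's multiplicity analysis by two clean observations: when $f_{0}+g_{0}\equiv 0$ the matrix is a nonzero alternating matrix, hence of even rank $\geq 2$; when $f_{0}+g_{0}\not\equiv 0$ the symmetric part $S_{f,g}+S_{f,g}^{T}=2(f_{0}+g_{0})\mathrm{Id}_{4}$ has rank $4$, so subadditivity gives $\text{rk}(S_{f,g})\geq 2$ (Assumption~\ref{assumpfv} is needed, as in the paper, only to rule out $M\equiv 0$ in the first case). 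What each approach buys: the paper's is entirely self-contained, using nothing beyond expanding a determinant and comparing coefficients; yours is shorter, explains \emph{why} $p(\lambda)$ is an even polynomial in $f_{0}+g_{0}-\lambda$ with a perfect-square constant term, and makes the rank bound conceptually transparent, at the modest cost of invoking two standard facts (the Pfaffian formula for $4\times4$ alternating matrices and the evenness of the rank of an alternating form) that would deserve a one-line justification or reference if inserted into the paper.
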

\begin{proof}
First of all, given
$f=f_{0}+f_{1}i+f_{2}j+f_{3}k$ and $g=g_{0}+g_{1}i+g_{2}j+g_{3}k$, we write
\begin{equation}\label{matrixiLR}
S_{f,g}:=\imath_{L}(f)+\imath_{R}(g)=\begin{vmatrix}
f_{0} +g_{0}& -(f_{1}+g_{1}) &-(f_{2}+g_{2})&-(f_{3}+g_{3})\\
f_{1}+g_{1} &f_{0}+g_{0}&-(f_{3}-g_{3})&f_{2}-g_{2}\\
f_{2}+g_{2}&f_{3}-g_{3}&f_{0}+g_{0}&-(f_{1}-g_{1})\\
f_{3}+g_{3}&-(f_{2}-g_{2})&f_{1}-g_{1}&f_{0}+g_{0}
\end{vmatrix}.
\end{equation}
A long but straightforward computation gives
\begin{align}\label{charpolcomp}
p(\lambda)=\det (S_{f,g}-\lambda)=&(f_{0}+g_{0}-\lambda)^{4}+2(f_{0}+g_{0}-\lambda)^{2}(f_{1}^{2}+f_{2}^{2}+f_{3}^{2}+g_{1}^{2}+g_{2}^{2}+g_{3}^{2})\nonumber\\
&\quad+(f_{1}^{2}-g_{1}^{2}+f_{2}^{2}-g_{2}^{2}+f_{3}^{2}-g_{3}^{2})^{2}\nonumber\\
=&(f_{0}+g_{0}-\lambda)^{2}[(f_{0}+g_{0}-\lambda)^{2}+2(f_{v}^{s}+g_{v}^{s})]+(f_{v}^{s}-g_{v}^{s})^{2}\\
=&\lambda^{4}-\big[4(f_{0}+g_{0})\big]\lambda^{3}+\big[2(f_{v}^{s}+g_{v}^{s}+3(f_{0}+g_{0})^{2})\big]\lambda^{2}\nonumber\\
&\quad-\big[4(f_{0}+g_{0})((f_{0}+g_{0})^{2}+f_{v}^{s}+g_{v}^{s})\big]\lambda\nonumber\\
&\quad+(f_{0}+g_{0})^{2}[(f_{0}+g_{0})^{2}+2(f_{v}^{s}+g_{v}^{s})]+(f_{v}^{s}-g_{v}^{s})^{2}\nonumber
\end{align}
Thanks to Proposition~\ref{determinant}, we have that $\mathcal{S}_{f,g}$ is an isomorphism if and only if 
$\det(S_{f,g})=(f_{0}+g_{0})^{2}[(f_{0}+g_{0})^{2}+2(f_{v}^{s}+g_{v}^{s})]+(f_{v}^{s}-g_{v}^{s})^{2}\not\equiv0$. 

Suppose now that $\mathcal{S}_{f,g}$ has rank less than $2$. Then $\lambda=0$ is an eigenvalue of algebraic multiplicity at
least $3$, which gives
\begin{equation}\label{3eq}
\begin{cases}
f_{v}^{s}+g_{v}^{s}+3(f_{0}+g_{0})^{2}\equiv0\\
(f_{0}+g_{0})((f_{0}+g_{0})^{2}+f_{v}^{s}+g_{v}^{s})\equiv 0\\
(f_{0}+g_{0})^{2}[(f_{0}+g_{0})^{2}+2(f_{v}^{s}+g_{v}^{s})]+(f_{v}^{s}-g_{v}^{s})^{2}\equiv 0.
\end{cases}
\end{equation}
The second equation is equivalent to either $f_{0}+g_{0}\equiv 0$ or $(f_{0}+g_{0})^{2}+f_{v}^{s}+g_{v}^{s}\equiv 0$.
In the first case, since either $f_{v}+g_{v}$ or $f_{v}-g_{v}$ are not identically zero because of Assumption~\ref{assumpfv}, we can find a $2\times 2$ submatrix
of $S_{f,g}$ with determinant different from zero, which is a contradiction. In the second case, the first equation of system~\eqref{3eq}  together with $(f_{0}+g_{0})^{2}+f_{v}^{s}+g_{v}^{s}\equiv 0$ gives
$$
\begin{cases}
f_{v}^{s}+g_{v}^{s}+3(f_{0}+g_{0})^{2}\equiv0\\
(f_{0}+g_{0})^{2}+f_{v}^{s}+g_{v}^{s}\equiv 0,
\end{cases}
$$
which again entails $f_{0}+g_{0}\equiv 0$ and we are back to the previous contradiction.
%
%
%
%
%
%
%
%
\end{proof}

Last proposition allows us to prove that in the case of slice domains the relation $f\simeq g$ means exactly $f_{0}=g_{0}$
and $f^{s}\equiv g^{s}$. In fact this holds even for product domains, but the proof of this fact will require a much deeper
investigation on the kernel of $\mathcal{S}_{f,g}$.

\begin{corollary}\label{corollaryequivalence}
Let $f,g\in\Sm(\Omega)$ and $\Omega$ be a slice domain. Then $f\simeq g$ if and only if $f_{0}\equiv g_{0}$ and $f^{s}\equiv g^{s}$
(that is $f_{v}^{s}\equiv g_{v}^{s}$).
\end{corollary}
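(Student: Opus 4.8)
The forward implication is immediate from Lemma~\ref{necessary}: if $f\simeq g$ then $f_0\equiv g_0$ and $f^s\equiv g^s$, the latter yielding $f_v^s\equiv g_v^s$. Thus the entire content of the statement lies in the converse, which I would reduce to the singularity criterion for a Sylvester operator. The guiding idea is that, in order to manufacture an invertible element realizing $f\simeq g$, one should examine not $\mathcal{S}_{f,g}$ but $\mathcal{S}_{f,-g}$: by Remark~\ref{noproduct}, on a slice domain $\mathcal{S}_{f,-g}$ fails to be an isomorphism exactly when $f\simeq-(-g)=g$.

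The heart of the proof is therefore to check that the hypotheses $f_0\equiv g_0$ and $f_v^s\equiv g_v^s$ force $\mathcal{S}_{f,-g}$ to be singular. First I would record the elementary sign bookkeeping involved in replacing $g$ by $-g$: one has $(-g)_0=-g_0$, while $(-g)_v=-g_v$ has the same coefficients up to sign and hence $(-g)_v^s=g_v^s$. Substituting these into the determinantal condition~\eqref{determinantsylvester} of Proposition~\ref{propdetsyl} applied to the pair $(f,-g)$, the determinant of $S_{f,-g}$ takes the form
\[
(f_0-g_0)^2\big[(f_0-g_0)^2+2(f_v^s+g_v^s)\big]+(f_v^s-g_v^s)^2 .
\]
Under $f_0\equiv g_0$ the factor $(f_0-g_0)^2$ vanishes identically and annihilates the bracketed summand, while $f_v^s\equiv g_v^s$ makes $(f_v^s-g_v^s)^2$ vanish; hence this determinant is identically zero and $\mathcal{S}_{f,-g}$ is not an isomorphism.

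To close the argument I would invoke the slice-domain hypothesis through Remark~\ref{noproduct}: since $\Omega$ is a slice domain and $\mathcal{S}_{f,-g}$ is not an isomorphism, one has $f\simeq-(-g)=g$, as desired. The only genuinely delicate points are the correct sign accounting in passing from $g$ to $-g$ (so that the relevant vanishing is of $f_0-g_0$ and $f_v^s-g_v^s$, and not of the sums), and the essential use of the slice-domain assumption, which via Proposition~\ref{isomorphism-sylv} guarantees that the nontrivial kernel of the singular operator consists of $*$-invertible elements rather than mere zero divisors; this is precisely why the statement is restricted to slice domains here, the product-domain case being deferred to the deeper analysis of $\ker(\mathcal{S}_{f,g})$ announced in the text.
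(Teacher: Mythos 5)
Your proposal is correct and follows essentially the same route as the paper: both prove sufficiency by observing that $f_{0}\equiv g_{0}$ and $f_{v}^{s}\equiv g_{v}^{s}$ force $\det(S_{f,-g})\equiv 0$ via Proposition~\ref{propdetsyl}, so that $\mathcal{S}_{f,-g}$ is singular, and then use the slice-domain hypothesis (absence of zero divisors, as in Remark~\ref{noproduct}) to conclude that $\ker(\mathcal{S}_{f,-g})$ contains an invertible element, i.e.\ $f\simeq g$. Your explicit sign bookkeeping for the pair $(f,-g)$ is exactly the computation the paper leaves implicit in the phrase ``then $\det(S_{f,-g})\equiv 0$.''
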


\begin{proof}
The necessity of the condition was shown in Lemma~\ref{necessary}. To prove its sufficiency, we notice that,
if $f_{0}\equiv g_{0}$ and $f^{s}\equiv g^{s}$, then $\det(S_{f,-g})\equiv 0$, hence
$\mathcal{S}_{f,-g}$ is not an isomorphism and therefore $\ker(\mathcal{S}_{f,-g})\neq\emptyset$. As $\Omega$ contains real points,
there are no zero divisors in $\Sm(\Omega)$ and therefore $\ker(\mathcal{S}_{f,-g})$ contains an invertible $\chi$,
which shows that $f\simeq g$.
\end{proof}



Next result gives a more precise characterization of the rank of $S_{f,g}$ when $f_{0}+g_{0}= 0$.

\begin{proposition}\label{ranksimeq}
Let $f,g\in\Sm(\Omega)$ be such that $f_{0}=-g_{0}$, then $\mbox{rk}(S_{f,g})=2$ if and only if
$f_{v}^{s}= g_{v}^{s}$. In particular if $f\simeq -g$, then $\mbox{rk}(S_{f,g})=2$.
\end{proposition}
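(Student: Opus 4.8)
The plan is to exploit the extra structure that the hypothesis $f_0=-g_0$ forces on the representing matrix. First I would insert $f_0+g_0\equiv0$ into the explicit matrix~\eqref{matrixiLR}: the whole diagonal vanishes and, comparing entries, the $(i,j)$-entry turns out to be the opposite of the $(j,i)$-entry for every pair $i\neq j$. Thus $S_{f,g}$ is a $4\times4$ \emph{skew-symmetric} matrix with entries in $\Sm_\R(\Omega)$.

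The crucial algebraic input is that a skew-symmetric matrix over a field of characteristic different from $2$ has even rank (it is congruent to a block-diagonal matrix whose nonzero blocks are $2\times2$ symplectic blocks). Since $\Sm_\R(\Omega)$ is a field of characteristic $0$, this gives $\mbox{rk}(S_{f,g})\in\{0,2,4\}$. Rank $0$ is ruled out by Assumption~\ref{assumpfv}: as $f_v\not\equiv0$, some $f_\ell\not\equiv0$ with $\ell\in\{1,2,3\}$, hence at least one of $f_\ell+g_\ell$, $f_\ell-g_\ell$ is not identically zero and $S_{f,g}$ is a nonzero matrix. Therefore $\mbox{rk}(S_{f,g})\in\{2,4\}$, and it only remains to separate these two cases according to whether $\det(S_{f,g})$ vanishes.

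To evaluate the determinant I would specialize the characteristic polynomial of Proposition~\ref{propdetsyl} at $\lambda=0$: using $f_0+g_0\equiv0$ the factor $(f_0+g_0)^2$ multiplying the bracket drops out and one is left with $\det(S_{f,g})=p(0)=(f_v^s-g_v^s)^2$ (equivalently, the Pfaffian of the skew-symmetric matrix equals $f_v^s-g_v^s$). Consequently $\det(S_{f,g})\equiv0$ if and only if $f_v^s\equiv g_v^s$, so $\mbox{rk}(S_{f,g})=4$ precisely when $f_v^s\not\equiv g_v^s$ and $\mbox{rk}(S_{f,g})=2$ precisely when $f_v^s\equiv g_v^s$. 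This is the claimed equivalence.

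For the final assertion, if $f\simeq-g$ then Lemma~\ref{necessary} gives $f_0\equiv(-g)_0=-g_0$ and $f_v^s\equiv(-g)_v^s=g_v^s$; both hypotheses of the equivalence just established hold, so $\mbox{rk}(S_{f,g})=2$. The only genuinely delicate point is the even-rank fact for skew-symmetric matrices over the function field $\Sm_\R(\Omega)$; once the skew-symmetry is recognized, the whole rank computation collapses to the single scalar condition $f_v^s\equiv g_v^s$, which is exactly why this lemma reduces a $4\times4$ rank question to a statement about symmetrized vector parts.
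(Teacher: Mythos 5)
Your proof is correct, and it shares the paper's skeleton (skew-symmetry of $S_{f,g}$ plus the determinant formula) but handles the decisive step with a genuinely different key lemma. Both arguments observe that $f_0+g_0\equiv 0$ makes $S_{f,g}$ skew-symmetric and that evaluating the characteristic polynomial of Proposition~\ref{propdetsyl} at $\lambda=0$ gives $\det(S_{f,g})=(f_v^s-g_v^s)^2$, so that rank $4$ occurs exactly when $f_v^s\not\equiv g_v^s$. Where you diverge is in the degenerate case $f_v^s\equiv g_v^s$: the paper proves $\mbox{rk}(S_{f,g})\le 2$ by explicitly computing all six $3\times 3$ minors $D_{m,n}$, $1\le m<n\le 4$, each of which is a multiple of $f_v^s-g_v^s$ and hence vanishes, and then quotes the lower bound $\mbox{rk}(S_{f,g})>1$ from Proposition~\ref{propdetsyl} to pin the rank at $2$. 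You replace both of these steps by the single structural fact that an alternating matrix over a field of characteristic zero (here $\Sm_\R(\Omega)$, which the paper has shown to be a field) has even rank, so $\mbox{rk}(S_{f,g})\in\{0,2,4\}$; rank $0$ is excluded because Assumption~\ref{assumpfv} forces some $f_\ell\not\equiv 0$ with $\ell\in\{1,2,3\}$, hence a nonzero entry, and the determinant then separates $2$ from $4$. Your parenthetical remark is exactly right and worth keeping: the Pfaffian of $S_{f,g}$ is $(f_1^2-g_1^2)+(f_2^2-g_2^2)+(f_3^2-g_3^2)=f_v^s-g_v^s$, which explains conceptually why the determinant is a perfect square and why a single scalar condition governs the rank. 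What the paper's route buys is self-containedness: nothing is invoked beyond what is already proved in Proposition~\ref{propdetsyl}, at the price of a page of minor computations. What your route buys is brevity and transparency: no minors, no need for the rank-greater-than-one statement, and the Pfaffian viewpoint makes the dichotomy $\{2,4\}$ visible before any computation is done.
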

\begin{proof}
Since $f_{0}=-g_{0}$, Proposition~\ref{propdetsyl} gives that $\mbox{rk}(S_{f,g})=4$ if and only if
$f_{v}^{s}\not\equiv g_{v}^{s}$. So we are left with computing the rank of $S_{f,g}$ when
$f_{v}^{s}=g_{v}^{s}$. The hypothesis $f_0=-g_0$ implies that $S_{f,g}$ is skew symmetric,
then it is enough to compute the determinants of the first $(m,n)$-minors $D_{m,n}$, with $1\leq m<n\leq4$.
Since \vspace{-0.4cm}
\begin{center}
\begin{tabular}{ccc}
$D_{1,2} = (f_{1}-g_{1})(f_{v}^{s}-g_{v}^{s})=0$\qquad & \qquad$ D_{1,3} = (g_{2}-f_{2})(f_{v}^{s}-g_{v}^{s})=0$\qquad & \qquad$D_{1,4} = (f_{3}-g_{3})(f_{v}^{s}-g_{v}^{s})=0$\\
 & & \\
$D_{2,3} = (f_{3}+g_{3})(f_{v}^{s}-g_{v}^{s})=0$\qquad & \qquad$ D_{2,4} = (f_{2}+g_{2})(f_{v}^{s}-g_{v}^{s})=0$\qquad & \qquad$D_{3,4} = (f_{1}+g_{1})(f_{v}^{s}-g_{v}^{s})=0$\\
\end{tabular}
\end{center}
then the rank of $S_{f,g}$ is less than or equal to $2$. As we proved in Proposition~\ref{propdetsyl} that the rank of 
$S_{f,g}$ is always strictly greater than $1$, we are done.
%
%
\end{proof}

We now give two examples in which $\mathcal{S}_{f,g}$ is not an isomorphism and $f_{0}+g_{0}\not\equiv0$.

\begin{example}\label{rank3ex1}
Let $\Omega$ be a product domain and set $f=\mathcal{J}i$ and $g=1+2\mathcal{J}k$.
Then $f_{0}+g_{0}=1$, $f_{v}^{s}\equiv -1$, $g_{v}^{s}\equiv -4$.
A direct computation shows that 
the characteristic polynomial in this case is equal to $\lambda^{4}-4\lambda^{3}-4\lambda^{2}+16\lambda$, thus $\lambda=0$ has algebraic multiplicity $1$ and $\mbox{rk}(S_{f,g})=3$.
\end{example}

\begin{example}\label{rank3ex2}
Let $\Omega$ be a product domain and define $f$ and $g$ as in Example~\ref{exampleintert}. Then 
$f_{0}=g_{2}=1$, $f_{1}=g_{3}=-\mathcal{J}$, $f_{2}=f_{3}=g_{0}=g_{1}\equiv 0$ and
hence $f_{v}^{s}=-1$, $g_{v}^{s}\equiv 0$. A direct computation shows that 
the characteristic polynomial is equal to $\lambda^{4}-4\lambda^{3}+4\lambda^{2}$, thus $\lambda=0$
has algebraic multiplicity $2$. Nonetheless a direct computation of $S_{f,g}$ shows that also in this case we have $\mbox{rk}(S_{f,g})=3$.
\end{example}

We underline that in both examples,  $\mbox{rk}(\mathcal{S}_{f,g})$ equals $3$; nonetheless in the first case the eigenvalue
$0$ has algebraic multiplicity equal to $1$, whilst in the second one it has algebraic multiplicity equal to $2$.
Inspired by these instances, we prove that if $\mathcal{S}_{f,g}$ is not an isomorphism and $f_{0}+g_{0}\not\equiv 0$, then  the rank of $\mathcal{S}_{f,g}$ is always equal to $3$.

\begin{theorem}\label{rank3}
Let $f,g\in\Sm(\Omega)$ be such that $\mathcal{S}_{f,g}$ is not an isomorphism. Then $f_{0}+g_{0}\not\equiv 0$ if and only if $\mathcal{S}_{f,g}$ has rank $3$.
\end{theorem}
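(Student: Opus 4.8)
The plan is to combine the matrix picture of Section~\ref{representation} with the two facts already in hand: $\mathcal{S}_{f,g}$ has rank at least $2$ by Proposition~\ref{propdetsyl}, and, being non-invertible, rank at most $3$. Thus $\mathrm{rk}(\mathcal{S}_{f,g})\in\{2,3\}$ and it only remains to decide between the two values. Throughout I would set $a:=f_{0}+g_{0}$ and write $S_{f,g}=aI+A$, where $A:=\imath_{L}(f_{v})+\imath_{R}(g_{v})$. The matrix $A$ is skew-symmetric (equivalently $S_{f,g}+S_{f,g}^{T}=2aI$, as one reads off directly from~\eqref{matrixiLR}), and from the identities of the Lemma in Section~\ref{representation} (see~\eqref{propiLiR}) together with $f_{v}*f_{v}=-f_{v}^{s}$ one gets the \emph{commuting} relations $\imath_{L}(f_{v})^{2}=-f_{v}^{s}I$, $\imath_{R}(g_{v})^{2}=-g_{v}^{s}I$ and $\imath_{L}(f_{v})\imath_{R}(g_{v})=\imath_{R}(g_{v})\imath_{L}(f_{v})$; moreover $\imath_{L}(f_{v})\imath_{R}(g_{v})$ is exactly the matrix of the map $\chi\mapsto f_{v}*\chi*g_{v}$.

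The implication $\mathrm{rk}(\mathcal{S}_{f,g})=3\Rightarrow a\not\equiv0$ I would treat by contraposition, where it is immediate: if $a\equiv0$ then the determinant in~\eqref{determinantsylvester} collapses to $(f_{v}^{s}-g_{v}^{s})^{2}$, and since $\mathcal{S}_{f,g}$ is not an isomorphism this vanishes identically, forcing $f_{v}^{s}\equiv g_{v}^{s}$. Proposition~\ref{ranksimeq} then gives $\mathrm{rk}(\mathcal{S}_{f,g})=2$, so the rank is not $3$.

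The substantial direction is $a\not\equiv0\Rightarrow\mathrm{rk}(\mathcal{S}_{f,g})=3$, for which I must exclude rank $2$. I would argue by contradiction: suppose the nullity of $S_{f,g}$ is $2$. Since $\mathrm{rk}(S_{f,g})=\mathrm{rk}(S_{f,g}^{T})$ and $S_{f,g}^{T}=2aI-S_{f,g}$, the eigenspaces of $S_{f,g}$ for the eigenvalues $0$ and $2a$ both have dimension $2$; as $a\not\equiv0$ these eigenvalues are distinct, so the two eigenspaces are in direct sum and span the whole space. Hence $S_{f,g}$ is diagonalizable with minimal polynomial dividing $\lambda(\lambda-2a)$, which yields $S_{f,g}^{2}=2a\,S_{f,g}$ and therefore $A^{2}=a^{2}I$. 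Expanding $A^{2}$ with the commuting relations above gives $\imath_{L}(f_{v})\imath_{R}(g_{v})=\kappa I$ with $\kappa=\tfrac12(a^{2}+f_{v}^{s}+g_{v}^{s})$, a \emph{scalar} operator. To see this is impossible I would take traces: a direct computation shows $\operatorname{tr}\big(\imath_{L}(f_{v})\imath_{R}(g_{v})\big)\equiv0$, so $4\kappa\equiv0$, whence $\kappa\equiv0$ and $\imath_{L}(f_{v})\imath_{R}(g_{v})=0$; equivalently $f_{v}*\chi*g_{v}\equiv0$ for every $\chi\in\Sm(\Omega)$. Evaluating at $\chi=f_{v}^{c}$ gives $f_{v}^{s}\,g_{v}\equiv0$, so $f_{v}^{s}\equiv0$ (because $g_{v}\not\equiv0$ by Assumption~\ref{assumpfv} and $\Sm_{\R}(\Omega)$ is a field), and symmetrically $\chi=g_{v}^{c}$ gives $g_{v}^{s}\equiv0$. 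But then $\kappa=\tfrac12a^{2}\equiv0$ forces $a\equiv0$, contradicting the hypothesis. Hence rank $2$ cannot occur and $\mathrm{rk}(\mathcal{S}_{f,g})=3$.

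I expect the crux to be this last direction, and within it the two delicate points are: first, upgrading the mere coincidence of dimensions to the structural identity $A^{2}=a^{2}I$, which is precisely where the relation $S_{f,g}^{T}=2aI-S_{f,g}$ and the hypothesis $a\not\equiv0$ enter; and second, ruling out the degenerate possibility that $\imath_{L}(f_{v})\imath_{R}(g_{v})$ is a nonzero scalar, where the vanishing of the trace and Assumption~\ref{assumpfv} (ensuring that neither $f_{v}$ nor $g_{v}$ is zero) are indispensable. Everything else is either recorded in Proposition~\ref{propdetsyl} and Proposition~\ref{ranksimeq} or reduces to the direct matrix computations already sanctioned in Section~\ref{representation}.
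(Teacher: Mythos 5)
Your proof is correct, and it takes a genuinely different route from the paper's. The paper also reduces the problem to excluding rank $2$ when $f_{0}+g_{0}\not\equiv 0$, but it does so by splitting on the algebraic multiplicity of the eigenvalue $0$ in the characteristic polynomial~\eqref{charpolcomp}: when that multiplicity is at least $2$ it extracts the conditions $f_{v}^{s}g_{v}^{s}\equiv 0$ and $(f_{0}+g_{0})^{2}+f_{v}^{s}+g_{v}^{s}\equiv 0$, and then shows that the cofactor matrix of $S_{f,g}$ cannot vanish identically: if it did, the entries of $\mbox{cof}(S_{f,g})+\mbox{cof}(S_{f,g})^{T}$ would give $f_{v}\pv g_{v}\equiv 0$, hence (by \cite[Proposition 2.10]{A-dF}) linear dependence of $f_{v}$ and $g_{v}$ over $\Sm_{\R}(\Omega)$, which is incompatible with $f_{v}^{s}\equiv 0$, $g_{v}^{s}\not\equiv 0$ and $f_{v}\not\equiv 0$. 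You replace both the case split and the cofactor inspection by the structural identity $S_{f,g}+S_{f,g}^{T}=2(f_{0}+g_{0})\,\mathrm{Id}$: equality of the ranks of $S_{f,g}$ and $S_{f,g}^{T}$ converts the assumed nullity $2$ into two $2$-dimensional eigenspaces for the distinct eigenvalues $0$ and $2(f_{0}+g_{0})$, whence $S_{f,g}^{2}=2(f_{0}+g_{0})S_{f,g}$ and so $\imath_{L}(f_{v})\imath_{R}(g_{v})$ is a scalar matrix; the trace-zero observation (which I checked directly: the four diagonal entries of $\imath_{L}(f_{v})\imath_{R}(g_{v})$ sum to zero) kills the scalar, and evaluating the resulting identity $f_{v}*\chi*g_{v}\equiv 0$ at $\chi=f_{v}^{c}$ and $\chi=g_{v}^{c}$ gives $f_{v}^{s}\equiv g_{v}^{s}\equiv 0$, forcing $f_{0}+g_{0}\equiv 0$, a contradiction. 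All your intermediate steps are sanctioned by the identities of Section~\ref{representation}, by Assumption~\ref{assumpfv}, and by standard linear algebra over the field $\Sm_{\R}(\Omega)$, so the argument is sound. What the paper's route buys is that the conditions it extracts along the way are exactly the ones reused in Section~\ref{rank3sect} to classify the rank-$3$ pairs; what your route buys is self-containedness (no appeal to \cite[Proposition 2.10]{A-dF} and no cofactor computation), a uniform treatment of the two algebraic-multiplicity cases, and the sharper intermediate conclusion that rank $2$ together with $f_{0}+g_{0}\not\equiv 0$ would force both $f_{v}^{s}$ and $g_{v}^{s}$ to vanish.
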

\begin{proof}
If $f_{0}+g_{0}\equiv 0$ we already proved that the rank of $\mathcal{S}_{f,g}$ is equal to $2$. 

Now
suppose that $f_{0}+g_{0}\not\equiv 0$ and consider the characteristic polynomial of $S_{f,g}$.
If $0$ is an eigenvalue of algebraic multiplicity $1$, then trivially the rank of $\mathcal{S}_{f,g}$ is equal to $3$.

Therefore we are left with dealing with the case in which $0$ is an eigenvalue of algebraic multiplicity at least~$2$, which by
formula~\eqref{charpolcomp} and $f_{0}+g_{0}\not\equiv 0$ yields
$$
\begin{cases}
(f_{0}+g_{0})^{2}+f_{v}^{s}+g_{v}^{s}\equiv 0\\
(f_{0}+g_{0})^{2}[(f_{0}+g_{0})^{2}+2(f_{v}^{s}+g_{v}^{s})]+(f_{v}^{s}-g_{v}^{s})^{2}\equiv 0,
\end{cases}
$$
which is equivalent to 
\begin{equation}\label{system1}
\begin{cases}
(f_{0}+g_{0})^{2}+f_{v}^{s}+g_{v}^{s}\equiv 0\\
f_{v}^{s}g_{v}^{s}\equiv 0.
\end{cases}
\end{equation}
Since $\Sm_{\R}(\Omega)$ is a field, then either $f_{v}^{s}$ or $g_{v}^{s}$ is identically zero. We perform
the computation in the first case, the second one being completely analogous. Thus System~\eqref{system1}
gives
\begin{equation*}\label{condition1}
f_{v}^{s}\equiv 0\qquad\mbox{and}\qquad(f_{0}+g_{0})^{2}+g_{v}^{s}\equiv 0.
\end{equation*}
Since $\mbox{rk}(\mathcal{S}_{f,g})=3$ if and only if the cofactor matrix of $S_{f,g}$ is not identically zero,
we suppose by contradiction that $\mbox{cof}(S_{f,g})=0$ which in particular implies $\mbox{cof}(S_{f,g})+\mbox{cof}(S_{f,g})^{T}=0$.
Up to a factor $2(f_{0}+g_{0})\not\equiv 0$, the elements of this matrix in positions $(1,2)$, $(1,3)$ and $(1,4)$
give the following system of equalities
\begin{equation*}\label{system6eq}
\begin{cases}
g_{3}f_{2}-f_{3}g_{2}\equiv 0\\
g_{1}f_{3}-f_{1}g_{3}\equiv 0\\
g_{1}f_{2}-f_{1}g_{2}\equiv 0,
\end{cases}
\end{equation*}
which means $f_{v}\pv g_{v}\equiv 0$. By~\cite[Proposition 2.10]{A-dF} this entails that $f_{v}$ and $g_{v}$ are linearly dependent over $\Sm_{\R}(\Omega)$. Nonetheless $f_{v}^{s}\equiv 0$ and $g_{v}^{s}=-(f_{0}+g_{0})^{2}\not\equiv 0$. As $f_{v}\not\equiv 0$, this is a contradiction which shows that $\mbox{rk}(\mathcal{S}_{f,g})=3$.
\end{proof}

\begin{remark}\label{rank3symmetric}
Notice that, the fact that $\mathcal{S}_{f,g}$ has rank 3 is symmetric in $f$ and $g$.
Indeed, Proposition~\ref{propdetsyl}, via Formula~\eqref{determinantsylvester}, guarantees that $\mathcal{S}_{f,g}$
is an isomorphism if and only if $\mathcal{S}_{g,f}$ is. Now it is enough to highlight that the condition on the sum of the ``real parts''
given in Theorem~\ref{rank3} is symmetric.
\end{remark}

\section{The solution of the Sylvester equation in the non-singular case}

In this section, we study the case in which $\mathcal{S}_{f,g}$ is an isomorphism, looking for the solution of the Sylvester equation $\mathcal{L}_{f,g}(\chi)=\mathfrak{b}$, given $f,g,\mathfrak{b}\in\Sm(\Omega)$. 
Some of the tools we introduce are inspired by the work of Bolotnikov~\cite{bolotnikov1,bolotnikov2}.

First of all, we notice that Proposition~\ref{propsameeq} allows us to consider the Sylvester equation only in the cases
in which neither $f$ nor $g$ are zero divisors, as a consequence of the following

\begin{lemma}\label{lemmanozerodiv}
For any $f,g\in\Sm(\Omega)$ there exists $\alpha\in\mathbb{R}$ such that neither $f+\alpha$ nor $g-\alpha$ are zero divisors.
\end{lemma}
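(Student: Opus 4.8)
The plan is to reduce the statement to a finiteness count inside the field $\Sm_{\R}(\Omega)$. Recall from \cite[Theorem 6.6]{GPSdivision} (as used earlier in the paper) that a function $h\in\Sm(\Omega)\setminus\{0\}$ is a zero divisor precisely when $h^{s}\equiv 0$; in particular, $h^{s}\not\equiv 0$ already guarantees that $h$ is invertible and hence not a zero divisor. So it suffices to exhibit $\alpha\in\R$ with $(f+\alpha)^{s}\not\equiv 0$ and $(g-\alpha)^{s}\not\equiv 0$.

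First I would write these symmetrized functions explicitly. Writing $f=f_{0}+f_{v}$ and using the orthonormal-basis formula $h^{s}=h_{0}^{2}+h_{1}^{2}+h_{2}^{2}+h_{3}^{2}$, for every real $\alpha$ one has
\begin{equation*}
(f+\alpha)^{s}=(f_{0}+\alpha)^{2}+f_{v}^{s},\qquad (g-\alpha)^{s}=(g_{0}-\alpha)^{2}+g_{v}^{s},
\end{equation*}
since adding a real constant only shifts the ``real part'' and leaves the ``vector part'' (hence $f_{v}^{s}$ and $g_{v}^{s}$) untouched.

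The key step is to view the right-hand sides as values of polynomials in an indeterminate $t$ with coefficients in $\Sm_{\R}(\Omega)$. Set
\begin{equation*}
p_{f}(t)=t^{2}+2f_{0}\,t+(f_{0}^{2}+f_{v}^{s})\in\Sm_{\R}(\Omega)[t],\qquad p_{g}(t)=t^{2}-2g_{0}\,t+(g_{0}^{2}+g_{v}^{s})\in\Sm_{\R}(\Omega)[t].
\end{equation*}
Both are monic of degree $2$. Since $\Sm_{\R}(\Omega)$ is a field, a nonzero polynomial of degree $2$ over it has at most two roots, and \emph{a fortiori} at most two roots among the real constants $\R\subset\Sm_{\R}(\Omega)$. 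Hence the bad sets
\begin{equation*}
A=\{\alpha\in\R : (f+\alpha)^{s}\equiv 0\},\qquad B=\{\alpha\in\R : (g-\alpha)^{s}\equiv 0\}
\end{equation*}
each contain at most two elements. As $\R$ is infinite, $\R\setminus(A\cup B)\neq\emptyset$, and any $\alpha$ in this complement yields $(f+\alpha)^{s}\not\equiv 0$ and $(g-\alpha)^{s}\not\equiv 0$, completing the argument.

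I expect no genuine obstacle here; the only points needing care are bookkeeping rather than substance. One must keep the zero-divisor condition separate from the possibility $f+\alpha\equiv 0$: this is automatic, because $(f+\alpha)^{s}\not\equiv 0$ forces $f+\alpha$ to be invertible, in particular nonzero. One must also use that $\R$ embeds in $\Sm_{\R}(\Omega)$ as the real constants, so that the field-theoretic bound on the roots of $p_{f}$ and $p_{g}$ genuinely limits the number of \emph{real} bad values of $\alpha$. Should one prefer to avoid the degree-$2$ root count, the same conclusion follows elementarily: two distinct bad values $\alpha_{1}\neq\alpha_{2}$ for $f$ force $2f_{0}+\alpha_{1}+\alpha_{2}\equiv 0$, so $f_{0}$ is a real constant, after which a third bad value leads at once to a contradiction, giving $\#A\le 2$ (and likewise $\#B\le 2$).
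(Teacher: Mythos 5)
Your proof is correct and rests on the same idea as the paper's: both reduce ``not a zero divisor'' to $h^{s}\not\equiv 0$, view $(f+\alpha)^{s}$ and $(g-\alpha)^{s}$ as quadratics in $\alpha$ with coefficients in the field $\Sm_{\R}(\Omega)$, and pick a real $\alpha$ outside the finitely many roots. The only (cosmetic) difference is that you treat $f$ and $g$ uniformly and make the root-counting bound explicit, whereas the paper first splits into cases according to whether $f$ is a zero divisor (using $f^{s}\equiv 0$ to simplify) and leaves the finiteness of the bad set for $g$ implicit.
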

\begin{proof}
If neither $f$ nor $g$ are zero divisors, we can take $\alpha\equiv 0$. If $f$ is a zero divisor, then $f^{s}=f_{0}^{2}+f_{v}^{s}\equiv 0$.
Now $(f+\alpha)^{s}=2\alpha f_{0}+\alpha^{2}=\alpha(2f_{0}+\alpha)\equiv 0$ if and only if either $\alpha\equiv0$ or $\alpha\equiv-\frac{f_{0}}{2}$. Since $(g-\alpha)^{s}=\alpha^{2}-2g_{0}\alpha+g^{s}$, it is enough to choose $\alpha$ any real number
such that $\alpha\neq 0$, $\alpha\not\equiv -\frac{f_{0}}{2}$ and $\alpha^{2}-2g_{0}\alpha+g^{s}\not\equiv 0$ to obtain 
that neither $f+\alpha$ nor $g-\alpha$ are zero divisors.
\end{proof}

Notice that Lemma~\ref{lemmanozerodiv} and equality~\eqref{sameeq} only deal with ``real parts'' of the functions $f$ and $g$, while Assumption~\ref{assumpfv} only deals with their ``vectorial parts'', so that they are independent.

\begin{assumption}\label{assumptionzero}
Without any loss of generality, in this section we shall consider only Sylvester operators associated to functions $f,g\not\in\Sm_{\R}(\Omega)$ none of which is a zero divisor.
\end{assumption}

We now define two functions $\lambda_L, \lambda_R\in \Sm(\Omega)$ which will be used to write explicitly the solution of $\mathcal{S}_{f,g}(\chi)=\mathfrak{b}$ when $\mathcal{S}_{f,g}$ is an isomorphism (see Theorem~\ref{explicit-solution}).

\begin{definition}\label{lambdaLR}
Let $f=f_{0}+f_{v},g=g_{0}+g_{v}\in\Sm(\Omega)$. If $f$ is not a zero divisor, we define $\lambda_{L}\in\Sm(\Omega)$, as
\begin{equation*}\label{lambdaL}
\lambda_{L}:=2g_{0}+f+g^{s}f^{-*}.
\end{equation*}
If $g$ is not a zero divisor, we define $\lambda_{R}\in\Sm(\Omega)$, as
\begin{equation*}\label{lambdaR}
\lambda_{R}:=2f_{0}+g+f^{s}g^{-*}.
\end{equation*}
\end{definition}

Notice that, if $f$ is not a zero divisor, then $\lambda_{L}\equiv 0$ if and only if $\lambda_{L}*f\equiv 0$ if and only if $f^{*2}+2g_{0}f+g^{s}\equiv 0$. Analogously, if $g$ is not a zero divisor, then $\lambda_{R}\equiv 0$ if and only if $g^{*2}+2f_{0}g+f^{s}\equiv 0$.

\begin{proposition}\label{alphaLR1}
Let $f,g\in\Sm(\Omega)$ be such that $f\simeq -g$. If $f$ (and then $g$) is not a zero divisor, then
$\lambda_{L}=\lambda_{R}\equiv 0$.
\end{proposition}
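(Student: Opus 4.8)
The plan is to sidestep any direct manipulation of the $*$-inverse $f^{-*}$ and instead use the vanishing criteria recorded immediately after Definition~\ref{lambdaLR}. Since $f$ (and hence $g$) is not a zero divisor, $\lambda_{L}\equiv0$ is equivalent to $f^{*2}+2g_{0}f+g^{s}\equiv0$, and $\lambda_{R}\equiv0$ is equivalent to $g^{*2}+2f_{0}g+f^{s}\equiv0$. Thus it suffices to establish these two identities, each of which is purely a ``scalar-vector'' computation.

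First I would extract scalar information from the hypothesis $f\simeq -g$ by applying Lemma~\ref{necessary} to the pair $f$ and $-g$. Noting that $(-g)_{0}=-g_{0}$ while $(-g)^{s}=(-g)*(-g)^{c}=g*g^{c}=g^{s}$, the lemma gives $f_{0}=-g_{0}$ and $f^{s}=g^{s}$. Combining $f^{s}=f_{0}^{2}+f_{v}^{s}$ and $g^{s}=g_{0}^{2}+g_{v}^{s}$ with $f_{0}^{2}=g_{0}^{2}$ yields in particular $f_{v}^{s}=g_{v}^{s}$.

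Next I would expand $f^{*2}+2g_{0}f+g^{s}$ via the splitting $f=f_{0}+f_{v}$, using $f_{v}^{*2}=-f_{v}^{s}$ and $g^{s}=g_{0}^{2}+g_{v}^{s}$. Collecting terms produces a vector part equal to $2(f_{0}+g_{0})f_{v}$ and a scalar part equal to $(f_{0}+g_{0})^{2}-f_{v}^{s}+g_{v}^{s}$. The relation $f_{0}+g_{0}\equiv0$ annihilates the vector part and collapses the scalar part to $g_{v}^{s}-f_{v}^{s}$, which vanishes by $f_{v}^{s}=g_{v}^{s}$. Hence $f^{*2}+2g_{0}f+g^{s}\equiv0$ and $\lambda_{L}\equiv0$. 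The argument for $\lambda_{R}$ is the mirror image: expanding $g^{*2}+2f_{0}g+f^{s}$ gives vector part $2(f_{0}+g_{0})g_{v}$ and scalar part $(f_{0}+g_{0})^{2}+f_{v}^{s}-g_{v}^{s}$, both of which vanish under the same two relations, so $\lambda_{R}\equiv0$.

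There is essentially no serious obstacle here; the whole statement reduces to a short scalar-vector computation once the right criterion is invoked. The only point requiring care is the correct application of Lemma~\ref{necessary} to the pair $(f,-g)$, tracking that conjugation by $-1$ flips the ``real part'' (so $f_{0}=-g_{0}$) but leaves the symmetrization unchanged (so $f^{s}=g^{s}$); keeping these signs straight is exactly what makes the vector parts cancel and the scalar parts reduce to $\pm(f_{v}^{s}-g_{v}^{s})$.
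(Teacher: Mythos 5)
Your proof is correct and follows essentially the same route as the paper: invoke Lemma~\ref{necessary} for the pair $(f,-g)$ to get $f_{0}\equiv -g_{0}$ and $f_{v}^{s}\equiv g_{v}^{s}$, reduce $\lambda_{L}\equiv 0$ (via the non-zero-divisor remark after Definition~\ref{lambdaLR}) to the identity $f^{*2}+2g_{0}f+g^{s}\equiv 0$, and verify it by the scalar-vector expansion, with the symmetric computation for $\lambda_{R}$. The paper's own chain of equalities is exactly your expansion, so there is nothing to add.
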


\begin{proof}
Thanks to Lemma~\ref{necessary}, we know that $f$ is a zero divisor if and only if $-g$ is; moreover, $f_{0}\equiv -g_{0}$
and $f_{v}^{s}\equiv g_{v}^{s}$. 

If $f$ is not a zero divisor,
then $\lambda_{L}\equiv 0$ if and only if $f^{*2}+2g_{0}f+g^{s}\equiv 0$. The following chain of equalities yields that $\lambda_L \equiv 0$:
$$
f^{*2}+2g_{0}f+g^{s}=f_{0}^{2}-f_{v}^{s}+2f_{0}f_{v}+2g_{0}f_{0}+2g_{0}f_{v}+g_{0}^{2}+g_{v}^{s}=(f_{0}+g_{0})^{2}+2(f_{0}+g_{0})f_{v}+g_{v}^{s}-f_{v}^{s}\equiv 0.
$$
The equality $\lambda_R \equiv 0$ follows by similar computations.
\end{proof}

We now give a partial converse of the previous proposition.

\begin{proposition}
Let $\Omega$ be a slice domain and $f=f_{0}+f_{v},g=g_{0}+g_{v}\in\Sm(\Omega)\setminus\{0\}$. Then $f\simeq-g$ if and only if $\lambda_{L}\equiv0$
if and only if $\lambda_{R}\equiv 0$.
\end{proposition}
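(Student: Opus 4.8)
The plan is to deduce the three-way equivalence by pairing the implication $f\simeq-g\Rightarrow\lambda_L\equiv\lambda_R\equiv0$, which is exactly Proposition~\ref{alphaLR1}, with a converse for $\lambda_L$ and a symmetric converse for $\lambda_R$. Since $\Omega$ is a slice domain, $\Sm(\Omega)$ is a division algebra, so the nonzero $f,g$ are not zero divisors and both $\lambda_L$ and $\lambda_R$ are defined; moreover the standing Assumption~\ref{assumptionzero} guarantees $f,g\notin\Sm_\R(\Omega)$, i.e. $f_v\not\equiv0$ and $g_v\not\equiv0$. The forward direction is then immediate: $f\simeq-g$ yields $\lambda_L\equiv0$ and $\lambda_R\equiv0$ by Proposition~\ref{alphaLR1}.

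For the converse I would start from the reformulation recorded after Definition~\ref{lambdaLR}, namely $\lambda_L\equiv0$ if and only if $f^{*2}+2g_0f+g^s\equiv0$. The useful manipulation is to remove the square via $f^c=2f_0-f$, which gives $f^s=f*f^c=2f_0f-f^{*2}$, hence $f^{*2}=2f_0f-f^s$. Substituting, the condition $\lambda_L\equiv0$ becomes
\[
2(f_0+g_0)\,f=f^s-g^s.
\]
The right-hand side belongs to $\Sm_\R(\Omega)$, so the left-hand side is slice preserving as well. Splitting $2(f_0+g_0)f=2(f_0+g_0)f_0+2(f_0+g_0)f_v$ and recalling that $\Sm_\R(\Omega)$ is the center of $\Sm(\Omega)$, the vector part $2(f_0+g_0)f_v$ must vanish. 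Because $\Sm(\Omega)$ has no zero divisors and $f_v\not\equiv0$, this forces $f_0+g_0\equiv0$, and the displayed identity then collapses to $f^s\equiv g^s$.

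Finally I would invoke Corollary~\ref{corollaryequivalence}: using $(-g)_0=-g_0$ and $(-g)^s=g^s$, the conditions $f_0\equiv-g_0$ and $f^s\equiv g^s$ are precisely $f_0\equiv(-g)_0$ and $f^s\equiv(-g)^s$, which in a slice domain give $f\simeq-g$. The implication $\lambda_R\equiv0\Rightarrow f\simeq-g$ is obtained verbatim after exchanging $f$ and $g$, starting from $\lambda_R\equiv0\iff g^{*2}+2f_0g+f^s\equiv0$ and $g^{*2}=2g_0g-g^s$. Chaining $f\simeq-g\Rightarrow\lambda_L\equiv0\Rightarrow f\simeq-g$ and the analogous chain for $\lambda_R$ closes all three equivalences. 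The one step needing care is the passage from $2(f_0+g_0)f_v\equiv0$ to $f_0+g_0\equiv0$: it is exactly here that the slice-domain hypothesis (ensuring $\Sm(\Omega)$ is a division algebra) together with $f\notin\Sm_\R(\Omega)$ enters, and it is the reason the statement is phrased for slice domains.
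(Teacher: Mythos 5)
Your core computation is correct and is essentially the paper's own argument: rewriting $f^{*2}=2f_0f-f^s$ and splitting $2(f_0+g_0)f\equiv f^s-g^s$ into ``real'' and ``vector'' parts gives exactly the paper's system (vector part $2(f_0+g_0)f_v\equiv0$; your real-part identity is an algebraic rearrangement of the paper's $f_0^2-f_v^s+2g_0f_0+g_0^2+g_v^s\equiv0$), and the conclusion via Corollary~\ref{corollaryequivalence} applied to the pair $(f,-g)$, together with the forward direction via Proposition~\ref{alphaLR1}, matches the paper step for step.

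The gap is your dismissal of the case $f_v\equiv0$. The proposition is stated for all $f,g\in\Sm(\Omega)\setminus\{0\}$, so nonzero slice preserving functions are allowed; Assumption~\ref{assumptionzero} is a convention about which Sylvester operators $\mathcal{S}_{f,g}$ are considered in that section, and it cannot be used to shrink the hypotheses of a statement that explicitly re-declares them --- indeed the paper's own proof does not invoke it and treats $f_v\equiv0$ as a live case. When $f_v\equiv0$ is permitted, $2(f_0+g_0)f_v\equiv0$ no longer forces $f_0+g_0\equiv0$, and you must dispose of the branch $f_0+g_0\not\equiv0$, $f_v\equiv0$. The paper does this with the real part of the identity, which in that branch becomes $(f_0+g_0)^2+g_v^s\equiv0$: this is impossible on a slice domain, since at real points $(f_0+g_0)^2\geq0$ and $g_v^s\geq0$, while $(f_0+g_0)^2$ can vanish only on a discrete set. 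Incidentally, this also corrects your closing remark about where the slice-domain hypothesis ``really'' enters: the step $2(f_0+g_0)f_v\equiv0\Rightarrow f_0+g_0\equiv0$ (when $f_v\not\equiv0$) needs only that $\Sm_\R(\Omega)$ is a field, which holds on any circular domain; what genuinely requires $\Omega$ to be a slice domain is, first, Corollary~\ref{corollaryequivalence} itself (its product-domain analogue, Corollary~\ref{sufficiencyproduct}, is established only later in the paper, and only for $f,g\notin\Sm_\R(\Omega)$), and second, the positivity-at-real-points argument that kills the degenerate branch above.
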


\begin{proof}
First of all notice that, being $\Omega$ a slice domain and $f,g\not\equiv 0$, both $\lambda_{L}$ and $\lambda_{R}$
are well defined. Thanks to Proposition~\ref{alphaLR1}, we are left with proving that $\lambda_{L}\equiv 0$ implies $f\simeq -g$.
If $\lambda_{L}\equiv 0$, we have that $f^{*2}+2g_{0}f+g^{s}\equiv 0$. Last quantity can also be written as
$f_{0}^{2}-f_{v}^{s}+2f_{0}f_{v}+2g_{0}f_{0}+2g_{0}f_{v}+g_{0}^{2}+g_{v}^{s}$ and hence, by splitting in ``real'' and ``vector'' parts, we obtain the following system of equations
\begin{equation}\label{systemgammaL}
\begin{cases}
f_{0}^{2}-f_{v}^{s}+2g_{0}f_{0}+g_{0}^{2}+g_{v}^{s}\equiv 0\\
2(f_{0}+g_{0})f_{v}\equiv 0.
\end{cases}
\end{equation}
Since $\Sm_{\R}(\Omega)$ is a field, the second equation is satisfyied if and only if either $f_{0}\equiv -g_{0}$ or $f_{v}\equiv 0$.
If $f_{0}\equiv -g_{0}$, the first equation of system~\eqref{systemgammaL} becomes $-f_{v}^{s}+g_{v}^{s}\equiv 0$, that is
$g_{v}^{s}\equiv f_{v}^{s}$ and corollary~\ref{corollaryequivalence} entails $f\simeq -g$.
If $f_{0}+g_{0}\not\equiv 0$, then $f_{v}\equiv 0$. The first equation of system~\eqref{systemgammaL} then becomes
$(f_{0}+g_{0})^{2}+g_{v}^{s}\equiv 0$ which is a contradiction to the fact that $\Omega$ contains real points, where $(f_{0}+g_{0})^{2}\geq0$, $g_{v}^{s}\geq 0$ and $(f_{0}+g_{0})^{2}=0$ only occurs on a discrete set.\end{proof}

If $\mathcal{S}_{f,g}$ is an isomorphism we are now able to write explicitly the solution of $\mathcal{S}_{f,g}(\chi)=\mathfrak{b}$. 
Recall that, by Assumption~\ref{assumptionzero}, neither $f$ nor $g$ are zero divisors.

\begin{theorem}\label{explicit-solution}
Let $f,g\in\Sm(\Omega)$ be such that $\mathcal{S}_{f,g}$ is an isomorphism.
Then for any
$\mathfrak{b}\in\Sm(\Omega)$, the unique solution of $\mathcal{S}_{f,g}(\chi)=\mathfrak{b}$ is given by
$$
\chi=\lambda_{L}^{-*}*(\mathfrak{b}+f^{-*}*\mathfrak{b}*g^{c})=(\mathfrak{b}+f^{c}*\mathfrak{b}*g^{-*})*\lambda_{R}^{-*},
$$
where $\lambda_{L}$ and $\lambda_{R}$ are given by Definition~\ref{lambdaLR}.
\end{theorem}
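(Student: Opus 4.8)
The plan is to treat the two displayed expressions as identities to be verified; the substantive point is not the verification itself but ensuring that $\lambda_{L}$ and $\lambda_{R}$ are invertible, so that $\lambda_{L}^{-*}$ and $\lambda_{R}^{-*}$ belong to $\Sm(\Omega)$. By Assumption~\ref{assumptionzero} neither $f$ nor $g$ is a zero divisor, so $f^{-*}$ and $g^{-*}$ exist; I would treat $\lambda_{L}$ first, the argument for $\lambda_{R}$ being symmetric.

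For invertibility I would use the remark following Definition~\ref{lambdaLR}, namely $\lambda_{L}*f=f^{*2}+2g_{0}f+g^{s}$. Writing $f=f_{0}+f_{v}$ and using $f_{v}*f_{v}=-f_{v}^{s}$ and $g^{s}=g_{0}^{2}+g_{v}^{s}$, this product has real part $(f_{0}+g_{0})^{2}+g_{v}^{s}-f_{v}^{s}$ and vector part $2(f_{0}+g_{0})f_{v}$. The key step is to symmetrize it: since $f_{0}+g_{0}$ and $g_{v}^{s}-f_{v}^{s}$ lie in $\Sm_{\R}(\Omega)$ and $(2(f_{0}+g_{0})f_{v})^{s}=4(f_{0}+g_{0})^{2}f_{v}^{s}$, one obtains
$$
(\lambda_{L}*f)^{s}=(f_{0}+g_{0})^{2}\big[(f_{0}+g_{0})^{2}+2(f_{v}^{s}+g_{v}^{s})\big]+(f_{v}^{s}-g_{v}^{s})^{2},
$$
which is exactly $\det(S_{f,g})$ by Proposition~\ref{propdetsyl}. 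Since $\mathcal{S}_{f,g}$ is an isomorphism this quantity is not identically zero, and from $\lambda_{L}^{s}\,f^{s}=(\lambda_{L}*f)^{s}\not\equiv0$ I conclude $\lambda_{L}^{s}\not\equiv0$, that is, $\lambda_{L}$ is not a zero divisor; hence $\lambda_{L}^{-*}\in\Sm(\Omega)$, and similarly $\lambda_{R}^{-*}\in\Sm(\Omega)$.

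With invertibility secured I would not substitute the formula but rather derive it from the equation, which is cleaner. Let $\chi$ be the unique solution of $f*\chi+\chi*g=\mathfrak{b}$, which exists because $\mathcal{S}_{f,g}$ is an isomorphism. Using $f^{-*}*f=1$ together with the centrality of $g*g^{c}=g^{s}$,
$$
f^{-*}*\mathfrak{b}*g^{c}=f^{-*}*(f*\chi+\chi*g)*g^{c}=\chi*g^{c}+g^{s}(f^{-*}*\chi),
$$
so that, adding $\mathfrak{b}=f*\chi+\chi*g$ and using $g+g^{c}=2g_{0}$,
$$
\mathfrak{b}+f^{-*}*\mathfrak{b}*g^{c}=f*\chi+2g_{0}\chi+g^{s}(f^{-*}*\chi)=(f+2g_{0}+g^{s}f^{-*})*\chi=\lambda_{L}*\chi,
$$
whence $\chi=\lambda_{L}^{-*}*(\mathfrak{b}+f^{-*}*\mathfrak{b}*g^{c})$. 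The second expression follows by the mirror computation: using $f^{c}*f=f^{s}$ and $g*g^{-*}=1$ gives $f^{c}*\mathfrak{b}*g^{-*}=f^{c}*\chi+\chi*(f^{s}g^{-*})$, and collecting on the right with $f+f^{c}=2f_{0}$ yields $\mathfrak{b}+f^{c}*\mathfrak{b}*g^{-*}=\chi*\lambda_{R}$, i.e. $\chi=(\mathfrak{b}+f^{c}*\mathfrak{b}*g^{-*})*\lambda_{R}^{-*}$.

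I expect the only non-routine step to be the symmetrization identity $(\lambda_{L}*f)^{s}=\det(S_{f,g})$, which is precisely what ties the invertibility of $\lambda_{L}$ to the hypothesis that $\mathcal{S}_{f,g}$ is an isomorphism; everything else is bookkeeping exploiting that $\Sm_{\R}(\Omega)$ is the center of $\Sm(\Omega)$.
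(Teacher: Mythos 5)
Your proposal is correct and follows essentially the same route as the paper: invertibility of $\lambda_{L}$ (resp.\ $\lambda_{R}$) is obtained by showing that $(f*\lambda_{L})^{s}$ equals the determinant expression of Proposition~\ref{propdetsyl}, and the formula itself comes from the identity $\mathfrak{b}+f^{-*}*\mathfrak{b}*g^{c}=\lambda_{L}*\chi$ (and its mirror), exactly as in the paper. The only cosmetic difference is that you compute $(\lambda_{L}*f)^{s}$ via the scalar--vector splitting $h^{s}=h_{0}^{2}+h_{v}^{s}$, which is a tidier bookkeeping of the paper's ``long but straightforward'' expansion.
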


\begin{proof} 
As $f$ and $g$ are not zero divisors, then both $\lambda_{L}$ and $\lambda_{R}$ are well defined.
We now prove that both $\lambda_{L}^{s}$ and $\lambda_{R}^{s}$ are not identically zero.
Since $f$ is not a zero divisor, then $\lambda_{L}$ is invertible if and only if $\lambda_{L}^{s}\not\equiv 0$ if and only if $(f*\lambda_{L})^{s}\not\equiv 0$.
Now we have 
\begin{align*}
(f*\lambda_{L})^{s}=&(2g_{0}f +f^{*2}+g^{s})^{s}=4g_{0}^{2}f^{s}+ f^{2s}+g^{2s}+4g_{0}\langle f,f^{*2}\rangle_{*}+4g_{0}g^{s}f_{0}+2g^{s}(f^{*2})_{0}\\
=&4g_{0}^{2}f_{0}^{2}+ 4g_{0}^{2}f_{v}^{s}+f_{0}^{4}+2f_{0}^{2}f_{v}^{s}+ g_{0}^{4}+2g_{0}^{2}g_{v}^{s}+4g_{0}f_{0}^{3}+4g_{0}f_{0}f_{v}^{s}\\
&  \quad\quad\,\,\,+4g_{0}^{3}f_{0}+4f_{0}g_{0}g_{v}^{s} +2g_{0}^{2}f_{0}^{2}-2g_{0}^{2}f_{v}^{s}+2f_{0}^{2}g_{v}^{s}+(f_{v}^{s})^{2}-2f_{v}^{s}g_{v}^{s}
+(g_{v}^{s})^{2}\\
=&(f_{0}+g_{0})^{4} +2[(g_{0}^{2}+f_{0}^{2}+2f_{0}g_{0})f_{v}^{s} +(g_{0}^{2}+2g_{0}f_{0}+f_{0}^{2})g_{v}^{s}]
+(f_{v}^{s}-g_{v}^{s})^{2}\\
=&(f_{0}+g_{0})^{2}[(f_{0} +g_{0})^{2}+2(f_{v}^{s}+g_{v}^{s})] +(f_{v}^{s}-g_{v}^{s})^{2}.
\end{align*}
As $\mathcal{S}_{f,g}$ is an isomorphism, by Proposition~\ref{propdetsyl} we have that last term is not identically zero and
hence $\lambda_{L}$ is invertible. 
An analogous computation gives that $\lambda_{R}$ is invertible.


Now, for any $\chi\in\Sm(\Omega)$ we have the following chain of equalities
\begin{align*}
f^{-*}*\mathcal{S}_{f,g}(\chi)*g^{c}+\mathcal{S}_{f,g}(\chi)&=f^{-*}*(f*\chi+\chi*g)*g^{c}+f*\chi+\chi*g\\
&=\chi*g^{c}+f^{-*}*\chi*g^{s}+f*\chi+\chi*g\\
&=\chi(g+g^{c})+g^{s}f^{-*}*\chi+f*\chi\\
&=2g_{0}\chi+g^{s}f^{-*}*\chi+f*\chi=(2g_{0}+g^{s}f^{-*}+f)*\chi=\lambda_{L}*\chi.
\end{align*}
Therefore, if $\chi$ is the solution of $\mathcal{S}_{f,g}(\chi)=\mathfrak{b}$, we obtain $f^{-*}*\mathfrak{b}*g^{c}+\mathfrak{b}=\lambda_{L}*\chi$,
which gives 
$$\chi=\lambda_{L}^{-*}*(f^{-*}*\mathfrak{b}*g^{c}+\mathfrak{b}).$$
The second equality of the statement is obtained analogously.
\end{proof}

\section{Sylvester operators of rank $2$}\label{rank2section}
We now consider the case when the Sylvester operator $\mathcal{S}_{f,g}$ has rank 2;  by Proposition~\ref{ranksimeq} and Theorem~\ref{rank3} this means exactly that $f_{0}=-g_{0}$ and $f_{v}^{s}=g_{v}^{s}$ (we recall that, by Assumption~\ref{assumpfv}, both $f_{v}$ and $g_{v}$ are not identically zero). Next statement describes the kernel of $\mathcal{S}_{f,g}$ under the conditions $f_{0}=-g_{0}$ and $f_{v}^{s}=g_{v}^{s}$.

\begin{theorem}\label{kerthm}
Let $f,g\in\Sm(\Omega)$ be such that $f_{0}=-g_{0}$ and $f_{v}^{s}=g_{v}^{s}$. Then
\begin{equation}\label{ker}
\ker(\mathcal{S}_{f,g})=\{f*h+h*g^{c}\,|\,h\in\Sm(\Omega)\}.
\end{equation}
Moreover, it is possible to find a basis of $\ker(\mathcal{S}_{f,g})$ consisting of invertible elements.
\end{theorem}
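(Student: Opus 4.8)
The statement splits into the identity~\eqref{ker} and the existence of an invertible basis, and I would treat these in turn.

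\emph{The identity.} The inclusion ``$\supseteq$'' is direct: for every $h$ one has $\mathcal{S}_{f,g}(f*h+h*g^{c})=f^{*2}*h+f*h*(g+g^{c})+h*(g^{c}*g)=(f^{*2}+2g_{0}f+g^{s})*h$, and splitting $f^{*2}+2g_{0}f+g^{s}$ into scalar and vector parts gives $(f_{0}+g_{0})^{2}+g_{v}^{s}-f_{v}^{s}$ and $2(f_{0}+g_{0})f_{v}$, both $\equiv0$ under $f_{0}=-g_{0}$ and $f_{v}^{s}=g_{v}^{s}$. Thus the right-hand side of~\eqref{ker}, which is exactly $\mathrm{im}(\mathcal{S}_{f,g^{c}})$, lies in $\ker(\mathcal{S}_{f,g})$. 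Since $(g^{c})_{0}=g_{0}$ and $((g^{c})_{v})^{s}=g_{v}^{s}=f_{v}^{s}$, the operator $\mathcal{S}_{f,g^{c}}$ obeys the same hypotheses, so by Proposition~\ref{ranksimeq} and Theorem~\ref{rank3} it has rank $2$; hence $\dim\mathrm{im}(\mathcal{S}_{f,g^{c}})=2=\dim\ker(\mathcal{S}_{f,g})$, and the inclusion of equidimensional spaces is an equality.

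\emph{Reduction for the basis.} Recall that $\chi$ is invertible iff $\chi^{s}\not\equiv0$, and that in $\mathcal{B}$-coordinates $\chi^{s}=\sum_{i}\chi_{i}^{2}$, so $\langle\cdot,\cdot\rangle_{*}$ is the standard dot product and invertibility is non-isotropy. I would first reduce to producing a single invertible kernel element: given invertible $\chi_{1}\in\ker(\mathcal{S}_{f,g})$ and any $\chi_{2}$ completing a basis, the map $t\mapsto(\chi_{2}+t\chi_{1})^{s}=\chi_{2}^{s}+2t\langle\chi_{2},\chi_{1}\rangle_{*}+t^{2}\chi_{1}^{s}$ is a degree-two polynomial over the infinite field $\Sm_{\R}(\Omega)$ with leading coefficient $\chi_{1}^{s}\not\equiv0$, hence nonzero for some $t_{0}$, and $\{\chi_{1},\chi_{2}+t_{0}\chi_{1}\}$ is a basis of invertibles. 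The key observation is that, since $f_{0}=-g_{0}$ makes $S_{f,g}$ skew-symmetric (as in the proof of Proposition~\ref{ranksimeq}), one has $\mathrm{im}(\mathcal{S}_{f,g})=\ker(\mathcal{S}_{f,g})^{\perp}$; therefore $\ker(\mathcal{S}_{f,g})$ is totally isotropic iff $\ker(\mathcal{S}_{f,g})=\mathrm{im}(\mathcal{S}_{f,g})$ iff $\mathcal{S}_{f,g}^{2}=0$. Consequently $\ker(\mathcal{S}_{f,g})$ contains an invertible element if and only if $\mathcal{S}_{f,g}^{2}\neq0$, and this is what remains to be shown.

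\emph{Showing $\mathcal{S}_{f,g}^{2}\neq0$, and the obstacle.} By Proposition~\ref{propdetsyl} the characteristic polynomial here is $p(\lambda)=\lambda^{2}(\lambda^{2}+4f_{v}^{s})$. If $f_{v}^{s}\not\equiv0$, then $\mathcal{S}_{f,g}^{2}=0$ would force $p(\lambda)=\lambda^{4}$, i.e. $f_{v}^{s}\equiv0$, a contradiction, so $\mathcal{S}_{f,g}^{2}\neq0$. The genuinely special case, occurring only on product domains, is $f_{v}^{s}\equiv0$ (so both $f_{v}$ and $g_{v}$ are isotropic and every natural kernel element $f_{v}-g_{v}$, $f_{v}\pv g_{v}$ is a zero divisor); here a short computation gives $\mathcal{S}_{f,g}^{2}(1)=-2\langle f_{v},g_{v}\rangle_{*}+2\,f_{v}\pv g_{v}$, whose vector part $2\,f_{v}\pv g_{v}$ is nonzero whenever $f_{v},g_{v}$ are $\Sm_{\R}(\Omega)$-independent, by~\cite{A-dF}. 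When instead $g_{v}=\mu f_{v}$ with $\mu\in\Sm_{\R}(\Omega)$, the operator collapses to $\mathcal{S}_{f,g}(\chi)=f_{v}*\chi+\mu\,\chi*f_{v}$, whence $\mathcal{S}_{f,g}^{2}(\chi)=2\mu\,f_{v}*\chi*f_{v}$ since $f_{v}^{*2}=-f_{v}^{s}=0$; here $\mu\neq0$ (else $g\in\Sm_{\R}(\Omega)$, against Assumption~\ref{assumpfv}) and $f_{v}*\chi*f_{v}$ is a nonzero operator because $f_{v}\neq0$ (evaluate on a unit $\delta$ with $(f_{v}\delta)_{0}\not\equiv0$, cf. Remark~\ref{rkdelta} and Proposition~\ref{propzerodividem}), so once more $\mathcal{S}_{f,g}^{2}\neq0$. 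The main difficulty is precisely recognising that the search for an invertible kernel element is equivalent to the clean algebraic condition $\mathcal{S}_{f,g}^{2}\neq0$, after which only the degenerate case $f_{v}^{s}\equiv0$ with $f_{v},g_{v}$ dependent needs separate, elementary care.
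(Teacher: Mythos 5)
Your proposal is correct, and it departs from the paper's proof in both halves, so it is worth comparing the two routes. For the identity~\eqref{ker}, the paper proves the inclusion ``$\supseteq$'' by a similar direct computation, but then obtains equality by exhibiting two explicitly independent elements of the right-hand side, with a case split: for $f_{v}\neq g_{v}$ it takes $f_{v}*1-1*g_{v}$ and $f_{v}*\delta-\delta*g_{v}$ with $\langle g_{v}-f_{v},\delta\rangle_{*}\not\equiv0$, and for $f_{v}=g_{v}$ it takes $f_{v}\pv I$ and $f_{v}\pv J$. Your identification of the right-hand side with $\mathrm{im}(\mathcal{S}_{f,g^{c}})$, to which Proposition~\ref{ranksimeq} applies again because $(g^{c})_{0}=g_{0}=-f_{0}$ and $((g^{c})_{v})^{s}=g_{v}^{s}=f_{v}^{s}$, replaces that case analysis by a double application of rank--nullity over the field $\Sm_{\R}(\Omega)$. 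For the existence of an invertible kernel element, the paper argues by contradiction: if none existed, then $(f_{v}*\delta-\delta*g_{v})^{s}\equiv0$ for every unit $\delta$, and evaluation at $\delta=1,i,j,k$ yields a linear system forcing $f_{v}\equiv0$ or $g_{v}\equiv0$, against Assumption~\ref{assumpfv}. You instead exploit skew-symmetry of $S_{f,g}$ to get $\mathrm{im}(\mathcal{S}_{f,g})=\ker(\mathcal{S}_{f,g})^{\perp}$, so that absence of invertible kernel elements is equivalent to total isotropy of the kernel, hence to $\mathcal{S}_{f,g}^{2}=0$; you then exclude this via the characteristic polynomial $\lambda^{2}(\lambda^{2}+4f_{v}^{s})$ when $f_{v}^{s}\not\equiv0$, and by direct computation in the two degenerate subcases of $f_{v}^{s}\equiv0$. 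Your compressed final step does close: with $(f_{v}\delta)_{0}\not\equiv0$ one gets $f_{v}*\delta*f_{v}=(f_{v}\delta)*f_{v}=2(f_{v}\delta)_{0}f_{v}\not\equiv0$, writing $f_{v}\delta=2(f_{v}\delta)_{0}\sigma$ as in Proposition~\ref{propzerodividem} and using $\sigma*\sigma=\sigma$. What each route buys: yours is more structural -- the duality $\mathrm{im}=\ker^{\perp}$ explains \emph{why} total isotropy is the only obstruction and confines the genuinely delicate analysis to the product-domain case $f_{v}^{s}\equiv0$ -- whereas the paper's computation produces explicit kernel generators such as $f_{v}-g_{v}$, which it reuses immediately afterwards in Corollary~\ref{specialbasis} and in the classification of zero divisors in the kernel. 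The completion of one invertible element to a basis of invertible elements is essentially the same argument in both.
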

\begin{proof}
Notice that, since $f_{0}=-g_{0}$, for any $h\in\Sm(\Omega)$ we have $\mathcal{S}_{f,g}=\mathcal{S}_{f_{v},g_{v}}$ and $f*h+h*g^{c}=f_{v}*h-h*g_{v}$. Then
\begin{align*}
\mathcal{S}_{f,g}(f_{v}*h-h*g_{v})&=f_{v}*(f_{v}*h-h*g_{v})-(f_{v}*h-h*g_{v})*g_{v}\\
&=-f_{v}^{s}*h-f_{v}*h*g_{v}+f_{v}*h*g_{v}+h*g_{v}^{s}\equiv 0.
\end{align*}
The hypotheses on $f$ and $g$ together with Proposition~\ref{ranksimeq} guarantee that in order
to prove the equality of the two subspaces in formula~\eqref{ker}, it is enough to show that the $\Sm_{\R}(\Omega)$-linear subspace $\{f_{v}*h-h*g_{v}\,|\,h\in\Sm(\Omega)\}$ has dimension at least 2.
If $h=h_{0}+h_{v}$ we have 
\begin{align*}
f_{v}*h-h*g_{v}&=h_{0}(f_{v}-g_{v})-\langle f_{v},h_{v}\rangle_{*}+f_{v}\pv h_{v}+\langle g_{v}, h_{v}\rangle_{*}-h_{v}\pv g_{v}\\
&=\langle g_{v}-f_{v},h_{v}\rangle_{*}+\left[h_{0}(f_{v}-g_{v})+(f_{v}+g_{v})\pv h_{v}\right],
\end{align*}
where the first summand belongs to $\Sm_{\R}(\Omega)$ and the second has ``real part'' equal to zero.
If $f_{v}\neq g_{v}$ we take $\delta\in\SF$ such that $\langle g_{v}-f_{v}, \delta\rangle_{*}\not\equiv 0$.
Then $f_{v}*1-1*g_{v}$ and $f_{v}*\delta-\delta*g_{v}$ are linearly independent since the first has ``real part'' equal to zero
and it is not identically zero, while the second has ``real part'' equal to $\langle g_{v}-f_{v}, \delta\rangle_{*}\not\equiv 0$.
If $f_{v}=g_{v}$, we have $f_{v}*h-h*g_{v}=2f_{v}\pv h_{v}$. As $f_{v}\not\equiv 0$, we can find
two imaginary units $I,J\in\SF$, such that $2f_{v}\pv I$ and $2f_{v}\pv J$ are linearly independent, showing that
$\{f_{v}*h-h*g_{v}\,|\,h\in\Sm(\Omega)\}$ has dimension at least $2$ and thus proving equality~\eqref{ker}.

We now prove the existence of a basis of invertible elements. We start by computing explicitly $(f_{v}*h-h*g_{v})^{s}$;
for $h\in\Sm(\Omega)$ we have
$$(f_{v}*h-h*g_{v})^{s}=f_{v}^{s}h^{s}+g_{v}^{s}h^{s}-2\langle f_{v}*h,h*g_{v}\rangle_{*}= 2(f_{v}^{s}h^{s}-\langle f_{v}*h,h*g_{v}\rangle_{*}).
$$
For any unitary $\delta\in\HH$, we set $h\equiv \delta$ and find 
\begin{equation*}\label{basiseq}
(f_{v}*\delta-\delta*g_{v})^{s}=2(f_{v}^{s}-\langle f_{v}*\delta,\delta*g_{v}\rangle_{*})=2(f_{v}^{s}-\langle f_{v},\delta*g_{v}*\delta^{c}\rangle_{*}).
\end{equation*}
First of all we want to show that there exists an invertible element in $\ker(\mathcal{S}_{f,g})$. Indeed, if this is not, we have that
$(f_{v}*\delta-\delta*g_{v})^{s}\equiv 0$ for any unitary $\delta\in\HH$. In particular, choosing $\delta=1,i,j,k$, we obtain
$$
\begin{cases}
f_{v}^{s}\equiv \langle f_{v},g_{v}\rangle_{*}\equiv f_{1}g_{1}+f_{2}g_{2}+f_{3}g_{3}\\
f_{v}^{s}\equiv\langle f_{v},-i*g_{v}*i\rangle_{*}\equiv f_{1}g_{1}-f_{2}g_{2}-f_{3}g_{3}\\
f_{v}^{s}\equiv\langle f_{v},-j*g_{v}*j\rangle_{*}\equiv -f_{1}g_{1}+f_{2}g_{2}-f_{3}g_{3}\\
f_{v}^{s}\equiv\langle f_{v},-k*g_{v}*k\rangle_{*}\equiv -f_{1}g_{1}-f_{2}g_{2}+f_{3}g_{3}.
\end{cases}
$$
Adding up all four equations we find $f_{v}^{s}(=g_{v}^{s})\equiv 0$. Adding up the first equation with the second, third and fourth one,
we find $f_{1}g_{1}\equiv 0$, $f_{2}g_{2}\equiv 0$ and $f_{3}g_{3}\equiv 0$. Since $\Sm_{\R}(\Omega)$ is a field, at least one
between $f_{v}$ and $g_{v}$ has two components which are identically zero. This, together with $f_{v}^{s}(=g_{v}^{s})\equiv 0$,
implies that either $f_{v}\equiv 0$ or $g_{v}\equiv 0$, contradicting Assumption~\ref{assumpfv}.

Since we found an invertible element $\tau_{1}\in\ker(\mathcal{S}_{f,g})$ we can complete it to a basis $(\tau_{1},\tau_{2})$.
If both $\tau_{1}$ and $\tau_{2}$ are invertible, we are done. Otherwise consider the following linear combination: $\alpha\tau_{1}+\tau_{2}$ which is linearly independent from $\tau_{1}$ for any $\alpha\in\Sm_{\R}(\Omega)$.
We have 
$$(\alpha\tau_{1}+\tau_{2})^{s}=\alpha^{2}\tau_{1}^{s}+2\alpha\langle \tau_{1},\tau_{2}\rangle_{*}=\alpha(\alpha\tau_{1}^{s}+2\langle \tau_{1},\tau_{2}\rangle_{*}).$$ 
Therefore it is enough to chose $\alpha\not\equiv 0$ and $\alpha\not\equiv 2\tau_{1}^{-s}\langle \tau_{1},\tau_{2}\rangle_{*}$ to obtain the required basis.
\end{proof}

The full strength of Theorem~\ref{kerthm} discloses in the following corollary which states that two functions $f,g\in\Sm(\Omega)\setminus\Sm_{\R}(\Omega)$
are equivalent if and only if $f_{0}\equiv g_{0}$ and $f_{v}^{s}\equiv g_{v}^{s}$. 
Indeed, the existence of an invertible element in $\ker(\mathcal{S}_{f,g})$ implies that $f$ and $g$ are equivalent; thus an operatorial result is applied to function  theory in order to give a necessary and sufficient condition for the equivalence of a couple of slice semi-regular functions (compare with Lemma~\ref{necessary} which contains the necessary condition). 

\begin{corollary}\label{sufficiencyproduct}
Let $f,g\in\Sm(\Omega)\setminus\Sm_{\R}(\Omega)$ be such that $f_{0}\equiv g_{0}$ and $f_{v}^{s}\equiv g_{v}^{s}$. Then $f\simeq g$.
\end{corollary}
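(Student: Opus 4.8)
The plan is to reduce the corollary to Theorem~\ref{kerthm} by applying that result to the pair $(f,-g)$ instead of $(f,g)$. The key observation is that an invertible element of $\ker(\mathcal{S}_{f,-g})$ is exactly what witnesses the relation $f\simeq g$: if $\chi\in\Sm(\Omega)$ is invertible and $\mathcal{S}_{f,-g}(\chi)=f*\chi-\chi*g\equiv 0$, then $f*\chi=\chi*g$, whence $f=\chi*g*\chi^{-*}$. Setting $h=\chi^{-*}$ (so that $h^{-*}=\chi$, since the $*$-inverse is an involution on invertible elements) gives $f=h^{-*}*g*h$, which is precisely the definition of $f\simeq g$.

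First I would verify that the pair $(f,-g)$ meets the hypotheses of Theorem~\ref{kerthm}. Writing $(-g)_0=-g_0$ and $(-g)_v=-g_v$, the assumption $f_0\equiv g_0$ gives $f_0\equiv-(-g)_0$, while $f_v^s\equiv g_v^s$ gives $f_v^s\equiv(-g)_v^s$, because the symmetrized function of the vector part is insensitive to a global change of sign, i.e. $(-g)_v^s=g_v^s$. These are exactly the two conditions required by Theorem~\ref{kerthm}. Moreover, since $f,g\notin\Sm_{\R}(\Omega)$, both $f_v$ and $(-g)_v=-g_v$ are not identically zero, so Assumption~\ref{assumpfv} holds for $(f,-g)$ and the theorem is indeed applicable.

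Applying Theorem~\ref{kerthm} to $(f,-g)$ then yields that $\ker(\mathcal{S}_{f,-g})$ admits a basis consisting of invertible elements; in particular it contains at least one invertible $\chi$. Feeding this $\chi$ into the observation of the first paragraph produces the invertible intertwiner $h=\chi^{-*}$ with $f=h^{-*}*g*h$, establishing $f\simeq g$. No genuine obstacle remains once Theorem~\ref{kerthm} is available: the corollary is merely a translation of the existence of an invertible kernel element (the substantive content of that theorem, compatibly with Proposition~\ref{isomorphism-sylv}) into the language of the conjugacy relation $\simeq$, and the only delicate point is the sign bookkeeping in passing from $g$ to $-g$.
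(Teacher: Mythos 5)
Your proof is correct and follows essentially the same route as the paper: apply Theorem~\ref{kerthm} to the pair $(f,-g)$, extract an invertible element $\chi$ of $\ker(\mathcal{S}_{f,-g})$, and rewrite $f*\chi=\chi*g$ as the conjugation relation defining $f\simeq g$. Your extra checks (that $(-g)_v^s=g_v^s$, that Assumption~\ref{assumpfv} holds, and the careful handling of which side the inverse sits on) are all sound and only make explicit what the paper leaves implicit.
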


\begin{proof}
Consider the operator $\mathcal{S}_{f,-g}$. Theorem~\ref{kerthm}
guarantees the existence of an invertible $h\in\ker(\mathcal{S}_{f,-g})$, that is $\mathcal{S}_{f,-g}(h)=f*h-h*g\equiv0$.
This equality can also be written as $h^{-*}*f*h=g$, i.e. $f\simeq g$.
\end{proof}

Under suitable hypotheses, it is possible to describe $\ker(\mathcal{S}_{f,g})$ in a simpler way.

\begin{corollary}\label{specialbasis}
Let $f,g\in\Sm(\Omega)$ be such that $f\simeq -g$ and $(f_{v}-g_{v})^{s}\not\equiv 0$. Then
$$
\ker(\mathcal{S}_{f,g})= \mbox{Span} _{\Sm_{\R}(\Omega)}(f_{v}-g_{v},f_{v}^{s}+g_{v}^{s}+2f_{v}*g_{v}).
$$
\end{corollary}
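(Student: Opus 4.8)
The plan is to read off the kernel from the description already established in Theorem~\ref{kerthm} and to reduce the whole statement to exhibiting two explicit linearly independent kernel elements, after which a dimension count closes the argument.

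First I would record what the hypothesis buys us. Applying Lemma~\ref{necessary} to $f\simeq -g$ gives $f_{0}\equiv -g_{0}$ and $f^{s}\equiv(-g)^{s}=g^{s}$, hence $f_{v}^{s}\equiv g_{v}^{s}$. Proposition~\ref{ranksimeq} then yields $\mbox{rk}(\mathcal{S}_{f,g})=2$, so $\ker(\mathcal{S}_{f,g})$ is a $2$-dimensional $\Sm_{\R}(\Omega)$-vector space. It therefore suffices to show that the two proposed generators $\tau_{1}:=f_{v}-g_{v}$ and $\tau_{2}:=f_{v}^{s}+g_{v}^{s}+2f_{v}*g_{v}$ both lie in $\ker(\mathcal{S}_{f,g})$ and are $\Sm_{\R}(\Omega)$-linearly independent; the dimension then forces equality of the two subspaces.

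For membership I would use the parametrization $\ker(\mathcal{S}_{f,g})=\{f_{v}*h-h*g_{v}\mid h\in\Sm(\Omega)\}$ from Theorem~\ref{kerthm} (valid since $f_{0}=-g_{0}$ and $f_{v}^{s}=g_{v}^{s}$). Choosing $h=1$ gives $f_{v}-g_{v}=\tau_{1}$, and choosing $h=g_{v}-f_{v}$ gives, after a one-line computation using $f_{v}*f_{v}=-f_{v}^{s}$ and $g_{v}*g_{v}=-g_{v}^{s}$,
\[
f_{v}*(g_{v}-f_{v})-(g_{v}-f_{v})*g_{v}=f_{v}^{s}+g_{v}^{s}+2f_{v}*g_{v}=\tau_{2}.
\]
(Equivalently one may verify directly that $\mathcal{S}_{f,g}(\tau_{2})=f_{v}*\tau_{2}+\tau_{2}*g_{v}=(f_{v}^{s}-g_{v}^{s})(f_{v}-g_{v})\equiv0$, where the vanishing uses $f_{v}^{s}\equiv g_{v}^{s}$.) Thus $\tau_{1},\tau_{2}\in\ker(\mathcal{S}_{f,g})$.

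The decisive step is independence, and this is where the hypothesis $(f_{v}-g_{v})^{s}\not\equiv0$ enters. I would split $\tau_{2}$ into ``real'' and ``vector'' parts: from formula~\eqref{formulaproduct} one has $(f_{v}*g_{v})_{0}=-\langle f_{v},g_{v}\rangle_{*}$, so the ``real part'' of $\tau_{2}$ equals $f_{v}^{s}+g_{v}^{s}-2\langle f_{v},g_{v}\rangle_{*}$, which by the identity $(a+b)^{s}=a^{s}+b^{s}+2\langle a,b\rangle_{*}$ (applied to $a=f_{v}$, $b=-g_{v}$, using $(-g_{v})^{s}=g_{v}^{s}$ and $\langle f_{v},-g_{v}\rangle_{*}=-\langle f_{v},g_{v}\rangle_{*}$) is exactly $(f_{v}-g_{v})^{s}$. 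Now if $\alpha\tau_{1}+\beta\tau_{2}\equiv0$ with $\alpha,\beta\in\Sm_{\R}(\Omega)$, taking ``real parts'' and using $(\tau_{1})_{0}\equiv0$ yields $\beta(f_{v}-g_{v})^{s}\equiv0$; since $\Sm_{\R}(\Omega)$ is a field and $(f_{v}-g_{v})^{s}\not\equiv0$, we get $\beta\equiv0$, and then $\alpha(f_{v}-g_{v})\equiv0$ with $f_{v}-g_{v}\not\equiv0$ forces $\alpha\equiv0$. Hence $\tau_{1},\tau_{2}$ are independent and, being two independent elements of the $2$-dimensional space $\ker(\mathcal{S}_{f,g})$, they span it. The only genuinely delicate point is the real–vector bookkeeping that identifies $(\tau_{2})_{0}$ with $(f_{v}-g_{v})^{s}$; everything else is forced by the rank computation and the dimension count.
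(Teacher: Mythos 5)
Your proof is correct and takes essentially the same route as the paper's: membership via the kernel parametrization of Theorem~\ref{kerthm} (you choose $h=1$ and $h=g_{v}-f_{v}$, while the paper uses $h=1$ and $h=-2f_{v}$, which give the same two generators since $f_{v}^{s}\equiv g_{v}^{s}$), then linear independence by observing that $f_{v}-g_{v}$ has vanishing ``real part'' whereas the ``real part'' of $f_{v}^{s}+g_{v}^{s}+2f_{v}*g_{v}$ equals $(f_{v}-g_{v})^{s}\not\equiv 0$, and finally the dimension-two count coming from Proposition~\ref{ranksimeq}.
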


\begin{proof}
As $f_{v}-g_{v}=f*1+1*g^{c}$ and $f_{v}^{s}+g_{v}^{s}+2f_{v}*g_{v}=2f_{v}^{s}+2f_{v}*g_{v}=f*(-2f_{v})+(-2f_{v})*g^{c}$, we have that 
$$\mbox{Span}_{\Sm_{\R}(\Omega)}(f_{v}-g_{v},f_{v}^{s}+g_{v}^{s}+2f_{v}*g_{v})\subseteq \ker(\mathcal{S}_{f,g}).$$
To show the equality it is sufficient to prove that $f_{v}-g_{v},f_{v}^{s}+f_{v}*g_{v}$ are linearly independent.
Since $f_{v}-g_{v}\not\equiv 0$ has zero ``real part'' and $f_{v}^{s}+g_{v}^{s}+2f_{v}*g_{v}=f_{v}^{s}+g_{v}^{s}-2\langle f_{v},g_{v}\rangle_{*}+2f_{v}\pv g_{v}$
has ``real part'' equal to $2(f_{v}^{s}-\langle f_{v},g_{v}\rangle_{*})=(f_{v}-g_{v})^{s}\not\equiv 0$, then we are done.
\end{proof}

The above result allows us to understand under which conditions on $f$ and $g$, the kernel of  $\mathcal{S}_{f,g}$ contains a zero divisor; obviously what follows is of interest only if $\Omega$ is a product domain.

\begin{proposition}
Let $f,g\in\Sm(\Omega)\setminus\Sm_{\R}(\Omega)$ be such that $f\simeq -g$. Then
$\ker(\mathcal{S}_{f,g})$ contains a zero divisor if and only if one of the following conditions holds
\begin{enumerate}
\item $f_{v}=g_{v}$ and $f_{v}^{s}$ has a square root;
\item $f_{v}\neq g_{v}$ and $(f_{v}-g_{v})^{s}\equiv 0$;
\item  $(f_{v}-g_{v})^{s}\not\equiv 0$ and $f_{v}^{s}$ has a square root.
\end{enumerate}
\end{proposition}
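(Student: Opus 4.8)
The plan is to recognise that, once $f\simeq -g$, Lemma~\ref{necessary} forces $f_{0}\equiv -g_{0}$ and $f_{v}^{s}\equiv g_{v}^{s}=:p$, so that by Proposition~\ref{ranksimeq} the operator $\mathcal{S}_{f,g}$ has rank $2$ and by Theorem~\ref{kerthm} its kernel is the $\Sm_{\R}(\Omega)$-plane $\{f_{v}*h-h*g_{v}\ :\ h\in\Sm(\Omega)\}$. Since a zero divisor is exactly a non-zero element annihilated by the symmetrization $\chi\mapsto\chi^{s}$, the whole statement becomes the question: \emph{is the binary quadratic form $\chi\mapsto\chi^{s}$ on the plane $\ker(\mathcal{S}_{f,g})$ isotropic?} Throughout I work on a product domain (the only case in which zero divisors exist), and I record the decisive structural fact that there $\mathcal{J}\in\Sm_{\R}(\Omega)$ with $\mathcal{J}^{2}=-1$; hence $-1$ is a square in the field $\Sm_{\R}(\Omega)$, so for any $p\in\Sm_{\R}(\Omega)$ one has that $p$ is a square if and only if $-p$ is. This last equivalence is exactly what will turn the discriminant conditions below into the stated hypothesis ``$f_{v}^{s}$ has a square root''.

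First I would dispose of the subcase $f_{v}\neq g_{v}$ with $(f_{v}-g_{v})^{s}\equiv0$: here $a:=f_{v}-g_{v}=f_{v}*1-1*g_{v}$ lies in $\ker(\mathcal{S}_{f,g})$, is non-zero and satisfies $a^{s}\equiv0$, so it is itself a zero divisor and no further condition is needed; this is precisely item~(2). When instead $(f_{v}-g_{v})^{s}\not\equiv0$ I would use the basis of Corollary~\ref{specialbasis}, namely $a=f_{v}-g_{v}$ and $b=f_{v}^{s}+g_{v}^{s}+2f_{v}*g_{v}$. Writing $q:=\langle f_{v},g_{v}\rangle_{*}$ and using $\langle f_{v},f_{v}\pv g_{v}\rangle_{*}\equiv0\equiv\langle g_{v},f_{v}\pv g_{v}\rangle_{*}$ together with the Lagrange-type identity $(f_{v}\pv g_{v})^{s}=f_{v}^{s}g_{v}^{s}-\langle f_{v},g_{v}\rangle_{*}^{2}$ of~\cite{A-dF}, a short computation gives that the form is diagonal in $(a,b)$:
\[
a^{s}=2(p-q),\qquad \langle a,b\rangle_{*}\equiv0,\qquad b^{s}=8p(p-q).
\]
Thus $(\alpha a+\beta b)^{s}=2(p-q)\,(\alpha^{2}+4p\,\beta^{2})$, and since $2(p-q)=(f_{v}-g_{v})^{s}\not\equiv0$ this vanishes non-trivially iff $\alpha^{2}+4p\beta^{2}\equiv0$, i.e. iff $-4p$ (equivalently $p=f_{v}^{s}$) is a square; when $p=\mu^{2}$ an explicit witness is $\chi=2\mathcal{J}\mu\,a+b$. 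This is item~(3).

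The remaining, and hardest, case is $f_{v}=g_{v}$ (item~(1)), where Corollary~\ref{specialbasis} does not apply. Here $\ker(\mathcal{S}_{f,g})=\{f_{v}*h-h*f_{v}\}=\{2f_{v}\pv h_{v}\}$ is exactly the plane of vectors perpendicular to $f_{v}$, and I must decide when it contains a non-zero isotropic vector. For sufficiency, if $p=\mu^{2}$ with $\mu\not\equiv0$ I would set $u=\mu^{-1}f_{v}$ (so $u^{s}=1$), choose a non-zero vector $n\perp u$, and check that $v=n+\mathcal{J}(u\pv n)$ is perpendicular to $f_{v}$, non-zero, and satisfies $v^{s}=n^{s}+\mathcal{J}^{2}(u\pv n)^{s}=n^{s}-n^{s}=0$ (the case $p\equiv0$ being trivial, since then $f_{v}$ itself is a zero divisor in the kernel). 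For necessity, given a non-zero isotropic $\chi\perp f_{v}$ with $p\not\equiv0$, I put $w=f_{v}*\chi=f_{v}\pv\chi$; the identity $\chi\pv(f_{v}\pv\chi)=f_{v}\langle\chi,\chi\rangle_{*}-\chi\langle\chi,f_{v}\rangle_{*}=0$ forces, via~\cite[Proposition 2.10]{A-dF}, $w=\gamma\chi$ for some $\gamma\in\Sm_{\R}(\Omega)$, whence $-p\,\chi=f_{v}^{*2}*\chi=\gamma^{2}\chi$ gives $p=-\gamma^{2}=(\mathcal{J}\gamma)^{2}$, a square.

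The main obstacle I expect is this last case $f_{v}=g_{v}$: the distinguished basis is unavailable, the form on $f_{v}^{\perp}$ is degenerate (scalar), and both directions require a self-contained argument on the perpendicular plane rather than discriminant bookkeeping. The recurring technical care is ensuring that the isotropic elements produced are genuinely non-zero in the degenerate subcases (when $p\equiv0$, or when the auxiliary vectors $n$ and $u\pv n$ turn out collinear). The computational backbone is the Lagrange identity and the perpendicularity and BAC--CAB rules for $\pv$ of~\cite{A-dF}, while the conceptual key---used in every case---is that on a product domain $-1=\mathcal{J}^{2}$ is a square in $\Sm_{\R}(\Omega)$, which is exactly what collapses each discriminant condition ``$-p$ is a square'' into the uniform hypothesis that $f_{v}^{s}$ admits a square root.
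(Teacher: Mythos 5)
Your proposal is correct, and for items (2) and (3) it is essentially the paper's own proof: both dispose of (2) by noting that $f_{v}-g_{v}$ is itself a zero divisor lying in the kernel, and both settle (3) by diagonalizing the quadratic form $\chi\mapsto\chi^{s}$ in the basis of Corollary~\ref{specialbasis} and reading off the discriminant --- your identity $(\alpha a+\beta b)^{s}=2(p-q)(\alpha^{2}+4p\beta^{2})$ is, up to normalization, the paper's $(\alpha(f_{v}-g_{v})+f_{v}^{s}+f_{v}*g_{v})^{s}=(f_{v}-g_{v})^{s}(\alpha^{2}+f_{v}^{s})$, and both conclude with the observation that $-1=\mathcal{J}^{2}$ is a square in $\Sm_{\R}(\Omega)$. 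Where you genuinely diverge is the case $f_{v}=g_{v}$. The paper works in coordinates: it fixes an orthonormal basis $(1,I,J,K)$ with $f_{1}\not\equiv0$, takes $f_{v}\pv J=-f_{3}I+f_{1}K$ and $f_{v}\pv K=f_{2}I-f_{1}J$ as a basis of the kernel, and does the discriminant bookkeeping by hand, producing the explicit witness $(f_{2}f_{3}+\mathcal{J}f_{1}\rho)f_{v}\pv J+(f_{1}^{2}+f_{3}^{2})f_{v}\pv K$ when $f_{v}^{s}=\rho^{2}$. You instead identify the kernel invariantly with the perpendicular plane of $f_{v}$ and argue coordinate-free; in particular your necessity argument (isotropic $\chi\perp f_{v}$ forces, via BAC--CAB and \cite[Proposition 2.10]{A-dF}, that $f_{v}*\chi=\gamma\chi$ with $\gamma\in\Sm_{\R}(\Omega)$, whence $-f_{v}^{s}\chi=f_{v}^{*2}*\chi=\gamma^{2}\chi$ and $f_{v}^{s}=(\mathcal{J}\gamma)^{2}$) is shorter and more conceptual than the paper's computation, at the price of invoking the linear-dependence criterion; the paper's version buys explicit formulas for the zero divisors.

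The one loose end is the non-vanishing of your sufficiency witness $v=n+\mathcal{J}(u\pv n)$, which you flag but do not close. The worry is real: if $n$ happens to satisfy $u\pv n=\mathcal{J}n$ (one of the two isotropic eigenlines of $x\mapsto u\pv x$ on $u^{\perp}$), then $v\equiv0$. The fix, however, is one line: if $v\equiv0$ then $n=-\mathcal{J}(u\pv n)$, so $n^{s}=\mathcal{J}^{2}(u\pv n)^{s}=-(u^{s}n^{s}-\langle u,n\rangle_{*}^{2})=-n^{s}$, hence $n^{s}\equiv0$ and $n$ itself is a nonzero zero divisor in the kernel; alternatively, choose $n$ with $n^{s}\not\equiv0$ (possible, since $u^{s}=1$ makes the form nondegenerate on $u^{\perp}$), which renders $v\neq0$ automatic. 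With that sentence added, your argument for case (1) is complete and correct.
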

\begin{proof}
If $f_{v}=g_{v}$ then $\ker(\mathcal{S}_{f,g})=\ker(\mathcal{S}_{f_{v},f_{v}})=\{f_{v}*h-h*f_{v}\,|\,h\in\Sm(\Omega)\}
=\{f_{v}\pv h_{v}\,|\,h\in\Sm(\Omega)\}$.
Since $f_{v}\not\equiv 0$, we can choose an orthonormal basis $(1,I,J,K)\subset\HH$ such that $f_{1}\not\equiv 0$.
Thus a basis of $\ker(\mathcal{S}_{f,g})$ is given by $f_{v}\pv J=-f_{3}I+f_{1}K$ and $f_{v}\pv K=f_{2}I-f_{1}J$.
Now suppose that $\ker(\mathcal{S}_{f,g})$ contains a zero divisor. If $f_{v}\pv J$ is a zero divisor, then $f_{1}^{2}+f_{3}^{2}\equiv 0$ and hence $f_{v}^{s}=f_{1}^{2}+f_{2}^{2}+f_{3}^{2}=f_{2}^{2}$ has a square root. If $f_{v}\pv J$ is not a zero divisor, then
there exists $\alpha\in\Sm_{\R}(\Omega)$ such that $\alpha(f_{v}\pv J)+f_{v}\pv K$ is a zero divisor which can also be written as
 
$$
0\equiv (\alpha(f_{v}\pv J)+f_{v}\pv K)^{s}=((f_{2}-\alpha f_{3})I-f_{1}J+\alpha f_{1}K)^{s}=\alpha^{2}(f_{1}^{2}+f_{3}^{2})-2\alpha f_{2}f_{3}+f_{2}^{2}+f_{1}^{2}.
$$
By multiplying last term by $f_{1}^{2}+f_{3}^{2}$ we
equivalently obtain $(\alpha(f_{1}^{2}+f_{3}^{2})-f_{2}f_{3})^{2}=-f_{1}^{2}(f_{1}^{2}+f_{2}^{2}+f_{3}^{2})=(\mathcal{J}f_{1})^{2}f_{v}^{s}$, showing that $f_{v}^{s}$ has a square root.
Vice versa, if $f_{1}^{2}+f_{3}^{2}\equiv 0$, then $f_{v}\pv J$ is a zero divisor which belongs to $\ker(\mathcal{S}_{f,g})$.
Otherwise, if $f_{1}^{2}+f_{3}^{2}\not\equiv 0$ and $f_{v}^{s}$ has a square root $\rho$, a long but straightforward computation
of its symmetrized function shows that 
$$
(f_{2}f_{3}+\mathcal{J}f_{1}\rho)f_{v}\pv J+(f_{1}^{2}+f_{3}^{2})f_{v}\pv K
$$
is a zero divisor which belongs to $\ker(\mathcal{S}_{f,g})$.

Now assume $f_{v}\neq g_{v}$. If $(f_{v}-g_{v})^{s}\equiv 0$, then $f_{v}-g_{v}$ is a zero divisor which belongs to $\ker(\mathcal{S}_{f,g})$.

Finally, if $(f_{v}-g_{v})^{s}\not\equiv 0$, Corollary~\ref{specialbasis} states that $f_{v}-g_{v}, f_{v}^{s}+f_{v}*g_{v}$ is a basis
of $\ker(\mathcal{S}_{f,g})$. Then, there exists $\alpha\in\Sm_{\R}(\Omega)$ such that $\alpha(f_{v}-g_{v})+f_{v}^{s}+f_{v}*g_{v}$ is a zero divisor if and only if $(\alpha(f_{v}-g_{v})+f_{v}^{s}+f_{v}*g_{v})^{s}\equiv 0$.
We first compute $\langle f_{v}-g_{v},  f_{v}^{s}+f_{v}*g_{v}\rangle_{*}$. Since $f_{v}-g_{v}$ has no ``real part'', we have
$$
\langle f_{v}-g_{v},  f_{v}^{s}+f_{v}*g_{v}\rangle_{*}=\langle f_{v}-g_{v},  f_{v}^{s}-\langle f_{v},g_{v}\rangle_{*} +f_{v}\pv g_{v}\rangle_{*}= \langle f_{v}-g_{v}, f_{v}\pv g_{v}\rangle_{*}\equiv 0.
$$
As a consequence we obtain
$$
(\alpha(f_{v}-g_{v})+f_{v}^{s}+f_{v}*g_{v})^{s}=\alpha^2(f_{v}-g_{v})^{s}+(f_{v}^{s}+f_{v}*g_{v})^{s}=\alpha^2(f_{v}-g_{v})^{s}+f_{v}^{s}(f_{v}-g_{v})^{s}=(f_{v}-g_{v})^{s}(\alpha^2+f_v^2).
$$
Since $(f_{v}-g_{v})^{s}\not\equiv 0$, there exists $\alpha\in\Sm_{\R}(\Omega)$ such that $\alpha(f_{v}-g_{v})+f_{v}^{s}+f_{v}*g_{v}$ is a zero divisor if and only if $\alpha^{2}+f_{v}^{s}\equiv 0$ and, using the function $\mathcal{J}$,
 last equality is equivalent to saying that $f_{v}^{s}$
has a square root.
\end{proof}
For a detailed study of the existence of a square root for slice preserving functions see~\cite[Section 3]{A-dF}.

We now describe the image of $\mathcal{S}_{f,g}$, giving  necessary and sufficient conditions 
on $\mathfrak{b}$ for the existence of a solution of the equation $\mathcal{S}_{f,g}(\chi)=\mathfrak{b}$ together with
an explicit description of a particular solution.

\begin{proposition}
Let $f,g\in\Sm(\Omega)$ with $f_{0}= -g_{0}$ and $f_{v}^{s}=g_{v}^{s}$. Then $\mathcal{S}_{f,g}(\chi)=\mathfrak{b}$ has a solution if and only if  
$$f^{c}*\mathfrak{b}+\mathfrak{b}*g\equiv 0.$$
\end{proposition}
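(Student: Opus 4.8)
The starting point is to recognize the candidate condition as a kernel membership: the identity $f^{c}*\mathfrak{b}+\mathfrak{b}*g\equiv 0$ says exactly that $\mathfrak{b}\in\ker(\mathcal{S}_{f^{c},g})$, where $\mathcal{S}_{f^{c},g}$ is the Sylvester operator attached to the pair $(f^{c},g)$. Since the solvability of $\mathcal{S}_{f,g}(\chi)=\mathfrak{b}$ is just the statement $\mathfrak{b}\in\mathrm{Im}(\mathcal{S}_{f,g})$, the proposition is equivalent to the equality of subspaces
$$\mathrm{Im}(\mathcal{S}_{f,g})=\ker(\mathcal{S}_{f^{c},g}).$$
The plan is to establish one inclusion by a direct computation and then force equality by a dimension count, so that both directions are handled at once.

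First I would prove $\mathrm{Im}(\mathcal{S}_{f,g})\subseteq\ker(\mathcal{S}_{f^{c},g})$ by showing that the composition $\mathcal{S}_{f^{c},g}\circ\mathcal{S}_{f,g}$ vanishes. Expanding
$$\mathcal{S}_{f^{c},g}(f*\chi+\chi*g)=f^{c}*f*\chi+(f^{c}+f)*\chi*g+\chi*g*g$$
and using $f^{c}*f=f^{s}$ together with $f^{c}+f=2f_{0}$, the centrality of $f^{s}$ and $f_{0}$ turns the right-hand side into $\chi*(f^{s}+2f_{0}g+g^{*2})$. Here the key tool is the universal quadratic identity $g^{*2}-2g_{0}g+g^{s}\equiv 0$, immediate from the scalar--vector form~\eqref{formulaproduct} and $g_{v}*g_{v}=-g_{v}^{s}$; it rewrites the bracket as $(f^{s}-g^{s})+2(f_{0}+g_{0})g$. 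The hypotheses now do their work: $f_{0}=-g_{0}$ kills the second summand, while $f_{v}^{s}=g_{v}^{s}$ gives $f^{s}=f_{0}^{2}+f_{v}^{s}=g_{0}^{2}+g_{v}^{s}=g^{s}$ and kills the first. Hence $\mathcal{S}_{f^{c},g}\circ\mathcal{S}_{f,g}\equiv 0$, i.e. every $\mathfrak{b}$ in the image satisfies $f^{c}*\mathfrak{b}+\mathfrak{b}*g\equiv 0$; this is the necessity.

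For the reverse inclusion I would compare dimensions. Since $f_{0}=-g_{0}$ and $f_{v}^{s}=g_{v}^{s}$, Proposition~\ref{ranksimeq} gives $\mathrm{rk}(\mathcal{S}_{f,g})=2$, so $\dim\mathrm{Im}(\mathcal{S}_{f,g})=2$. The decisive observation is that $\mathcal{S}_{f^{c},g}$ lies in the \emph{same} rank-$2$ regime: its first entry $f^{c}$ satisfies $(f^{c})_{0}=f_{0}=-g_{0}$ and $((f^{c})_{v})^{s}=(-f_{v})^{s}=f_{v}^{s}=g_{v}^{s}$, and $f^{c}\notin\Sm_{\R}(\Omega)$ because $f_{v}\not\equiv 0$ by Assumption~\ref{assumpfv}. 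Thus Proposition~\ref{ranksimeq} applies verbatim to $(f^{c},g)$ and yields $\mathrm{rk}(\mathcal{S}_{f^{c},g})=2$, whence $\dim\ker(\mathcal{S}_{f^{c},g})=4-2=2$. A subspace of dimension $2$ contained in a subspace of dimension $2$ must equal it, so $\mathrm{Im}(\mathcal{S}_{f,g})=\ker(\mathcal{S}_{f^{c},g})$ and the sufficiency follows.

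The only non-routine step is the middle one: noticing that the proposed condition is precisely membership in $\ker(\mathcal{S}_{f^{c},g})$ and that this conjugate operator inherits the rank-$2$ hypotheses, so that its kernel has exactly the dimension of the image we are after. Everything else reduces to the universal quadratic identity and rank--nullity. As a bonus, applying Theorem~\ref{kerthm} to the pair $(f^{c},g)$ supplies the explicit description $\mathrm{Im}(\mathcal{S}_{f,g})=\ker(\mathcal{S}_{f^{c},g})=\{f^{c}*h+h*g^{c}\,|\,h\in\Sm(\Omega)\}$, from which a particular preimage of a given admissible $\mathfrak{b}$ can be recovered.
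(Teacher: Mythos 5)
Your proof is correct, and its second half takes a genuinely different route from the paper's. The necessity direction is the same in both: you compute $\mathcal{S}_{f^{c},g}\circ\mathcal{S}_{f,g}\equiv 0$ via $f^{c}*f=f^{s}$, $f^{c}+f=2f_{0}$ and the quadratic identity $g^{*2}=2g_{0}g-g^{s}$, which is exactly the paper's chain of equalities, just phrased as an operator identity. For sufficiency, however, the paper proceeds in coordinates: it exhibits an ad hoc $4\times 4$ matrix $M$ satisfying $M\cdot S_{f,g}=0$, checks by inspection that $\mbox{rk}\,M=2$, concludes $\mathrm{Im}(S_{f,g})=\ker M$ by the dimension count, and then verifies—by splitting into ``real'' and ``vector'' parts—that the linear system $F_{\mathcal{B}}(\mathfrak{b})\in\ker M$ is a row-rearrangement of the condition $f^{c}*\mathfrak{b}+\mathfrak{b}*g\equiv 0$. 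Your observation that this condition \emph{is} membership in $\ker(\mathcal{S}_{f^{c},g})$, and that the pair $(f^{c},g)$ inherits the rank-$2$ hypotheses of Proposition~\ref{ranksimeq} (since $(f^{c})_{0}=f_{0}=-g_{0}$, $((f^{c})_{v})^{s}=f_{v}^{s}=g_{v}^{s}$, and $f^{c}\notin\Sm_{\R}(\Omega)$ precisely because $f_{v}\not\equiv 0$ under Assumption~\ref{assumpfv}—a point you rightly made explicit, as Proposition~\ref{ranksimeq} does require it), replaces the explicit matrix and the coordinate verification by rank--nullity over the field $\Sm_{\R}(\Omega)$. In hindsight your argument explains why the paper's $M$ works at all: it is $S_{f^{c},g}$ up to row operations. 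What the paper's computation buys is a self-contained, fully explicit system of equations on the components of $\mathfrak{b}$; what yours buys is brevity, a structural reason for the answer, and the bonus description $\mathrm{Im}(\mathcal{S}_{f,g})=\ker(\mathcal{S}_{f^{c},g})=\{f^{c}*h+h*g^{c}\,|\,h\in\Sm(\Omega)\}$ via Theorem~\ref{kerthm}, which the paper does not state.
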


\begin{proof}
If $\chi$ is a solution of $\mathcal{S}_{f,g}(\chi)=\mathfrak{b}$, then $\mathfrak{b}=f*\chi+\chi*g$. We now have
\begin{align*}
f^{c}*\mathfrak{b}+\mathfrak{b}*g&= f^{c}*(f*\chi+\chi*g)+(f*\chi+\chi*g)*g\\
&=f^{s}\chi+f^{c}*\chi*g+f*\chi*g+\chi*g^{*2}\\
&=f^{s}\chi+2f_{0}\chi*g+\chi*g^{*2}\\
&=\chi*(f_{0}^{2}+f_{v}^s+2f_{0}g_{0}+2f_{0}g_{v}+g_{0}^{2}-g_{v}^{s}+2g_{0}g_{v})\equiv0,
\end{align*}
since $f_{0}=-g_{0}$ and $f_{v}^{s}=g_{v}^{s}$.

Assume now that $f^{c}*\mathfrak{b}+\mathfrak{b}*g\equiv 0$. We prove that $\mathfrak{b}$ belongs to the image of $\mathcal{S}_{f,g}$
by giving a different description of this linear subspace via the matrix $S_{f,g}$.
Thanks to our hypotheses and to Proposition~\ref{ranksimeq}, we have that  $S_{f,g}$ is 
skew symmetric and has rank 2. We now look for a square matrix $M$ whose kernel coincides with the image 
of $S_{f,g}$, which means $\mbox{rk} M=2$ and $M\cdot S_{f,g}=0$.  Then $\mathfrak{b}$ belongs to the image of $S_{f,g}$ if and only if
it belongs to $\ker M$. Since $f_{v}^{s}=g_{v}^{s}$, a straightforward computation shows that 
$$M=\begin{vmatrix}
f_{3} -g_{3}& -(f_{2}+g_{2})&f_{1}+g_{1}&0\\
f_{1}-g_{1} &0&-(f_{3}+g_{3})&f_{2}+g_{2}\\
f_{2}-g_{2}&f_{3}+g_{3}&0&-(f_{1}+g_{1})\\
0&f_{1}-g_{1}&f_{2}-g_{2}&f_{3}-g_{3}
\end{vmatrix},
$$
satisfies $M\cdot S_{f,g}=0$. In particular the image of $S_{f,g}$ is contained in the kernel of $M$ which therefore has 
rank less or equal than 2. Since at least one between $f_{v}$ and $g_{v}$ is not identically zero, then, by direct inspection
we have that $\text{rk}M=2$ which ensures that the image of $S_{f,g}$ coincides with $\ker M$.

Then writing $\mathfrak{b}=\mathfrak{b}_{0}+\mathfrak{b}_{1}i+\mathfrak{b}_{2}j+\mathfrak{b}_{3}k$ we obtain that $\mathcal{S}_{f,g}(\chi)=\mathfrak{b}$ has a solution
if and only if $F_{\mathcal{B}}(\mathfrak{b})\in\ker M$, that is
\begin{equation}\label{sysM}
\left\{\begin{tabular}{rrrrl}
$(f_{3} -g_{3})\mathfrak{b}_{0}$ & $-(f_{2}+g_{2})\mathfrak{b}_{1}$&$+(f_{1}+g_{1})\mathfrak{b}_{2}$& &$=0$\\
$(f_{1}-g_{1})\mathfrak{b}_{0}$ & &$-(f_{3}+g_{3})\mathfrak{b}_{2}$&$+(f_{2}+g_{2})\mathfrak{b}_{3}$&$=0$\\
$(f_{2}-g_{2})\mathfrak{b}_{0} $&$+(f_{3}+g_{3})\mathfrak{b}_{1}$& &$-(f_{1}+g_{1})\mathfrak{b}_{3}$&$=0$\\
&$(f_{1}-g_{1})\mathfrak{b}_{1}$&$+(f_{2}-g_{2})\mathfrak{b}_{2}$&$+(f_{3}-g_{3})\mathfrak{b}_{3}$&$=0$
\end{tabular}
\right.
\end{equation}

We now claim that the above system is a translation in coordinates of the equality $f^{c}*\mathfrak{b}+\mathfrak{b}*g\equiv 0$.
First of all notice, since 
$f_{0}=-g_{0}$, the equality $f^{c}*\mathfrak{b}+\mathfrak{b}*g\equiv 0$ can also be written as $f_{v}*\mathfrak{b}-\mathfrak{b}*g_{v}\equiv 0$.
By writing $\mathfrak{b}=\mathfrak{b}_{0}+\mathfrak{b}_{v}$ and splitting the ``real'' and ``vector'' parts of $f_{v}*\mathfrak{b}-\mathfrak{b}*g_{v}\equiv 0$, we obtain the equivalent system
$$
\begin{cases}
\langle f_{v},\mathfrak{b}_{v}\rangle_{*}-\langle g_{v},\mathfrak{b}_{v}\rangle_{*}\equiv0\\
\mathfrak{b}_{0}f_{v}+f_{v}\pv\mathfrak{b}_{v}-\mathfrak{b}_{0}g_{v}-\mathfrak{b}_{v}\pv g_{v}\equiv0.
\end{cases}
$$
The properties of the scalar product $\langle.,.\rangle_{*}$ and of the $\pv$-product yield
$$
\begin{cases}
\langle f_{v}-g_{v},\mathfrak{b}_{v}\rangle_{*}\equiv0\\
\mathfrak{b}_{0}(f_{v}-g_{v})+(f_{v}+g_{v})\pv\mathfrak{b}_{v}\equiv0.
\end{cases}
$$
A direct check shows that, up to a rearrangements of lines, this last system coincides with system~\eqref{sysM} 
\end{proof}

Next proposition describes a family of particular solutions of the equation $\mathcal{S}_{f,g}(\chi)=\mathfrak{b}$.

\begin{proposition}\label{partsol}
Let $f,g\in\Sm(\Omega)$ with $f\simeq-g$. If $f^{c}*\mathfrak{b}+\mathfrak{b}*g\equiv 0$,
then for any $h=h_{v},k=k_{v}\in\Sm(\Omega)$, such that 
$\langle f_{v},h_{v}\rangle_{*}+\langle g_{v},k_{v}\rangle_{*}\not\equiv 0$,
we have that
$$\chi=-(2\langle f_{v},h_{v}\rangle_{*}+2\langle g_{v},k_{v}\rangle_{*})^{-1}(h*\mathfrak{b}+\mathfrak{b}*k)$$ 
is a solution of $\mathcal{S}_{f,g}(\chi)=\mathfrak{b}$.
\end{proposition}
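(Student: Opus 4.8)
The plan is to verify directly that the claimed $\chi$ solves the Sylvester equation, the whole computation being driven by two reductions coming from the hypothesis $f\simeq -g$. By Lemma~\ref{necessary} this forces $f_{0}\equiv -g_{0}$, whence, exactly as in the proof of Theorem~\ref{kerthm}, $\mathcal{S}_{f,g}=\mathcal{S}_{f_{v},g_{v}}$; that is, $\mathcal{S}_{f,g}(\chi)=f_{v}*\chi+\chi*g_{v}$ for every $\chi\in\Sm(\Omega)$. Using $f_{0}\equiv -g_{0}$ once more, the constraint $f^{c}*\mathfrak{b}+\mathfrak{b}*g\equiv 0$ rewrites as the intertwining identity $f_{v}*\mathfrak{b}\equiv\mathfrak{b}*g_{v}$, which is the engine of the argument.

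Next, writing $c:=-(2\langle f_{v},h_{v}\rangle_{*}+2\langle g_{v},k_{v}\rangle_{*})^{-1}\in\Sm_{\R}(\Omega)$ and using that $c$ is central, I would factor it out of $\chi=c\,(h*\mathfrak{b}+\mathfrak{b}*k)$ and reduce to computing $f_{v}*(h*\mathfrak{b}+\mathfrak{b}*k)+(h*\mathfrak{b}+\mathfrak{b}*k)*g_{v}$, recalling that $h=h_{v}$ and $k=k_{v}$. Expanding produces four terms; on the two mixed terms $f_{v}*\mathfrak{b}*k_{v}$ and $h_{v}*\mathfrak{b}*g_{v}$ I would apply $f_{v}*\mathfrak{b}=\mathfrak{b}*g_{v}$ to shift $\mathfrak{b}$ to the appropriate side, after which the expression collapses to $(f_{v}*h_{v}+h_{v}*f_{v})*\mathfrak{b}+\mathfrak{b}*(g_{v}*k_{v}+k_{v}*g_{v})$.

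Finally I would invoke the elementary identity $a_{v}*b_{v}+b_{v}*a_{v}=2(a_{v}*b_{v})_{0}=-2\langle a_{v},b_{v}\rangle_{*}$, which is immediate from \eqref{formulaproduct} together with the symmetry of $\langle\cdot,\cdot\rangle_{*}$ and the antisymmetry of $\pv$ (equivalently, from $(a_{v}*b_{v})^{c}=b_{v}*a_{v}$). This converts the two anticommutators into the central functions $-2\langle f_{v},h_{v}\rangle_{*}$ and $-2\langle g_{v},k_{v}\rangle_{*}$, so that the bracket above equals $-(2\langle f_{v},h_{v}\rangle_{*}+2\langle g_{v},k_{v}\rangle_{*})\,\mathfrak{b}$. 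Multiplying back by $c$ cancels this scalar and yields $\mathcal{S}_{f,g}(\chi)=\mathfrak{b}$, the hypothesis $\langle f_{v},h_{v}\rangle_{*}+\langle g_{v},k_{v}\rangle_{*}\not\equiv 0$ being precisely what makes $c$ well defined. There is no genuine obstacle here: the only point requiring care is to use the intertwining relation to pair the two mixed terms correctly before applying the anticommutator identity, and to keep track of the fact that the $\langle\cdot,\cdot\rangle_{*}$-terms are central and hence freely movable.
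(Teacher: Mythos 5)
Your proof is correct and follows essentially the same route as the paper: both reduce via $f_0\equiv-g_0$ and the rewriting $f^{c}*\mathfrak{b}+\mathfrak{b}*g\equiv 0 \Leftrightarrow f_{v}*\mathfrak{b}=\mathfrak{b}*g_{v}$, expand $\mathcal{S}_{f,g}(h*\mathfrak{b}+\mathfrak{b}*k)$ into four terms, use the intertwining relation on the two mixed terms, and conclude with the anticommutator identity $f_{v}*h_{v}+h_{v}*f_{v}=-2\langle f_{v},h_{v}\rangle_{*}$ (and likewise for $g_{v},k_{v}$). No gaps; the centrality of the scalar factor and the well-definedness remark are handled exactly as in the paper.
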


\begin{proof}
Being $\langle f_{v},h_{v}\rangle_{*}+\langle g_{v},k_{v}\rangle_{*}\in\Sm_{\R}(\Omega)\setminus\{0\}$, then
$-(2\langle f_{v},h_{v}\rangle_{*}+2\langle g_{v},k_{v}\rangle_{*})^{-1}(h*\mathfrak{b}+\mathfrak{b}*k)$ is well defined.
As $f_{0}=-g_{0}$ and $f_{v}*\mathfrak{b}=\mathfrak{b}*g_{v}$, the thesis is an immediate consequence of the following chain of
equalities
\begin{align*}
\mathcal{S}_{f,g}(h*\mathfrak{b}+\mathfrak{b}*k)&=f*(h*\mathfrak{b}+\mathfrak{b}*k)+(h*\mathfrak{b}+\mathfrak{b}*k)*g\\
&=f_{0}(h*\mathfrak{b}+\mathfrak{b}*k)+g_{0}(h*\mathfrak{b}+\mathfrak{b}*k)+f_{v}*(h*\mathfrak{b}+\mathfrak{b}*k)+(h*\mathfrak{b}+\mathfrak{b}*k)*g_{v}\\
&=f_{v}*h*\mathfrak{b}+f_{v}*\mathfrak{b}*k+h*\mathfrak{b}*g_{v}+\mathfrak{b}*k*g_{v}\\
&=f_{v}*h*\mathfrak{b}+\mathfrak{b}*g_{v}*k+h*f_{v}*\mathfrak{b}+\mathfrak{b}*k*g_{v}\\
&=(f_{v}*h_{v}+h_{v}*f_{v})*\mathfrak{b}+\mathfrak{b}*(g_{v}*k_{v}+k_{v}*g_{v})\\
&=-2\langle f_{v},h_{v}\rangle_{*}*\mathfrak{b}-\mathfrak{b}*2\langle g_{v},k_{v}\rangle_{*}=-2(\langle f_{v},h_{v}\rangle_{*}+\langle g_{v},k_{v}\rangle_{*})\mathfrak{b}.
\end{align*}
\end{proof}

\begin{remark}
Notice that there always exist $h,k\in\Sm(\Omega)$, with $h_{0}=k_{0}=0$,
such that the condition $\langle f_{v},h_{v}\rangle_{*}+\langle g_{v},k_{v}\rangle_{*}\not\equiv 0$ is satisfied.
Indeed, since $f_{v}\not\equiv0$, it is enough to take $k_{v}\equiv 0$ and $h=h_{v}\equiv\delta\in\SF$ such that
$(f_{v}\delta)_{0}=-\langle f_{v},\delta\rangle_{*} \not\equiv 0$. 
\end{remark}

The following corollary describes two special cases.

\begin{corollary}
Let $f,g\in\Sm(\Omega)$ be such that $f\simeq-g$ and assume $f^{c}*\mathfrak{b}+\mathfrak{b}*g\equiv 0$.
\begin{enumerate}
\item If $f_{v}$ is not a zero divisor, then $\chi=-(2f_{v}^{s})^{-1}(f_{v}*\mathfrak{b})$ is a solution of $\mathcal{S}_{f,g}(\chi)=\mathfrak{b}$.
\item For any $\delta\in\SF$ such that $(f\delta)_{0}\not\equiv 0$, then $\chi=-(2f\delta)_{0}^{-1}(\delta*\mathfrak{b})$ is a solution of $\mathcal{S}_{f,g}(\chi)=\mathfrak{b}$.
\end{enumerate}

\end{corollary}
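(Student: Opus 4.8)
The plan is to read both assertions as special instances of Proposition~\ref{partsol}, taking $k\equiv 0$ and a convenient purely vectorial $h=h_{v}$ in each case; with this choice the only remaining task is to evaluate the scalar factor $\langle f_{v},h_{v}\rangle_{*}$ that governs the coefficient appearing there.

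For item \emph{(1)} I would put $h=f_{v}$ and $k\equiv 0$. The admissibility condition of Proposition~\ref{partsol} then reduces to $\langle f_{v},f_{v}\rangle_{*}\not\equiv 0$, and I would evaluate this quantity using the definition $\langle f_{v},f_{v}\rangle_{*}=(f_{v}*f_{v}^{c})_{0}$ together with the identities $f_{v}^{c}=-f_{v}$ and $f_{v}*f_{v}=-f_{v}^{s}$, obtaining $\langle f_{v},f_{v}\rangle_{*}=f_{v}^{s}$. Since $f_{v}^{s}\not\equiv 0$ is precisely the statement that $f_{v}$ is not a zero divisor, the hypothesis of the corollary makes Proposition~\ref{partsol} applicable, and substituting $h=f_{v}$, $k\equiv 0$ into its conclusion yields at once $\chi=-(2f_{v}^{s})^{-1}(f_{v}*\mathfrak{b})$.

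For item \emph{(2)} I would instead take $h=\delta\in\SF$ (which is automatically vectorial) and again $k\equiv 0$. Now the relevant coefficient is $\langle f_{v},\delta\rangle_{*}=(f_{v}*\delta^{c})_{0}$; using $\delta^{c}=-\delta$ and the fact that the ``real part'' of $f\delta$ coincides with that of $f_{v}\delta$ (the summand $f_{0}\delta$ being purely vectorial since $\delta\in\SF$), I would arrive at $\langle f_{v},\delta\rangle_{*}=-(f\delta)_{0}$, the identity already recorded in the remark preceding the statement. The standing assumption $(f\delta)_{0}\not\equiv 0$ then guarantees that Proposition~\ref{partsol} applies, and inserting $h=\delta$, $k\equiv 0$ into its conclusion gives the asserted particular solution expressed through $\delta*\mathfrak{b}$.

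There is no genuine obstacle here, as both items are immediate corollaries of Proposition~\ref{partsol}: all the content reduces to computing the two $*$-scalar products $\langle f_{v},f_{v}\rangle_{*}$ and $\langle f_{v},\delta\rangle_{*}$. The single point that must be handled with care is the sign bookkeeping, since $\langle\cdot,\cdot\rangle_{*}$ is defined through the regular conjugate; correctly tracking the signs produced by $f_{v}^{c}=-f_{v}$ and $\delta^{c}=-\delta$ is exactly what fixes the scalar constant multiplying $f_{v}*\mathfrak{b}$ and $\delta*\mathfrak{b}$.
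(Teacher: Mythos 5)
Your proposal is exactly the paper's own proof: the paper disposes of both items in one line by taking $h=f_{v}$, $k\equiv 0$ for \textit{(1)} and $h\equiv\delta$, $k\equiv 0$ for \textit{(2)} in Proposition~\ref{partsol}, which is precisely your plan; your evaluation $\langle f_{v},f_{v}\rangle_{*}=f_{v}^{s}$ correctly identifies the admissibility condition with the hypothesis that $f_{v}$ is not a zero divisor, so item \textit{(1)} is complete and correct.

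In item \textit{(2)}, however, your final assertion does not match your own computation. You correctly record $\langle f_{v},\delta\rangle_{*}=-(f\delta)_{0}$ (this is the identity in the remark following Proposition~\ref{partsol}, consistent with Remark~\ref{rkdelta}); but substituting it into the formula of Proposition~\ref{partsol} gives $\chi=-(2\langle f_{v},\delta\rangle_{*})^{-1}(\delta*\mathfrak{b})=+\bigl(2(f\delta)_{0}\bigr)^{-1}(\delta*\mathfrak{b})$, i.e.\ the \emph{opposite} sign to the one displayed in the corollary. A direct check confirms the plus sign: since $f_{0}=-g_{0}$ and $f_{v}*\mathfrak{b}=\mathfrak{b}*g_{v}$, one has $\mathcal{S}_{f,g}(\delta*\mathfrak{b})=(f_{v}*\delta+\delta*f_{v})*\mathfrak{b}=-2\langle f_{v},\delta\rangle_{*}\mathfrak{b}=2(f\delta)_{0}\mathfrak{b}$; a concrete example ($f=\mathcal{J}i$, $g=-\mathcal{J}i$, $\mathfrak{b}=j$, $\delta=i$) shows the displayed formula actually solves $\mathcal{S}_{f,g}(\chi)=-\mathfrak{b}$. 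So the sign slip sits in the paper's statement of item \textit{(2)} (it would require $\langle f_{v},\delta\rangle_{*}=+(f\delta)_{0}$), not in your identity; but your claim that the substitution ``gives the asserted particular solution'' silently papers over this discrepancy, and after emphasizing that sign bookkeeping is the one delicate point, you should either have produced the corrected formula $\chi=\bigl(2(f\delta)_{0}\bigr)^{-1}(\delta*\mathfrak{b})$ or explicitly flagged the inconsistency.
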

\begin{proof}
In case \textit{(1)} take $h=f_{v}$ and $k\equiv 0$ in the statement of Proposition~\ref{partsol}; in case \textit{(2)} take
$h\equiv\delta$ and $k\equiv0$.\end{proof}

\section{Applications of the rank 2 case to function theory}\label{outcome}

The following result, which allows us to classify all idempotents up to $*$-conjugation, is a first application of the characterization of the equivalence relation $\simeq$ in terms of 
``real" and ``vector" parts of the functions, namely Corollary~\ref{sufficiencyproduct}.
\begin{proposition}\label{idemconjug}
Let $f\in\Sm(\Omega)\setminus\Sm_\R(\Omega)$; then $f$ is equivalent to a one-slice preserving function $g\in\Sm(\Omega)\setminus\Sm_\R(\Omega)$ if and only if $f_v^s\not\equiv 0$ has a square root. Moreover, all idempotents in $\So(\Omega)$ are equivalent. 
\end{proposition}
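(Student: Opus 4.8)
The plan is to reduce everything to the invariant pair $(f_0,f_v^s)$, exploiting that for functions outside $\Sm_\R(\Omega)$ the equivalence $\simeq$ is completely governed by these data: Lemma~\ref{necessary} gives $f\simeq g\Rightarrow f_0\equiv g_0$ and $f_v^s\equiv g_v^s$, while Corollary~\ref{sufficiencyproduct} supplies the converse. Thus $f$ is equivalent to \emph{some} one-slice preserving function precisely when there exists a one-slice preserving $g\in\Sm(\Omega)\setminus\Sm_\R(\Omega)$ with $g_0\equiv f_0$ and $g_v^s\equiv f_v^s$. The whole statement therefore rests on understanding which slice-preserving functions can occur as $g_v^s$ when $g$ is one-slice preserving.

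The key structural point, which I expect to be the main obstacle, is the following description of $\Sm_I(\Omega)$. Complete $1,I$ to an orthonormal basis $(1,I,J,K)$ adapted to the preserved unit $I$ and write $g=g_0+g_1 I+g_2 J+g_3 K$ with $g_\ell\in\Sm_\R(\Omega)$ as in Proposition~\ref{vectorspace}. I would show that $g$ is $\C_I$-preserving if and only if $g_2\equiv g_3\equiv 0$, i.e.\ $g=g_0+g_1 I$. One implication is immediate, since $g_0(x+Iy),g_1(x+Iy)\in\C_I$ and $\C_I$ is a field. For the converse, evaluating $g$ on $\Omega_I$ and writing $g_\ell|_{\C_I}=a_\ell+Ib_\ell$ with $a_\ell$ even and $b_\ell$ odd in $y$ (as forced by membership in $\Sm_\R(\Omega)$), the $J$- and $K$-components of $g$ on $\Omega_I$ turn out to be $a_2-b_3$ and $a_3+b_2$; their vanishing reads $a_2\equiv b_3$ and $a_3\equiv -b_2$, and the parity constraints (an even function equal to an odd one) force $a_2\equiv b_2\equiv a_3\equiv b_3\equiv 0$, whence $g_2\equiv g_3\equiv 0$ by the Identity Principle. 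Since then $g_v=g_1 I$, one gets $g_v^s=g_1^2$; moreover $g\notin\Sm_\R(\Omega)$ exactly when $g_1\not\equiv 0$, that is when $g_v^s=g_1^2\not\equiv 0$. In particular $g_v^s$ is always a nonzero square for $g\in\Sm_I(\Omega)\setminus\Sm_\R(\Omega)$.

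With this in hand both implications are short. If $f_v^s\not\equiv 0$ admits a square root $\mu\in\Sm_\R(\Omega)$, fix any $I\in\SF$ and set $g:=f_0+\mu I$; by the easy implication above $g\in\Sm_I(\Omega)$, $g\notin\Sm_\R(\Omega)$ (as $\mu\not\equiv 0$), and $g_0\equiv f_0$, $g_v^s=\mu^2\equiv f_v^s$, so Corollary~\ref{sufficiencyproduct} yields $f\simeq g$. Conversely, if $f\simeq g$ with $g\in\Sm_I(\Omega)\setminus\Sm_\R(\Omega)$, then Lemma~\ref{necessary} gives $f_v^s\equiv g_v^s$, which by the structural lemma equals $g_1^2\not\equiv 0$; hence $f_v^s$ is a nonzero square.

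Finally, for the idempotents: by Proposition~\ref{characterizationidempotent} every idempotent $\sigma\in\So(\Omega)$ lies outside $\Sm_\R(\Omega)$ and satisfies $\sigma_0\equiv\tfrac12$ and $\sigma_v^s\equiv-\tfrac14$. Thus any two idempotents share the same invariants $(\tfrac12,-\tfrac14)$, and Corollary~\ref{sufficiencyproduct} immediately gives that they are equivalent. This is also consistent with the first part, since $-\tfrac14=(\mathcal{J}/2)^2$ with $\mathcal{J}\in\Sm_\R(\Omega)$, so each idempotent is in fact equivalent to the one-slice preserving idempotent $\tfrac12-\tfrac12\mathcal{J}I$. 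The only genuinely delicate step in the whole argument is the parity computation identifying $\Sm_I(\Omega)$ with $\Sm_\R(\Omega)\oplus\Sm_\R(\Omega)I$; everything else is a direct application of the already established dictionary between $\simeq$ and the pair $(f_0,f_v^s)$.
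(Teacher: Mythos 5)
Your proof is correct and follows essentially the same route as the paper: reduce $\simeq$ to the invariant pair $(f_0,f_v^s)$ via Lemma~\ref{necessary} and Corollary~\ref{sufficiencyproduct}, observe that a one-slice preserving $g\notin\Sm_\R(\Omega)$ has $g_v=\gamma I$ with $\gamma\in\Sm_\R(\Omega)\setminus\{0\}$ (so $g_v^s=\gamma^2$), and match the idempotent invariants $\left(\frac12,-\frac14\right)$ coming from Proposition~\ref{characterizationidempotent}. The only difference is that you supply a full parity-based proof of the structural fact $\Sm_I(\Omega)=\Sm_\R(\Omega)\oplus\Sm_\R(\Omega)I$, which the paper merely asserts as known; your argument for it is correct.
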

\begin{proof}
By Corollary~\ref{sufficiencyproduct}, the function $f$ is equivalent to $g$ if and only iff $f_0=g_0$ and $f_v^s=g_v^s$. Then it is enough to notice that for a one-slice preserving function $g\notin\Sm_\R(\Omega)$ we have $g_v=\gamma I$ for a suitable $I\in\SF$ and $\gamma\in\Sm_\R(\Omega)\setminus\{0\}$.

As for the second part of the statement, given an idempotent $\sigma$ and any $I\in\SF$, we have $\sigma_0=\ell^{+,I}_0=\frac12$ and $\sigma_v^s=(\ell^{+,I}_v)^s=-\frac14$, so that $\sigma\simeq \ell^{+,I}$.
\end{proof}

The previous proposition gives us the possibility to give a necessary and sufficient condition in order that the product of an idempotent with a function is identically zero. It is worth comparing this result with the statement of Proposition~\ref{kernelfg} in which the kernel
of $\mathcal{L}_{f,g}$ is characterized via a condition, while next theorem gives an extensional description. 

\begin{theorem}\label{ker-dimension}
Given an idempotent $\sigma\in\So(\Omega)$ and  $\rho\in\Sm(\Omega)$, then 
\begin{enumerate}
\item $\sigma*\rho\equiv 0$ if and only if there exist $I,J\in\SF$ with $I\perp J$, $\alpha,\beta\in\Sm_\R(\Omega)$ and $f\in\Sm(\Omega)$ invertible such that $\sigma=f*\ell^{+,I}*f^{-*}$ and $\rho=f*\ell^{-,I}*(\alpha+\beta J)*f^{-*}$. In particular, $\rho$ is an idempotent if and only if $\alpha=1$.
\item $\sigma*\rho*\sigma^c\equiv 0$ if and only if there exist $I,J\in\SF$ with $I\perp J$, $\alpha_0,\alpha_1,\beta\in\Sm_\R(\Omega)$ and $f\in\Sm(\Omega)$ invertible such that $\sigma=f*\ell^{+,I}*f^{-*}$ and $\rho=f*(\alpha_0+\alpha_1I+\beta \ell^{-,I}*J)*f^{-*}$. In particular, $\rho$ is an idempotent if and only if $\alpha_0=\frac12$ and $\alpha_1^{2}=-\frac14$.
\item $\sigma*\rho*\sigma\equiv 0$ if and only if there exist $I,J\in\SF$ with $I\perp J$, $\alpha_0,\beta_2,\beta_3\in\Sm_\R(\Omega)$ and $f\in\Sm(\Omega)$ invertible such that $\sigma=f*\ell^{+,I}*f^{-*}$ and $\rho=f*(\alpha\ell^{-,I}+(\beta_2+\beta_3i)*J)*f^{-*}$. In particular, $\rho$ is an idempotent if and only if $\alpha=1$ and $\beta_{2}^{2}+\beta_{3}^{2}\equiv0$.
\end{enumerate}
\end{theorem}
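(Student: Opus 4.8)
The plan is to reduce all three statements to a single model computation with the ``basic'' idempotent $\ell^{+,I}$, and then to transport the outcome by an inner automorphism. By Proposition~\ref{idemconjug} every idempotent $\sigma\in\So(\Omega)$ is equivalent to $\ell^{+,I}$, so there is an invertible $f\in\Sm(\Omega)$ with $\sigma=f*\ell^{+,I}*f^{-*}$; writing an arbitrary $\rho$ as $\rho=f*\tilde\rho*f^{-*}$ (legitimate since $\phi\mapsto f*\phi*f^{-*}$ is an $\Sm_\R(\Omega)$-algebra automorphism, hence a bijection preserving idempotents) turns the three hypotheses into $\ell^{+,I}*\tilde\rho\equiv0$, $\ell^{+,I}*\tilde\rho*\ell^{-,I}\equiv0$ and $\ell^{+,I}*\tilde\rho*\ell^{+,I}\equiv0$. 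The key point enabling this reduction for the $\sigma^c$ in case (2) is the identity $\sigma^c=1-\sigma$, which holds because $\sigma_0\equiv\frac12$ by Proposition~\ref{characterizationidempotent}: it gives $\sigma^c=f*(1-\ell^{+,I})*f^{-*}=f*\ell^{-,I}*f^{-*}$, so that regular conjugation is compatible with the inner automorphism on $\sigma$ (something which fails for a general element, since $(\cdot)^c$ is an anti-homomorphism).

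The heart of the argument is to make the Peirce decomposition induced by the orthogonal idempotents $\ell^{+,I},\ell^{-,I}$ fully explicit. Fixing $J\in\SF$ with $I\perp J$ and setting $K=IJ$, I would first check that $\{\ell^{+,I},\ell^{-,I},\ell^{+,I}*J,\ell^{-,I}*J\}$ is a basis of $\Sm(\Omega)$ over $\Sm_\R(\Omega)$; it is obtained from $(1,I,J,K)$ by an invertible change of coordinates involving the central unit $\mathcal{J}$. Then I compute its multiplication table, whose crucial relations are $\ell^{+,I}*\ell^{-,I}\equiv0$, $J*\ell^{+,I}=\ell^{-,I}*J$, $J*\ell^{-,I}=\ell^{+,I}*J$ and $J*J\equiv-1$. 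These identities show that sending $\ell^{+,I},\ell^{-,I},\ell^{+,I}*J,\ell^{-,I}*J$ to $E_{11},E_{22},E_{12},-E_{21}$ is an algebra isomorphism $(\Sm(\Omega),*)\cong M_2(\Sm_\R(\Omega))$ over the field $\Sm_\R(\Omega)$, under which $\ell^{+,I}$ corresponds to $E_{11}$ and $\ell^{-,I}$ to $E_{22}$.

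Under this isomorphism the three reduced conditions become the elementary matrix equations $E_{11}T=0$, $E_{11}TE_{22}=0$ and $E_{11}TE_{11}=0$ for $T$ the matrix of $\tilde\rho$, whose solution spaces are respectively the matrices with vanishing first row, vanishing $(1,2)$-entry, and vanishing $(1,1)$-entry. Translating each solution space back through the basis above (using $E_{11}+E_{22}=1$ and $I\leftrightarrow\mathcal{J}(E_{11}-E_{22})$) yields exactly the stated normal forms for $\tilde\rho$, hence for $\rho=f*\tilde\rho*f^{-*}$; the free scalar parameters $\alpha,\beta$ in case (1), $\alpha_0,\alpha_1,\beta$ in case (2), and $\alpha,\beta_2,\beta_3$ in case (3) are precisely the surviving entries of $T$ rewritten in the mixed basis. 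The converse inclusions are then a direct verification that each displayed $\rho$ solves the corresponding equation. Finally the idempotency refinements follow by imposing $T^{2}=T$ on each solved form: squaring a $2\times2$ matrix over the field $\Sm_\R(\Omega)$ forces the surviving diagonal entries into $\{0,1\}$, and discarding $T=0,1$ leaves $\alpha=1$ in case (1); the two diagonal entries equal to $\{0,1\}$, i.e.\ $\alpha_0=\frac12$ and $\alpha_1^{2}\equiv-\frac14$, in case (2); and $\alpha=1$ together with the vanishing of the off-diagonal product, which one computes to equal $-(\beta_2^{2}+\beta_3^{2})$, in case (3).

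I expect the main obstacle to be computational bookkeeping rather than conceptual difficulty: verifying the multiplication table (in particular the signs and the $\mathcal{J}$-bookkeeping in $J*\ell^{\pm,I}$ and in $(\ell^{+,I}*J)*(\ell^{-,I}*J)\equiv-\ell^{+,I}$), and then carrying the change of basis carefully enough that the solution spaces match the deliberately asymmetric forms in the statement --- above all the identity $\beta_2^{2}+\beta_3^{2}\equiv-r_{12}r_{21}$ that makes the case (3) idempotency condition collapse to a single clean equation. This theorem should then read as the extensional counterpart to the conditions describing $\ker(\mathcal{L}_{f,g})$ in Proposition~\ref{kernelfg} when both $f$ and $g$ are idempotents.
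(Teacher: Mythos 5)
Your proposal is correct, and while it shares the paper's opening move, its core computation goes by a genuinely different route. The paper also starts from Proposition~\ref{idemconjug} and the inner automorphism $\phi\mapsto f^{-*}*\phi*f$ to reduce to $\sigma=\ell^{+,I}$ (and, like you, it silently relies on $\sigma^{c}=1-\sigma$ to turn $\sigma^{c}$ into $\ell^{-,I}$ in case \textit{(2)} --- you make this point explicit, which is a genuine improvement, since conjugation is only an anti-homomorphism). But from there the paper proceeds by brute force: it writes $\rho=\rho_{0}+\rho_{1}I+\rho_{2}J+\rho_{3}K$, expands $\ell^{+,I}*\rho$, $\ell^{+,I}*\rho*\ell^{-,I}$ and $\ell^{+,I}*\rho*\ell^{+,I}$ with the scalar--vector product formula~\eqref{formulaproduct}, and solves the resulting linear systems over $\Sm_{\R}(\Omega)$, reducing cases \textit{(2)} and \textit{(3)} to computations of the same kind as \textit{(1)}. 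You instead exhibit the splitting $(\Sm(\Omega),*)\cong M_{2}(\Sm_{\R}(\Omega))$ via the Peirce basis $\{\ell^{+,I},\ell^{-,I},\ell^{+,I}*J,\ell^{-,I}*J\}$ --- the classical splitting of a quaternion algebra over a field in which $-1$ is a square, available here precisely because $\mathcal{J}\in\Sm_{\R}(\Omega)$ on a product domain (and the presence of an idempotent forces $\Omega$ to be one, by Proposition~\ref{characterizationidempotent}). I checked your key relations ($J*\ell^{\pm,I}=\ell^{\mp,I}*J$, $(\ell^{+,I}*J)*(\ell^{-,I}*J)=-\ell^{+,I}$, hence the assignment $E_{11},E_{22},E_{12},-E_{21}$ is multiplicative), the translations of the three solution spaces back into the stated normal forms, and the identities $\alpha_{0}=\tfrac{t_{11}+t_{22}}{2}$, $\alpha_{1}=\tfrac{(t_{22}-t_{11})\mathcal{J}}{2}$ and $\beta_{2}^{2}+\beta_{3}^{2}=-t_{12}t_{21}$ that drive the idempotency refinements: all are correct. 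What your approach buys is uniformity (all three cases become $E_{11}T=0$, $E_{11}TE_{22}=0$, $E_{11}TE_{11}=0$), automatic converses (the solution space of a matrix equation is exactly described, so no separate sufficiency check is really needed), and transparent idempotency conditions (idempotent matrices over a field); what it costs is the careful one-time verification of the multiplication table and change of basis, exactly the bookkeeping you flagged. The paper's method stays entirely inside its own $\langle\cdot,\cdot\rangle_{*}$, $\pv$ formalism and needs no structural input beyond it.
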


\begin{proof}
\textit{(1)}. A direct computation shows that, if $\sigma=f*\ell^{+,I}*f^{-*}$ and $\rho=f*\ell^{-,I}*(\alpha+\beta J)*f^{-*}$, then 
$\sigma*\rho=f*\ell^{+,I}*\ell^{-,J}*(\alpha+\beta J)*f^{-*}\equiv 0$ because $\ell^{+,I}*\ell^{-,I}\equiv 0$. 

Vice versa, Proposition~\ref{idemconjug} entails that if $\sigma$ is a idempotent, there exist $f\in\Sm(\Omega)$ invertible such that $\sigma=f*\ell^{+,I}*f^{-*}$. As $\sigma*\rho\equiv0$ iff $f^{-*}*\sigma*\rho*f\equiv0$, we can reduce ourselves to the case $f=1$, that is $\sigma=\ell^{+,I}$.
Now set $\rho=\rho_0+\rho_1I+\rho_2J+\rho_3K$ and compute 
\begin{align*}
\ell^{+,I}*\rho&=
\frac12\left(1-\mathcal{J}i\right)*\left(\rho_0+\rho_v\right)
=\frac12\left[\rho_0+\langle\mathcal{J}I,\rho_v\rangle_*+\rho_v-\mathcal{J}\rho_0I-\mathcal{J}I\pv \rho_v\right]\\
&=\frac12\left[\rho_0+\mathcal{J}\rho_1+(\rho_1I+\rho_2J+\rho_3K)-\mathcal{J}\rho_0I-\mathcal{J}(-\rho_3J+\rho_2K)\right]\\
&=\frac12\left[\rho_0+\mathcal{J}\rho_1+(\rho_1-\mathcal{J}\rho_0)I+(\rho_2+\mathcal{J}\rho_3)J+(\rho_3-\mathcal{J}\rho_2)K\right].
\end{align*}
Hence we obtain that $\ell^{+,I}*\rho\equiv0$ if and only if 
$$\begin{cases}
\rho_0+\mathcal{J}\rho_1\equiv 0,\\
\rho_1-\mathcal{J}\rho_0\equiv 0,\\
\rho_2+\mathcal{J}\rho_3\equiv 0\\
\rho_3-\mathcal{J}\rho_2\equiv 0.
\end{cases}
$$
This system is equivalent to $\rho_1=\mathcal{J}\rho_0$ and $\rho_3=\mathcal{J}\rho_2$ and these last two equalities give 
$$
\rho=\rho_0+\mathcal{J}\rho_0I+\rho_2J+\rho_2\mathcal{J}K=\rho_0(1+\mathcal{J}I)+\rho_2(1+\mathcal{J}I)J;
$$ 
by setting $\alpha=2\rho_0$ and $\beta=2\rho_2$ we get $\rho=\ell^{-,I}*(\alpha+\beta J)$.
Finally, $\rho=f*\ell^{-,I}*(\alpha+\beta J)*f^{-*}$ is an idempotent if and only if $\ell^{-,I}*(\alpha+\beta J)$ is, and a straightforward computation shows that this holds if and only if $\alpha=1$.

\textit{(2)}. Again a direct computation shows that the condition is sufficient.

Vice versa, as above we can suppose that $\sigma=\ell^{+,I}$; writing $\rho=\rho_0+\rho_1I+\rho_2J+\rho_3K$ we obtain, since $\ell^{+,I}$ is an idempotent and $\ell^{+,I}*\ell^{-,I}\equiv0$,
\begin{align*}
\ell^{+,I}*\rho*\ell^{-,I}&=
\ell^{+,I}*(\rho_0+\rho_1I+\rho_2J+\rho_3K)*\ell^{-,I}
=\ell^{+,I}*(\rho_0+\rho_1I)*\ell^{-,I}+\ell^{+,I}*(\rho_2J+\rho_3K)*\ell^{-,I}\\
&=(\rho_0+\rho_1I)*\ell^{+,I}*\ell^{-,I}+\rho_2\ell^{+,I}*J*\ell^{-,I}+\rho_3\ell^{+,I}*K*\ell^{-,I}\\
&=\rho_2\ell^{+,I}*\ell^{+,I}*J+\rho_3\ell^{+,I}*\ell^{+,I}*K
=\rho_2\ell^{+,I}*J+\rho_3*\ell^{+,I}*K=\ell^{+,I}*(\rho_2+\rho_3I)*J.
\end{align*}
Thus $\ell^{+,I}*\rho*\ell^{-,I}\equiv0$ if and only if $\ell^{+,I}*(\rho_2+\rho_3I)\equiv0$ which, thanks to \textit{(1)}, gives the existence of a suitable $\beta\in\Sm_\R(\Omega)$ such that  $(\rho_2+\rho_3I)*J=\beta\ell^{-,I}* J$ and thus proves the first part of the assertion.
Again $\rho=f*(\alpha_{0}+\alpha_{1}I+\beta\ell^{-,I}*J)*f^{-*}$ is an idempotent if and only if $\alpha_{0}+\alpha_{1}I+\beta\ell^{-,I}*J$ is
and this is equivalent to $\alpha_{0}=\frac12$ and $\alpha_{1}^{2}=-\frac14$.

\textit{(3)}. The sufficiency of the condition is proved by direct inspection, as above. 

We only give a short summary of the computations, since the procedure is the same as in case \textit{(2)}

\begin{align*}
\ell^{+,I}*\rho*\ell^{+,I}&=
\ell^{+,I}*(\rho_0+\rho_1I+\rho_2J+\rho_3K)*\ell^{+,I}
=\ell^{+I}*(\rho_0+\rho_1I)*\ell^{+,I}+\ell^{+,I}*(\rho_2J+\rho_3K)*\ell^{+,I}\\
&=(\rho_0+\rho_1I)*\ell^{+,I}*\ell^{+,I}+\rho_2\ell^{+,I}*J*\ell^{+,I}+\rho_3\ell^{+,I}*K*\ell^{+,I}\\
&=(\rho_0+\rho_1I)*\ell^{+,I}+\rho_2\ell^{+,I}*\ell^{-,I}*J+\rho_3\ell^{+,I}*\ell^{-,I}*K
=(\rho_0+\rho_1I)*\ell^{+,I}.
\end{align*}
Thus $\ell^{+,I}*\rho*\ell^{+,I}\equiv0$ if and only if $(\rho_0+\rho_1I)*\ell^{+,I}\equiv0$ which is equivalent to $\rho_0+\rho_1I=\alpha \ell^{-I}$ for a suitable $\alpha\in\Sm_\R(\Omega)$.

\end{proof}

\begin{remark}
The above proposition classifies, up to conjugation, all functions $\sigma,\rho$ such that $\sigma$ is an idempotent and 
$\sigma*\rho\equiv0$ showing that, up to conjugation, $\sigma=\ell^{+,I}$ and $\rho=\ell^{-,I}*(\alpha+\beta J)$ with $I,J\in\SF$, $I\perp J$, $\alpha,\beta\in\Sm_R(\Omega)$. Notice that for these functions $\rho*\sigma$ can be different from $0$. Indeed, $\rho*\sigma\equiv0$ iff $\ell^{-,I}*(\alpha+\beta J)*\ell^{+,I}=\alpha\ell^{-,I}*\ell^{+,I}+\beta\ell^{-,I}*J*\ell^{+,I}\equiv0$. Since $\ell^{-,I}*\ell^{+,I}\equiv0$ we have $\rho*\sigma\equiv0$ if and only if $\beta\ell^{-,I}*J*\ell^{+,I}\equiv0$. As $J$ is orthogonal to $\ell^{-,I}$ we get $J*\ell^{+,I}=\ell^{-,I}*J$ and thus $\beta\ell^{-,I}*J*\ell^{+,I}\equiv0$ is equivalent to $\beta\ell^{-,I}*\ell^{-,I}*J=\beta\ell^{-,I}*J\equiv0$, since $\ell^{-,I}$ is an idempotent. Thus $\rho*\sigma\equiv 0$ iff $\beta\equiv0$, which is equivalent to $\rho=\alpha\ell^{-,I}$. Again, $\rho$ is an idempotent if and only if $\alpha=1$, that is $\rho=\sigma^c$.
\end{remark}

\section{Sylvester operators of rank $3$}\label{rank3sect}

We are now left to investigate more precisely the case when the Sylvester operator $\mathcal{S}_{f,g}$ has rank $3$. Thanks to Theorem~\ref{rank3}, this corresponds to the fact that $f_0+g_0\not\equiv0$ and  $\mathcal{S}_{f,g}$ is not an isomorphism. We recall that by Remark~\ref{noproduct} and Proposition~\ref{ranksimeq}, this can happen only if $\Omega$ is a product domain.
Since $f_0+g_0\in\Sm_\R(\Omega)\setminus\{0\}$ is invertible, with no loss of generality  we can study the kernel and the image of the operator associated to the functions $\frac{f}{f_0+g_0}$ and $\frac{g}{f_0+g_0}$, that is we can assume $f_0+g_0\equiv1$.

Next result gives necessary conditions on the functions $f$ and $g$ in order that  $S_{f,g}$ is not an isomorphism.

\begin{proposition}\label{squareroot}
Assume that $f_0+g_0\equiv 1$ and $S_{f,g}$ is not an isomorphism. Then there exists $\tau\in\Sm_\R(\Omega)$ such that $f_v^s=\left(\mathcal{J}\left(\tau-\frac12\right)\right)^2$ and $g_v^s=\left(\mathcal{J}\left(\tau+\frac12\right)\right)^2$; in particular both $f_v^s$ and $g_v^s$ have a square root in $\in\Sm_\R(\Omega)$.
\end{proposition}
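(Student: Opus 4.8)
The plan is to reduce the whole statement to a single scalar identity in the field $\Sm_{\R}(\Omega)$ and then produce $\tau$ by an explicit formula. First I would feed the hypothesis into Proposition~\ref{propdetsyl}: since $S_{f,g}$ is not an isomorphism, condition~\eqref{determinantsylvester} must fail, i.e.
\begin{equation*}
(f_0+g_0)^2[(f_0+g_0)^2+2(f_v^s+g_v^s)]+(f_v^s-g_v^s)^2\equiv 0,
\end{equation*}
and inserting $f_0+g_0\equiv 1$ this becomes
\begin{equation*}
1+2(f_v^s+g_v^s)+(f_v^s-g_v^s)^2\equiv 0.\tag{$\star$}
\end{equation*}
Here I would record that $f_v^s,g_v^s\in\Sm_{\R}(\Omega)$ (the symmetrized function of any element of $\Sm(\Omega)$ is slice preserving), so that every quantity appearing in~$(\star)$ lives in the commutative field $\Sm_{\R}(\Omega)$.

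Next I would introduce the candidate $\tau:=\tfrac12(f_v^s-g_v^s)\in\Sm_{\R}(\Omega)$. This choice is forced: if the two claimed identities hold, then subtracting them and using a difference of squares gives $f_v^s-g_v^s=(\tau+\tfrac12)^2-(\tau-\tfrac12)^2=2\tau$. Before checking the identities I would simplify their right-hand sides. Since $\mathcal{J}$ is slice preserving with $\mathcal{J}^2=-1$ and $\tau\pm\tfrac12\in\Sm_{\R}(\Omega)$ is central, $\mathcal{J}$ and $\tau\pm\tfrac12$ commute under the $*$-product, whence
\begin{equation*}
\Big(\mathcal{J}\big(\tau\mp\tfrac12\big)\Big)^2=\mathcal{J}^2\big(\tau\mp\tfrac12\big)^2=-\big(\tau\mp\tfrac12\big)^2 .
\end{equation*}
Thus the proposition is equivalent to the two scalar equalities $f_v^s=-(\tau-\tfrac12)^2$ and $g_v^s=-(\tau+\tfrac12)^2$ in $\Sm_{\R}(\Omega)$.

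It then remains to substitute $\tau=\tfrac12(f_v^s-g_v^s)$ and expand. For the first equality one has $-(\tau-\tfrac12)^2=-\tfrac14(f_v^s-g_v^s-1)^2$, so after clearing denominators the desired identity reads $4f_v^s+(f_v^s-g_v^s-1)^2\equiv 0$, which upon expanding rearranges term by term exactly into~$(\star)$; the second equality $g_v^s=-(\tau+\tfrac12)^2$ is governed, in the same way, by the identical constraint~$(\star)$. Hence both hold simultaneously, and $\mathcal{J}(\tau-\tfrac12)$, $\mathcal{J}(\tau+\tfrac12)\in\Sm_{\R}(\Omega)$ are the announced square roots of $f_v^s$ and $g_v^s$. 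There is no real obstacle beyond bookkeeping: the two delicate points are to note that $f_v^s,g_v^s$ (hence $\tau$) lie in the commutative field $\Sm_{\R}(\Omega)$, so that ordinary scalar algebra applies, and to use $\mathcal{J}^2=-1$ together with the centrality of $\Sm_{\R}(\Omega)$ in order to replace $(\mathcal{J}(\tau\mp\tfrac12))^2$ by $-(\tau\mp\tfrac12)^2$.
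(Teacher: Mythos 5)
Your proof is correct, and it rests on exactly the same input as the paper's: the vanishing of $\det S_{f,g}$ given by Proposition~\ref{propdetsyl}, which under $f_0+g_0\equiv 1$ becomes $1+2(f_v^s+g_v^s)+(f_v^s-g_v^s)^2\equiv 0$. Where you differ is in the middle of the argument. The paper does not posit $\tau$ in advance: it rewrites the vanishing determinant as $(f_v^s+g_v^s+1)^2-4f_v^sg_v^s\equiv 0$, deduces that $f_v^sg_v^s$ has a square root $\mu$ (namely $\pm\tfrac12(f_v^s+g_v^s+1)$), then completes the square in the resulting system to get $\left(f_v^s-\mu+\tfrac12\right)^2=-\mu+\tfrac14$, so that $-\mu+\tfrac14$ has a square root $\tau$, and finally expresses $f_v^s=-\left(\tau-\tfrac12\right)^2$ and $g_v^s=-\left(\tau+\tfrac12\right)^2$. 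Unwinding the paper's formulas, its $\tau$ is precisely your $\tfrac12(f_v^s-g_v^s)$, so the two proofs construct the same function; yours pins it down at the outset (legitimately motivated by the necessity computation) and then verifies both identities by expansion, each reducing to the determinant identity. What your route buys is brevity and the elimination of the paper's two ``up to a change of sign'' adjustments, which your explicit choice makes unnecessary; what the paper's route buys is a derivation exhibiting how $\tau$ is forced into existence by successive square-root extractions rather than checked a posteriori. One caveat you share with the paper (which disposes of it in the section preamble rather than in the proof): the statement implicitly requires $\mathcal{J}$ to be defined on all of $\Omega$, i.e.\ $\Omega$ a product domain; this is automatic here because a non-invertible $\mathcal{S}_{f,g}$ with $f_0+g_0\not\equiv 0$ forces zero divisors in the kernel, which exist only on product domains.
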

\begin{proof}
Under the assumption on $f_0+g_0$, the determinant of $S_{f,g}$ becomes $1+2(f_v^s+g_v^s)+(f_v^s-g_v^s)^2$
 which can also be written as $(f_v^s+g_v^s+1)^2-4f_v^sg_v^s$.

As $\mathcal{S}_{f,g}$ is not an isomorphism, then we have $(f_v^s+g_v^s+1)^2-4f_v^sg_v^s\equiv0$, which implies that $f_v^sg_v^s$ has a square root $\mu\in\Sm_\R(\Omega)$.

Up to a possible change of sign of $\mu$ we have that the following system holds
\begin{equation*}
\begin{cases}
f_v^s+g_v^s+1=2\mu,\\
f_v^sg_v^s=\mu^2
\end{cases}
\end{equation*}
The first equality gives $g_v^s=2\mu-1-f_v^s$, and thanks the second one, we obtain
$f_v^s(2\mu-1-f_v^s)=\mu^2$. Last equation is equivalent to 
$(f_v^s)^2-2(\mu-\frac12)f_v^s+\mu^2-\mu+\frac14\equiv-\mu+\frac14$
which can also be written as 
$\left(f_v^s-\mu+\frac12\right)^2=-\mu+\frac14$, thus showing that $-\mu+\frac14$ has a square root $\tau\in\Sm_\R(\Omega)$. Up to a change of sign of $\tau$, it holds  $f_v^s-\mu+\frac12=\tau$, that is $f_v^s=\mu-\frac12+\tau$. As $\mu-\frac12=-\tau^2-\frac14$ we finally obtain that 
$$
f_v^s=-\tau^2-\frac14+\tau=-\left(\tau-\frac12\right)^2=\left(\mathcal{J}\left(\tau-\frac12\right)\right)^2
$$ 
which therefore proves that $f_v^s$ has a square root. Since $g_v^s=2\mu-1-f_v^s$, we have 
$$
g_v^s=2\left(-\tau^2-\frac14\right)+\left(\tau-\frac12\right)^2=-\tau^2-\frac14-\tau=-\left(\tau+\frac12\right)^2=\left(\mathcal{J}\left(\tau+\frac12\right)\right)^2,
$$
 showing that $g_v^s$ has also the required form and admits a square root.
\end{proof}

Last proposition gives us the possibility to study more accurately which are the functions $f$ and $g$ such that $f_0+g_0\equiv1$ and  $\mathcal{S}_{f,g}$ is not invertible. The crucial point is that this analysis must be split in two parts, corresponding to~Examples~\ref{rank3ex1} and ~\ref{rank3ex2}: indeed the main difference we will find is that in the first case $f_v^sg_v^s\not\equiv0$, thus ensuring that the eigenvalue $0$ has algebraic multiplicity $1$, while in the second one $f_v^sg_v^s\equiv0$, which entails that the eigenvalue $0$ has algebraic multiplicity greater than $1$.

\begin{proposition}\label{algmult1}
Assume that $f_0+g_0\equiv 1$ and $S_{f,g}$ is not an isomorphism. If $f_v^sg_v^s\not\equiv0$, then there exist $h,\tilde h\in\Sm(\Omega)$ 
invertible, $\tau\in\Sm_\R(\Omega)\setminus\left\{\pm\frac12\right\}$, $I\in\SF$ such that $h^{-*}*f_v*h=\mathcal{J}\left(\tau-\frac12\right)I$ 
and $\tilde h*g_v*\tilde h^{-*}=\mathcal{J}\left(\tau+\frac12\right)I$. Moreover for any $J\in\SF$ such that $I\perp J$ and $K=IJ$ we have
$$
\ker(\mathcal{S}_{f,g})=\left\{\alpha h*(\mathcal{J}J+K)*\tilde h\,|\, \alpha\in \Sm_\R(\Omega)\right\}
$$
and 
$\mathcal{S}_{f,g}=\mathfrak{b}$ has a solution if and only if 
$ \langle h^{-*}*\mathfrak{b}*\tilde h^{-*},\mathcal{J}J-K\rangle_*\equiv0$.
\end{proposition}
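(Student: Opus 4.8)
The plan is to diagonalize $f_v$ and $g_v$ simultaneously by $*$-conjugation and then read off the kernel and image from a block-diagonal coordinate matrix over the field $\Sm_{\R}(\Omega)$. First I would record the input from Proposition~\ref{squareroot}: since $S_{f,g}$ is singular and $f_0+g_0\equiv1$, there is $\tau\in\Sm_{\R}(\Omega)$ with $f_v^s=(\mathcal{J}(\tau-\frac12))^2$ and $g_v^s=(\mathcal{J}(\tau+\frac12))^2$, and the extra hypothesis $f_v^sg_v^s\not\equiv0$ forces $\tau-\frac12\not\equiv0$ and $\tau+\frac12\not\equiv0$, i.e. $\tau\neq\pm\frac12$. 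Because $f_v^s$ and $g_v^s$ admit square roots, Proposition~\ref{idemconjug} together with Corollary~\ref{sufficiencyproduct} guarantees that $f_v$ and $g_v$ are each equivalent to a one-slice preserving function. The crucial point is that the target imaginary unit is \emph{free}: for any $I\in\SF$ the pure-vector function $\mathcal{J}(\tau\mp\frac12)I$ has vanishing ``real part'' and symmetrized norm $(\mathcal{J}(\tau\mp\frac12))^2$, so by Corollary~\ref{sufficiencyproduct} both $f_v$ and $g_v$ can be conjugated along one and the same $I$. This yields invertible $h,\tilde h$ with $h^{-*}*f_v*h=\mathcal{J}(\tau-\frac12)I$ and $\tilde h*g_v*\tilde h^{-*}=\mathcal{J}(\tau+\frac12)I$.

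Next I would transport the whole operator through this conjugation. Writing $\chi=h*\psi*\tilde h$ and using $f_v*h=h*(h^{-*}*f_v*h)$ and $\tilde h*g_v=(\tilde h*g_v*\tilde h^{-*})*\tilde h$ together with $f_0+g_0\equiv1$, I obtain
\[
\mathcal{S}_{f,g}(h*\psi*\tilde h)=h*\big(\psi+a*\psi+\psi*b\big)*\tilde h,\qquad a=\mathcal{J}(\tau-\tfrac12)I,\ \ b=\mathcal{J}(\tau+\tfrac12)I.
\]
Since $h,\tilde h$ are invertible, setting $T(\psi):=\psi+a*\psi+\psi*b$ gives $\ker(\mathcal{S}_{f,g})=h*\ker(T)*\tilde h$, while $\mathcal{S}_{f,g}(\chi)=\mathfrak{b}$ is solvable if and only if $T(\psi)=h^{-*}*\mathfrak{b}*\tilde h^{-*}$ is.

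Then I would compute $T$ in the basis $(1,I,J,K)$. Exploiting that $\mathcal{J}$ and $\tau\mp\frac12$ are central and using the multiplication table of $I,J,K$, the matrix of $T$ over $\Sm_{\R}(\Omega)$ splits into two $2\times2$ blocks: the $(1,I)$-block has determinant $1-4\tau^2$, which is nonzero precisely because $\tau\neq\pm\frac12$ (this is where $f_v^sg_v^s\not\equiv0$ enters), hence it is invertible; the $(J,K)$-block is singular, its determinant being $1+\mathcal{J}^2\equiv0$, and has rank $1$. This gives $\mathrm{rk}(T)=2+1=3$, consistent with Theorem~\ref{rank3}. Solving the linear system over the field $\Sm_{\R}(\Omega)$ shows $\ker(T)$ is one-dimensional, spanned by $\mathcal{J}J+K$, while the image of $T$ is the codimension-one subspace cut out by the single orthogonality relation $\langle\,\cdot\,,\mathcal{J}J-K\rangle_*\equiv0$, the annihilator (under the euclidean pairing $\langle\cdot,\cdot\rangle_*$) of the cokernel of the $(J,K)$-block. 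Pulling back through the conjugation then yields $\ker(\mathcal{S}_{f,g})=\{\alpha\,h*(\mathcal{J}J+K)*\tilde h\mid\alpha\in\Sm_{\R}(\Omega)\}$ and the solvability criterion $\langle h^{-*}*\mathfrak{b}*\tilde h^{-*},\mathcal{J}J-K\rangle_*\equiv0$.

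The main obstacle lies in the passage to the simultaneous diagonalization: arranging a \emph{common} imaginary unit $I$ for both $f_v$ and $g_v$, which is possible only because equivalence to a one-slice preserving function constrains solely the ``real part'' and the symmetrized norm and leaves $I$ free. The second delicate point is, in the coordinate computation, to track the noncommutative left and right actions of $a$ and $b$ carefully enough that the $(J,K)$-block comes out singular; this singularity is the algebraic shadow of the fact that $f_v^s$ and $g_v^s$ are the ``adjacent'' squares $\mathcal{J}(\tau\mp\frac12)^{2}$, whose arguments differ by $1$ while $\mathcal{J}^2=-1$, so that the off-diagonal entries cancel the identity exactly in that block and only there.
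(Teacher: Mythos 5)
Your proposal follows essentially the same route as the paper's proof: reduce, via Proposition~\ref{squareroot} and Proposition~\ref{idemconjug} (that is, Corollary~\ref{sufficiencyproduct}), to one-slice preserving representatives along a common axis $I$, then read kernel and image off the block-diagonal coordinate matrix over $\Sm_{\R}(\Omega)$. Your explicit transport identity $\mathcal{S}_{f,g}(h*\psi*\tilde h)=h*(\psi+a*\psi+\psi*b)*\tilde h$ is a welcome sharpening of what the paper dismisses as ``a straightforward computation'', and your remark that the target unit $I$ is free (because $\simeq$ constrains only the ``real part'' and the symmetrized function) is exactly what makes a common $I$ available.

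One concrete bookkeeping warning, though. With the normalization you adopt (the one in the statement), $a=\mathcal{J}\left(\tau-\frac12\right)I$ and $b=\mathcal{J}\left(\tau+\frac12\right)I$, so the $I$-components satisfy $a_{1}-b_{1}=-\mathcal{J}$ and the $(J,K)$-block of $T$ is $\left(\begin{smallmatrix}1&\mathcal{J}\\-\mathcal{J}&1\end{smallmatrix}\right)$: its kernel is spanned by $\mathcal{J}J-K$ and its image is cut out by $\langle\cdot,\mathcal{J}J+K\rangle_*\equiv0$, i.e.\ the two generators come out \emph{swapped} with respect to what you assert (and $\mbox{Span}_{\Sm_{\R}(\Omega)}(\mathcal{J}J+K)\neq\mbox{Span}_{\Sm_{\R}(\Omega)}(\mathcal{J}J-K)$, so this is not a harmless rescaling). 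The formulas you claim are the ones valid for the opposite assignment $f_v\mapsto\mathcal{J}\left(\tau+\frac12\right)I$, $g_v\mapsto\mathcal{J}\left(\tau-\frac12\right)I$ --- which is precisely the normalization the paper's own proof silently switches to after its first sentence (its displayed matrix has $f_{1}-g_{1}=\mathcal{J}$), in conflict with the proposition's statement. The two conventions are exchanged by the relabeling $(\tau,I)\mapsto(-\tau,-I)$, under which $\mathcal{J}J+K$ and $\mathcal{J}J-K$ trade places once one sets $K=IJ$; so nothing essential is at stake and your architecture is sound, but carried out literally your computation would not produce the generators you state. In a write-up, fix one assignment of $\tau\mp\frac12$ to $f_v,g_v$ and make the block computation match it.
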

\begin{proof}
Thanks to Propositions~\ref{squareroot} and~\ref{idemconjug} we can find $h,\tilde h\in\Sm(\Omega)$ invertible, $\tau\in\Sm_\R(\Omega)\setminus\left\{\pm\frac12\right\}$, $I\in\SF$ such that $h^{-*}*f_v*h=\mathcal{J}\left(\tau-\frac12\right)I$ and $\tilde h*g_v*\tilde h^{-*}=\mathcal{J}\left(\tau+\frac12\right)I$. Thus, by a straightforward computation, it is enough to study the Sylvester operator $\mathcal{S}_{f,g}$ when $f_v=\mathcal{J}\left(\tau+\frac12\right)I$ and $g_v=\mathcal{J}\left(\tau-\frac12\right)I$ and to recover kernel and image in the general case from the kernel and the image associated to these specific functions.
The matrix $S_{f,g}$ in Formula~\ref{matrixiLR} is given by
$$
\begin{vmatrix}
1& -2\mathcal{J}\tau &0&0\\
2\mathcal{J}\tau &1&0&0\\
0&0&1&-\mathcal{J}\\
0&0&\mathcal{J}&1
\end{vmatrix}.
$$
As $\tau^2\neq \frac14$ we easily obtain that 
$\ker(\mathcal{S}_{f,g})$ is spanned by $\mathcal{J}J+K$ and that the image of $\mathcal{S}_{f,g}$ is spanned by $1$, $I$ and $J+\mathcal{J}K$. Last assertion can also be rephrased by saying that 
$\mathfrak{b}$ belongs to the image of $\mathcal{S}_{f,g}$ if and only if 
$ \langle \mathfrak{b},\mathcal{J}J-K\rangle_*\equiv0$.
\end{proof}


We recall that, thanks to Remark~\ref{rank3symmetric}, $\mathcal{S}_{f,g}$ has rank $3$ if and only if $\mathcal{S}_{g,f}$ has rank $3$. By Theorem~\ref{kerthm}, this condition is equivalent to the fact that $\ker(\mathcal{S}_{f,g})$ contains only zero divisors 
(indeed the existence of a non-zero element in $\ker(\mathcal{S}_{f,g})$ rules out the case $\text{rk}(\mathcal{S}_{f,g})=4$ and the absence of
invertible elements in $\ker(\mathcal{S}_{f,g})$ prevents $\text{rk}(\mathcal{S}_{f,g})=2$). Under this hypothesis, notice that, if there exists a zero divisor in $\ker(\mathcal{S}_{f,g})$ whose ``real part'' is not identically zero, then $\ker(\mathcal{S}_{f,g})$ contains exactly one idempotent. Quite surprisingly, this property is not symmetric in $f$ and $g$: in particular, we can find $f,g\in\Sm(\Omega)$ such that $f_{0}+g_{0}=1$ and $\ker(\mathcal{S}_{f,g})$ contains 
an idempotent while $\ker(\mathcal{S}_{g,f})$ only contains zero divisor with ``real part'' equal to zero.

With the same notation as in the statement of Proposition~\ref{algmult1}, we have that
$$
\ker(\mathcal{S}_{g,f})=\left\{\alpha \tilde h^{-*}*(\mathcal{J}J-K)* h^{-*}\,|\, \alpha\in \Sm_\R(\Omega)\right\}=\left\{\alpha \tilde h^{c}*(\mathcal{J}J-K)* h^{c}\,|\, \alpha\in \Sm_\R(\Omega)\right\}.
$$
Let us compute the ``real part'' of the elements of $\ker(\mathcal{S}_{f,g})$ and  $\ker(\mathcal{S}_{g,f})$.
Factoring out the slice preserving function $\alpha$ we have,
\begin{align}
(h*(\mathcal{J}J+K)*\tilde h)_{0}&=((h_{0}+h_{v})*(\mathcal{J}J+K)*(\tilde h_{0}+\tilde h_{v}))_{0}\nonumber\\
&=\left(\left(-\langle h_{v},\mathcal{J}J+K\rangle_{*}+h_{0}(\mathcal{J}J+K)+h_{v}\pv(\mathcal{J}J+K)\right)*(\tilde h_{0}+\tilde h_{v})\right)_{0}\nonumber\\
&=-\tilde h_{0}\langle h_{v},\mathcal{J}J+K\rangle_{*}-\langle h_{0}(\mathcal{J}J+K)+h_{v}\pv(\mathcal{J}J+K),\tilde h_{v}\rangle_{*}\nonumber\\
&=-\tilde h_{0}\langle h_{v},\mathcal{J}J+K\rangle_{*}-h_{0}\langle \mathcal{J}J+K,\tilde h_{v}\rangle_{*}-\det\begin{vmatrix}
 h_{v} & (\mathcal{J}J+K) &\tilde h_{v}
\end{vmatrix}.\label{eqpartereale1}
\end{align}
Analogously we have 
\begin{equation}
\label{eqpartereale2}
(\tilde h^{c}*(\mathcal{J}J-K)* h^{c})_{0}=
\tilde h_{0}\langle h_{v},\mathcal{J}J-K\rangle_{*}+h_{0}\langle \mathcal{J}J-K,\tilde h_{v}\rangle_{*}-\det\begin{vmatrix}
 \tilde h_{v} & (\mathcal{J}J-K) & h_{v}\end{vmatrix}.
\end{equation}


\begin{example}
Take $h=(\mathcal{J}-1)+i+j$ and $\tilde{h}=i+k$. Then $h_{0}=(\mathcal{J}-1)$, $h_{v}=i+j$, $\tilde h_{0}=0$, $\tilde h_{v}=i+k$, $h^{s}=(\mathcal{J}-1)^{2}+1+1=2-2\mathcal{J}$ and $\tilde h^{s}=2$. Then equation~\eqref{eqpartereale1} gives
$$(h*(\mathcal{J}j+k)*\tilde h)_{0}=-(\mathcal{J}-1)\cdot 1-(\mathcal{J}+1)=-2\mathcal{J},$$
while equation~\eqref{eqpartereale2} gives
$$
(\tilde h^{c}*(\mathcal{J}j-k)* h^{c})_{0}=(\mathcal{J}-1)\cdot (-1)-(-\mathcal{J}+1)=0.
$$
Thus for any $\tau\in\Sm_{\R}(\Omega)$, given $f=1+h*(\mathcal{J}\left(\tau-\frac12\right)i)*h^{-*}$ and $g=\tilde h^{-*}*(\mathcal{J}\left(\tau+\frac12\right)i)*\tilde h$, we have that $\ker(\mathcal{S}_{g,f})$ contains only zero divisors with vanishing ``real part'',
while $\ker(\mathcal{S}_{f,g})$ contains an idempotent.
\end{example}

We are now left to deal with the condition $f_v^sg_v^s\equiv0$. We will examine thoroughly the case $g_v^s\equiv 0$, while the symmetrical one $f_v^s\equiv 0$ is left to the reader.

\begin{proposition}
Assume that $f_0+g_0\equiv 1$ and $S_{f,g}$ is not an isomorphism. If $g_v^s\equiv0$, then 
\begin{equation}\label{kerrank3-case2}
\ker(\mathcal{S}_{f,g})=\left\{(1-f_v)*X*g_v\,|\, X\in \Sm(\Omega)\right\} 
\end{equation}
and $\mathcal{S}_{f,g}=\mathfrak{b}$ has a solution if and only if $(1-f_v)*\mathfrak{b}*g_v\equiv0$.
\end{proposition}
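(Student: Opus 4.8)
The plan is to exploit the two hypotheses to collapse the whole problem onto a pair of complementary idempotents built from $f_{v}$. First I would read off the determinant: with $f_{0}+g_{0}\equiv1$ the expression $\det(S_{f,g})$ from Proposition~\ref{propdetsyl} becomes $(f_{v}^{s}+g_{v}^{s}+1)^{2}-4f_{v}^{s}g_{v}^{s}$, and under $g_{v}^{s}\equiv0$ this collapses to $(f_{v}^{s}+1)^{2}$; since $S_{f,g}$ is not an isomorphism this forces $f_{v}^{s}\equiv-1$, that is $f_{v}*f_{v}=-f_{v}^{s}=1$. Consequently $p_{\pm}:=\frac{1\pm f_{v}}{2}$ are complementary orthogonal idempotents: $p_{+}+p_{-}=1$ and $p_{+}*p_{-}=p_{-}*p_{+}=\frac{1+f_{v}^{s}}{4}=0$, while $g_{v}*g_{v}=-g_{v}^{s}=0$. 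Using $f_{0}+g_{0}\equiv1$ I would then rewrite the operator in the symmetric form $\mathcal{S}_{f,g}(\chi)=(1+f_{v})*\chi+\chi*g_{v}=2p_{+}*\chi+\chi*g_{v}$, which is the identity that drives every later computation.

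The easy inclusion is immediate from this form: for $\chi=(1-f_{v})*X*g_{v}=2p_{-}*X*g_{v}$ one has $2p_{+}*\chi=4(p_{+}*p_{-})*X*g_{v}=0$ and $\chi*g_{v}=2p_{-}*X*(g_{v}*g_{v})=0$, so $\mathcal{S}_{f,g}(\chi)\equiv0$ and the right-hand side of~\eqref{kerrank3-case2} lies in $\ker(\mathcal{S}_{f,g})$. By Theorem~\ref{rank3} the operator has rank $3$, so $\ker(\mathcal{S}_{f,g})$ is one-dimensional over $\Sm_{\R}(\Omega)$; hence the equality~\eqref{kerrank3-case2} will follow as soon as I show that the subspace $\{(1-f_{v})*X*g_{v}\mid X\in\Sm(\Omega)\}$, i.e. the image of $\mathcal{L}_{p_{-},g_{v}}$, is not zero.

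This nonvanishing is the heart of the matter and the step I expect to be the main obstacle. I would argue by contradiction: if $p_{-}*X*g_{v}\equiv0$ for every $X$, then by the matrix dictionary of Section~\ref{representation} (formula~\eqref{propiLiR}) this reads $\imath_{L}(p_{-})\,\imath_{R}(g_{v})=0$, equivalently $\Sm(\Omega)*g_{v}\subseteq\{X\mid p_{-}*X\equiv0\}=p_{+}*\Sm(\Omega)$. Right $*$-multiplication by the nonzero zero divisor $g_{v}$ and left $*$-multiplication by the nonzero idempotent $p_{-}$ are each of rank $2$, so both subspaces are two-dimensional and the inclusion is an equality $\Sm(\Omega)*g_{v}=p_{+}*\Sm(\Omega)$. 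But the left member is a left ideal and the right member a right ideal, so their common value is a two-sided ideal; in particular $p_{+}*\Sm(\Omega)$ would be a left ideal, which forces $p_{-}*W*p_{+}\equiv0$ for all $W$. To contradict this I would use Proposition~\ref{idemconjug} to write $p_{+}=a*\ell^{+,I}*a^{-*}$ for an invertible $a$ and some $I\in\SF$, whence $p_{-}=a*\ell^{-,I}*a^{-*}$, reducing the claim to $\ell^{-,I}*\Sm(\Omega)*\ell^{+,I}\neq0$; choosing $J\in\SF$ with $J\perp I$ and using $J*\ell^{+,I}=\ell^{-,I}*J$ one gets $\ell^{-,I}*J*\ell^{+,I}=\ell^{-,I}*\ell^{-,I}*J=\ell^{-,I}*J\neq0$, the desired contradiction. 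Thus $\mathcal{L}_{p_{-},g_{v}}$ is nonzero, its image equals the one-dimensional $\ker(\mathcal{S}_{f,g})$, and~\eqref{kerrank3-case2} is proved; in particular $\mathcal{L}_{p_{-},g_{v}}$ has rank $1$.

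Finally, for the solvability statement I would check necessity by a direct computation: if $\mathfrak{b}=\mathcal{S}_{f,g}(\chi)=2p_{+}*\chi+\chi*g_{v}$, then $(1-f_{v})*\mathfrak{b}*g_{v}=2p_{-}*(2p_{+}*\chi+\chi*g_{v})*g_{v}=0$, again by $p_{-}*p_{+}=0$ and $g_{v}*g_{v}=0$. Hence $\mathrm{im}(\mathcal{S}_{f,g})$, which is three-dimensional, is contained in $\{\mathfrak{b}\mid(1-f_{v})*\mathfrak{b}*g_{v}\equiv0\}=\ker(\mathcal{L}_{p_{-},g_{v}})$. Since the previous paragraph gives $\mathrm{rk}(\mathcal{L}_{p_{-},g_{v}})=1$, this kernel is also three-dimensional, so the two subspaces coincide and $\mathcal{S}_{f,g}(\chi)=\mathfrak{b}$ is solvable precisely when $(1-f_{v})*\mathfrak{b}*g_{v}\equiv0$.
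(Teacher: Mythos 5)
Your proof is correct, and its skeleton matches the paper's: both arguments reduce to $f_v^s\equiv-1$ (you read it off the determinant $(1+f_v^s)^2$, the paper routes it through its Proposition~\ref{squareroot}), both rewrite $\mathcal{S}_{f,g}(\chi)=(1+f_v)*\chi+\chi*g_v$ and verify the two easy inclusions by the same computations, and both finish with dimension counts based on $\mathrm{rk}(\mathcal{S}_{f,g})=3$. The two hard steps, however, are handled genuinely differently. For the nonvanishing of $\left\{(1-f_v)*X*g_v\right\}$ the paper is constructive: it conjugates $1\pm f_v$ and $g_v*I$ to ``real'' multiples of the standard idempotents $\ell^{\pm,I}$ (via Propositions~\ref{squareroot} and~\ref{idemconjug}) and then exhibits an explicit $X$ making the product a nonzero idempotent; you instead argue by contradiction, observing that the hypothetical vanishing would force the equality $\Sm(\Omega)*g_v=p_+*\Sm(\Omega)$ of a $2$-dimensional left ideal with a $2$-dimensional right ideal, hence produce a two-sided ideal, contradicted by $\ell^{-,I}*J*\ell^{+,I}=\ell^{-,I}*J\not\equiv0$. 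Your route is valid, but it leans on the asserted-but-unproved facts that left $*$-multiplication by the idempotent $p_-$ and right $*$-multiplication by $g_v$ each have rank $2$; these are true and easy (they follow from Theorem~\ref{ker-dimension}~\textit{(1)}, or, for $g_v$, from the skew-symmetry of $\imath_R(g_v)$ together with $\det\imath_R(g_v)=(g_v^s)^2\equiv0$), but they should be stated, and since both proofs ultimately rest on the same computation with $\ell^{\pm,I}$ and an orthogonal $J$, the paper's direct exhibition of $X$ is shorter. Where your argument genuinely improves on the paper is the solvability criterion: the paper must invoke Theorem~\ref{ker-dimension} a second time to see that $\left\{\mathfrak{b}\,|\,(1-f_v)*\mathfrak{b}*g_v\equiv0\right\}$ is $3$-dimensional, whereas you get this for free by rank--nullity, having just identified the image of $\mathcal{L}_{p_-,g_v}$ with the $1$-dimensional $\ker(\mathcal{S}_{f,g})$; this makes the second half of the proof self-contained and exposes the kernel/image duality more transparently.
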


\begin{proof} 
Thanks to Propositions~\ref{squareroot} and~\ref{idemconjug} we can find $h\in\Sm(\Omega)$ invertible and $I\in\SF$ such that $h^{-*}*f_v*h=-\mathcal{J}I$, and hence $1+f_v$ is a zero divisor. 
Moreover, since $\mathcal{S}_{f,g}(\chi)=(f_0+g_0)\chi+f_v*\chi+\chi*g_v=1\cdot\chi+f_v*\chi+\chi*g_v=(1+f_v)*\chi+\chi*g_v$, a trivial computation shows that for any $X\in \Sm(\Omega)$ the following chain of equality holds
\begin{align*}
\mathcal{S}_{f,g}((1-f_v)*X*g_v)&=(1+f_v)*(1-f_v)*X*g_v+(1-f_v)*X*g_v*g_v\\
&=((1+f_v)*(1-f_v))*X*g_v+(1-f_v)*X*(g_v*g_v)\\
&=(1+f_v)^s*X*g_v+(1-f_v)*X*(-g_v^s)=0,
\end{align*} 
and therefore $(1-f_v)*X*g_v\in\ker(\mathcal{S}_{f,g})$ for any $X\in \Sm(\Omega)$. 

We now claim that there exist $X\in \Sm(\Omega)$ such that
 $(1-f_v)*X*g_v$ is not identically zero. Indeed, since $g_v\not\equiv0$ we can find $I\in\SF$ such that $g_v*I$ has non-zero real part, so there exists $\tilde h\in\Sm(\Omega)$ invertible that $\tilde h^{-*}*g_v*I*\tilde h$ is a non-zero ``real'' multiple of $1-\mathcal{J}I$. Moreover we already know that there exists $h\in\Sm(\Omega)$ invertible such that $h^{-*}*(1+f_v)*h=1-\mathcal{J}I$.
Thus $(1-f_v)*X*g_v\not\equiv0$ if and only if $h^{-*}*(1-f_v)*X*(g_v*I)*\tilde h\not\equiv0$, so that last inequality is equivalent to 
$(1-\mathcal{J}I)*h^{-*}X*\tilde h*(1-\mathcal{J}I)\not\equiv0$. Now, up to a factor $4$, we have $\sigma*h^{-*}X*\tilde h*\sigma\not\equiv0$ for the idempotent $\sigma=\frac12(1-\mathcal{J}I)$ and taking $X=h*\sigma*\tilde h^{-*}$
gives $\sigma*\sigma*\sigma=\sigma\not\equiv0$.

As $\ker(\mathcal{S}_{f,g})$ has dimension $1$ and $(1-f_v)*X*g_v$ is different from zero for some $X\in\Sm(\Omega)$, the equality in Formula~\ref{kerrank3-case2} is established.

We are now left to consider the image of the operator $\mathcal{S}_{f,g}$. First of all notice that, given $\chi\in\Sm(\Omega)$ we have that 
\begin{align*}
(1-f_v)*\mathcal{S}_{f,g}(\chi)*g_v
&=(1-f_v)*((1+f_v)*\chi+\chi*g_v)*g_v\\
&=(1-f_v)*(1+f_v)*\chi*g_v+(1-f_v)*\chi*g_v*g_v\\
&=(1+f_v)^s\chi*g_v+(1-f_v)*\chi*(g_v^s)=0,
\end{align*}
because both $1+f_v$ and $g_v$ are zero divisors. 
Thus if $\mathcal{S}_{f,g}(\chi)=\mathfrak{b}$ has a solution then  $(1-f_v)*\mathfrak{b}*g_v\equiv0$, showing that the image of $\mathcal{S}_{f,g}$ is contained in the linear subspace $\left\{\mathfrak{b}\in\Sm(\Omega)\,|\, (1-f_v)*\mathfrak{b}*g_v\equiv0\right\}$.

Reasoning as before, Theorem~\ref{ker-dimension} ensures that the dimension of $\left\{\mathfrak{b}\in\Sm(\Omega)\,|\, (1-f_v)*\mathfrak{b}*g_v\equiv0\right\}$ is equal to $3$, and hence the image of $S_{f,g}$ coincides with $\left\{\mathfrak{b}\in\Sm(\Omega)\,|\, (1-f_v)*\mathfrak{b}*g_v\equiv0\right\}$, thus completing the proof of the statement. 
\end{proof}


\bibliographystyle{amsplain}

\end{document}